\theoremstyle{plain}
\newtheorem{lemma}{Lemma}[section]
\newtheorem*{theorem*}{Theorem}
\newtheorem*{lemma*}{Lemma}
\newtheorem*{proposition*}{Proposition}
\newtheorem*{conjecture*}{Conjecture}
\newtheorem*{corollary*}{Corollary}
\newtheorem*{problem*}{Problem}
\newtheorem{theorem}[lemma]{Theorem}
\newtheorem{corollary}[lemma]{Corollary}
\newtheorem{proposition}[lemma]{Proposition}
\theoremstyle{definition}
\newtheorem{definition}[lemma]{Definition}
\newtheorem{remark}[lemma]{Remark}
\newcommand{\Z}{\mathbb{Z}}
\newcommand{\N}{\mathbb{N}}
\newcommand{\C}{\mathbb{C}}
\newcommand{\Q}{\mathbb{Q}}
\newcommand{\R}{\mathbb{R}}
\newcommand{\OO}{\mathcal{O}}
\newcommand{\te}{\otimes}
\newcommand{\cF}{\mathcal F}
\newcommand{\cA}{\mathcal A}
\newcommand{\cO}{\mathcal{O}}
\newcommand{\cP}{\mathcal P}
\newcommand{\cT}{\mathcal{T}}
\newcommand{\cE}{\mathcal{E}}
\renewcommand{\cD}{\mathcal{D}}
\newcommand{\rH}{\mbox{H}}
\newcommand{\ZZ}{\mathbb{Z}}
\newcommand{\leqpar}{\underset{{\scriptscriptstyle (}-{\scriptscriptstyle )}}{<}}
\newcommand{\geqpar}{\underset{{\scriptscriptstyle (}-{\scriptscriptstyle )}}{>}}
\renewcommand{\P}{\mathbb{P}}
\renewcommand{\cL}{\mathcal{L}}
\newcommand{\PP}{\mathbb{P}}
\DeclareMathOperator{\ch}{ch}
\DeclareMathOperator{\Aut}{Aut}
\DeclareMathOperator{\Hom}{Hom}
\DeclareMathOperator{\Pic}{Pic}
\DeclareMathOperator{\NS}{NS}
\DeclareMathOperator{\rk}{rk}
\DeclareMathOperator{\Ext}{Ext}
\DeclareMathOperator{\ext}{ext}
\DeclareMathOperator{\Coh}{Coh}
\DeclareMathOperator{\Stab}{Stab}
\DeclareMathOperator{\Amp}{Amp}
\DeclareMathOperator{\Nef}{Nef}
\DeclareMathOperator{\num}{num}
\begin{document}

\date{\today}
\author[I. Coskun]{Izzet Coskun}
\address{Department of Mathematics, Statistics and CS \\University of Illinois at Chicago, Chicago, IL 60607}
\email{coskun@math.uic.edu}
\author[J. Huizenga]{Jack Huizenga}
\address{Department of Mathematics, The Pennsylvania State University, University Park, PA 16802}
\email{huizenga@psu.edu}
\subjclass[2010]{Primary: 14J60. Secondary: 14E30, 14J29, 14C05}
\keywords{Moduli spaces of sheaves, ample cone, Bridgeland stability}
\thanks{During the preparation of this article the first author was partially supported by the NSF CAREER grant DMS-0950951535 and NSF grant DMS-1500031, and the second author was partially supported by a National Science Foundation Mathematical Sciences Postdoctoral Research Fellowship}

\title[The nef cone of the moduli space of sheaves]{The nef cone of the moduli space of sheaves and strong Bogomolov inequalities}

\begin{abstract}
Let $(X,H)$ be a polarized, smooth, complex projective surface, and let ${\bf v}$ be a Chern character on $X$ with positive rank and sufficiently large discriminant. In this paper, we compute the Gieseker wall for ${\bf v}$ in a slice of the stability manifold of $X$. We construct explicit curves parameterizing non-isomorphic  Gieseker stable sheaves of character ${\bf v}$ that become $S$-equivalent along the wall. As a corollary, we conclude that if there are no strictly semistable sheaves of character ${\bf v}$, the Bayer-Macr\`{i} divisor associated to the wall is a boundary nef divisor on the moduli space of sheaves $M_H({\bf v})$.  We recover previous results for $\P^2$ and $K3$ surfaces, and illustrate applications to higher Picard rank surfaces with an example on $\P^1\times \P^1$.
\end{abstract}
\maketitle

\setcounter{tocdepth}{1}
\tableofcontents

\section{Introduction}
Let $(X,H)$ be a polarized, smooth, complex projective surface. Let $D$ be a $\Q$-divisor on $X$ and let ${\bf v} \in K_{\num}(X)$ be the class of a stable sheaf with positive rank and sufficiently large discriminant. The divisors $H$ and $D$ determine a half-plane in the Bridgeland stability manifold $\Stab(X)$ called the $(H,D)$-slice (see \S \ref{ssec-Bridgeland} for the precise definition). Let $M_{H,D}({\bf v})$ denote the moduli space parameterizing $(H,D)$-twisted Gieseker semistable sheaves with Chern character ${\bf v}$. The large volume limit of the Bridgeland moduli spaces in the $(H,D)$-slice is  $M_{H,D}({\bf v})$. Let the  Gieseker wall $W$ be the largest Bridgeland wall in the slice along which an $(H,D)$-semistable sheaf with Chern character ${ \bf v}$ is destabilized.

In this paper, we compute the Gieseker wall $W$ for ${ \bf v}$ in the $(H,D)$-slice. We also construct an explicit curve parameterizing non-isomorphic $(H,D)$-twisted Gieseker stable sheaves that become $S$-equivalent for the Bridgeland stability conditions along $W$.  As a corollary, we conclude that if $M_{H,D}({\bf v})$ has no strictly semistable sheaves, then the Bayer-Macr\`{i} divisor constructed in \cite[Lemma 3.3]{BayerMacri2} is a nef divisor which lies in the boundary of $\Nef(M_{H,D}({\bf v}))$. When $\Pic(X) \cong \Z$, we show that the problem of computing Gieseker walls in $(H,D)$-slices for large discriminants is equivalent to the problem of classifying stable Chern characters.   Our computations recover previous results for $\P^2$ \cite{CoskunHuizenga2} and K3 surfaces \cite{BayerMacri2,BayerMacri3}.  We also explore new applications in the setting of $\P^1\times \P^1$, surfaces in $\P^3$ and double covers of $\P^2$.

Let $Y$ be a projective variety. The ample cone $\Amp(Y) \subset N^1(Y)$ is the open convex cone in the N\'{e}ron-Severi space spanned by the classes of ample divisors. It encodes  embeddings of $Y$ in projective space and is among the most important invariants of $Y$. The closure of $\Amp(Y)$ is the nef cone $\Nef(Y) \subset N^1(Y)$ spanned by the classes of divisors that have nonnegative intersection with every integral curve on $Y$. By definition, $\Nef(Y)$ is dual to the Mori cone of curves (see \cite{Lazarsfeld}). Computing $\Nef(Y)$ requires finding nef divisors to generate a subcone of $\Nef(Y)$ and dually finding integral curves on $Y$ to bound $\Nef(Y)$ from above. In this paper, we carry out this strategy for  $\Nef(M_{H, D}({\bf v}))$ when $M_{H,D}({\bf v})$ contains only stable sheaves.

We now explain our strategy in greater detail. Given a Chern character ${\bf v}$,  we define an {\em extremal Chern character} ${\bf w}$.  Intuitively, ${\bf w}$ is chosen to make the numerical wall $W({\bf w},{\bf v})$ that it determines as large as possible, subject to natural restrictions which will ensure that the wall is an actual wall where a semistable sheaf is destabilized.   See Definition \ref{def-extremal} for the precise conditions defining ${\bf w}$.  If sufficiently strong Bogomolov-type results are known about the Chern characters of stable sheaves on $X$, then ${\bf w}$ can be computed explicitly.

\begin{theorem}
Assume that the discriminant $\Delta_{H,D} ({\bf v}) \gg 0$. Then the Gieseker wall for ${\bf v}$ in the $(H,D)$-slice is given by the wall $W({\bf w}, {\bf v})$. 
\end{theorem}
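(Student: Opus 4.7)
The plan is to prove the theorem in two steps: first, show that $W(\mathbf{w},\mathbf{v})$ is an \emph{actual} wall where some $(H,D)$-Gieseker semistable sheaf of character $\mathbf{v}$ is destabilized; second, show that no actual wall in the $(H,D)$-slice for $\mathbf{v}$ lies above $W(\mathbf{w},\mathbf{v})$. Taken together, these give the Gieseker wall statement.

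For the first step, I would construct an explicit destabilizing short exact sequence
$$0 \to F \to E \to Q \to 0$$
of torsion-free coherent sheaves with $\mathbf{ch}(F)=\mathbf{w}$ and $\mathbf{ch}(Q)=\mathbf{v}-\mathbf{w}$. The conditions built into the definition of the extremal character $\mathbf{w}$ (Definition \ref{def-extremal}) should guarantee that $\mathbf{w}$ is the Chern character of an $(H,D)$-Gieseker semistable sheaf $F$, and likewise that $\mathbf{v}-\mathbf{w}$ supports such a sheaf $Q$. A nontrivial extension class exists because $\Ext^1(Q,F)\ne 0$ for generic $F, Q$: Riemann--Roch computes the Euler characteristic, and in the large-discriminant regime the vanishing of $\Hom(Q,F)$ and $\Ext^2(Q,F)$ follows from slope considerations together with Serre duality. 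The middle term $E$ then has Chern character $\mathbf{v}$, and $(H,D)$-Gieseker semistability of $E$ is forced by the observation that $F$ and $Q$ share the same twisted reduced Hilbert polynomial on the wall $W(\mathbf{w},\mathbf{v})$; any destabilizing subsheaf of $E$ would produce a strictly larger numerical wall than $W(\mathbf{w},\mathbf{v})$, contradicting the maximality in the definition of $\mathbf{w}$.

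For the second step, I would argue by contradiction: assume some class $\mathbf{u}$ yields a wall $W(\mathbf{u},\mathbf{v})$ strictly larger than $W(\mathbf{w},\mathbf{v})$ along which some $(H,D)$-Gieseker semistable $E$ of character $\mathbf{v}$ is destabilized. Then on $W(\mathbf{u},\mathbf{v})$ there is a Jordan--H\"older short exact sequence of Bridgeland semistable objects whose subobject has Chern character $\mathbf{u}$. Replacing cohomology sheaves by their Harder--Narasimhan factors, one reduces to a short exact sequence of coherent torsion-free sheaves. The Bogomolov inequality applied to both subobject and quotient, together with compatibility of twisted slopes on $W(\mathbf{u},\mathbf{v})$, then forces $\mathbf{u}$ to satisfy every numerical constraint that defines extremality in Definition \ref{def-extremal}. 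Since $\mathbf{w}$ was chosen precisely to maximize $W(-,\mathbf{v})$ subject to those constraints, we get $W(\mathbf{u},\mathbf{v}) \leq W(\mathbf{w},\mathbf{v})$, a contradiction.

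The main obstacle is the second step, specifically ensuring that a destabilizer at a hypothetical larger wall can always be replaced by a coherent subsheaf whose Chern character lies in the numerical region over which $\mathbf{w}$ is extremal. This is where the hypothesis $\Delta_{H,D}(\mathbf{v})\gg 0$ does the essential work: it provides asymptotic room for Bogomolov-type inequalities on both $\mathbf{u}$ and $\mathbf{v}-\mathbf{u}$ to be sharp, rules out pathological rank-zero or torsion contributions in the tilted heart, and guarantees that the Bridgeland wall geometry behaves as expected near the large-volume limit. The first step is largely bookkeeping in comparison, once the existence of the candidate sheaves $F$ and $Q$ is secured from the extremality hypothesis.
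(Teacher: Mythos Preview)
Your two-step outline matches the paper's, but your assessment of where the work lies is inverted, and this causes you to gloss over the genuinely delicate part.

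\textbf{Step 1 is not bookkeeping.} The paper explicitly calls the construction of a Gieseker-stable extension $E$ ``fairly delicate,'' and devotes \S\ref{sec-nesting}--\S\ref{sec-curves} to it. Your proposed argument that ``any destabilizing subsheaf of $E$ would produce a strictly larger numerical wall than $W({\bf w},{\bf v})$, contradicting the maximality in the definition of ${\bf w}$'' does not work. A Gieseker-destabilizing subsheaf $F'\subset E$ need not satisfy the conditions \ref{cond-rankBound}--\ref{cond-discriminantMinimal} of Definition \ref{def-extremal}; in particular there is no reason its numerical wall should exceed $W({\bf w},{\bf v})$. The paper's actual mechanism is an inductive \emph{nesting} statement (Proposition \ref{prop-nesting}): the Gieseker wall for the quotient character ${\bf u}={\bf v}-{\bf w}$ is properly nested inside $W({\bf w},{\bf v})$. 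This ensures that a stable $Q\in M_{H,D}({\bf u})$ is $\sigma_0$-\emph{stable} on $W$, so that inside the abelian category of $\sigma_0$-semistables of fixed slope, $Q$ is simple and $F$ is semisimple. Lemmas \ref{lem-subobjects}--\ref{lem-groupAction} then control all subobjects of a general extension $E$, and a separate three-step check establishes that $E$ is $\sigma_+$-stable, torsion-free, and $\mu_{H,D}$-semistable (hence $(H,D)$-Gieseker stable). The nesting proposition in turn requires a nontrivial Farey-sequence analysis (Lemmas \ref{lem-farey}--\ref{lem-quotientInvariants}) comparing the extremal character for ${\bf u}$ with ${\bf w}$. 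None of this appears in your sketch.

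\textbf{Step 2 has a gap as well.} You write that after passing to Harder--Narasimhan factors one ``reduces to a short exact sequence of coherent torsion-free sheaves,'' and then Bogomolov forces ${\bf u}$ into the extremal region. But in the tilted heart $\cA_\beta$ a destabilizing subobject $F'\subset E$, while automatically a torsion-free sheaf, need not be a \emph{subsheaf}: the sheaf map $F'\to E$ can have kernel, and then $r(F')$ may exceed $r({\bf v})$, so condition \ref{cond-rankBound} fails and the extremality of ${\bf w}$ gives no contradiction. The paper handles this via Lemma \ref{lem-higherRank}: if $F'\to E$ is not injective, the wall radius is bounded linearly in $\overline\Delta_{H,D}({\bf v})$, whereas the radius of $W({\bf w},{\bf v})$ grows quadratically. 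This growth comparison, not Bogomolov alone, is what forces injectivity and hence the rank bound for $\overline\Delta_{H,D}({\bf v})\gg 0$. After that, an asymptotic argument on the right endpoint $x_W\to\overline\mu_{H,D}({\bf w})$ pins down the slope and discriminant of $F'$.
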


Throughout the paper we will consider $r({\bf v})$, $c_1({\bf v})$, $X$, $H$, and $D$ as fixed, and $\Delta_{H,D}({\bf v})$ as variable.  Thus, we write $\Delta_{H,D}({\bf v})\gg 0$ to mean that $\Delta_{H,D}({\bf v})>C$ for some constant $C$ depending on $r({\bf v})$, $c_1({\bf v})$, $X$, $H$, and $D$.

A numerical check, using inequalities  discovered in \cite{ABCH}, \cite{CoskunHuizenga2} and further elaborated in \cite{Betal}, confirms that the Gieseker wall is contained in $W({\bf w}, {\bf v})$. Conversely, we need to construct a family of non-isomorphic Gieseker stable sheaves that become $S$-equivalent for the Bridgeland stability conditions on $W({\bf w}, {\bf v})$. We construct the necessary sheaves inductively. The key is to show that the Gieseker walls corresponding to ${\bf w}$ and the quotient Chern character ${\bf u} = {\bf v} - {\bf w}$ are both nested inside $W({\bf w}, {\bf v})$. One can then inductively construct the required curves (see \S \ref{sec-curves}). 

Given two Chern characters ${\bf z}$ and ${\bf v}$, define  the incidence variety 
$$Z({\bf z}) := \{ (E,F) \in  M_{H, D}({\bf z}) \times M_{H, D}({\bf v}) | \Hom(E,F) \not=0 \}.$$ In general, describing the geometric  properties of $Z({\bf z})$, even determining when they are nonempty, can be very challenging. Our results in \S \ref{sec-curves} imply that $Z({\bf w})$ is nonempty for the extremal Chern character ${\bf w}$ provided the discriminant of ${\bf v}$ is sufficiently large. 

\subsection*{Applications to birational geometry} For the rest of the introduction, we additionally assume that ${\bf v}$ is a Chern character such that $M_{H, D}({\bf v})$ contains only stable sheaves.  In this case, Bayer and Macr\`i \cite{BayerMacri2} associate a nef divisor to the Gieseker wall.

\begin{corollary}\label{cor-main}
If $M_{H, D}({\bf v})$ contains only stable sheaves and $\Delta_{H,D}({\bf v})\gg 0$, then the Bayer-Macr\`i divisor associated to $W({\bf w}, {\bf v})$ is a nef divisor lying in the boundary of  $\Nef(M_{H, D}({\bf v}))$.
\end{corollary}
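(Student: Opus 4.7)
The plan is to combine the main theorem above with the curve construction from \S\ref{sec-curves} and the Bayer--Macr\`i positivity lemma. Under the assumption that $M_{H,D}({\bf v})$ contains only stable sheaves, the Gieseker moduli space coincides with the large volume limit of the Bridgeland moduli spaces in the $(H,D)$-slice, so the construction of \cite[Lemma 3.3]{BayerMacri2} produces a well-defined class $\ell_\sigma \in N^1(M_{H,D}({\bf v}))$ for every stability condition $\sigma$ in the closure of the Gieseker chamber, and this class is nef for such $\sigma$. By the preceding theorem, the Gieseker wall is $W({\bf w},{\bf v})$, so the Bayer--Macr\`i divisor $D_W$ associated to $W({\bf w},{\bf v})$ is obtained as the limit of nef classes $\ell_\sigma$ as $\sigma$ approaches $W$ from inside the Gieseker chamber. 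Hence $D_W$ itself is nef.

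Next I would produce an irreducible proper curve $C \subset M_{H,D}({\bf v})$ along which $D_W \cdot C = 0$. This is exactly what the curve construction of \S\ref{sec-curves} delivers: an explicit one-parameter family of non-isomorphic $(H,D)$-twisted Gieseker stable sheaves of Chern character ${\bf v}$ whose members become $S$-equivalent with respect to any Bridgeland stability condition on $W({\bf w},{\bf v})$. Applying the positivity part of \cite[Lemma 3.3]{BayerMacri2}, the morphism defined by $\ell_\sigma$ for $\sigma \in W$ collapses any $S$-equivalence class of $\sigma$-semistable objects to a single point. In particular the image of $C$ under this morphism is a point, which forces $D_W \cdot C = 0$. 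Combining nefness of $D_W$ with the existence of such a curve places $D_W$ on the boundary of $\Nef(M_{H,D}({\bf v}))$, which is exactly the claim.

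The main difficulty is not in the corollary itself, which is essentially a formal consequence of the Bayer--Macr\`i machinery, but in the two ingredients it relies on. The first is the identification of the Gieseker wall with $W({\bf w},{\bf v})$, which rests on Bogomolov-type inequalities ruling out any larger destabilizing wall. The second, and more delicate, is the inductive construction of the $S$-equivalence curve $C$ on $W({\bf w},{\bf v})$; this requires controlling the Gieseker walls for both the destabilizing subobject character ${\bf w}$ and the quotient character ${\bf u} = {\bf v} - {\bf w}$, and in particular showing they are nested strictly inside $W({\bf w},{\bf v})$ so that suitable stable sheaves with those characters exist and can be glued into a nontrivial family.
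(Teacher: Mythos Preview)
Your proposal is correct and follows essentially the same approach as the paper: the corollary is a formal consequence of Theorem~\ref{thm-main} together with the Positivity Lemma~\ref{lem-positivity}, using the quasiuniversal family on $M_{H,D}({\bf v})$ to produce the nef Bayer--Macr\`i class and the curves of \S\ref{sec-curves} to witness that it lies on the boundary. The paper does not give a separate proof of the corollary beyond the discussion following Theorem~\ref{lem-positivity} and the statement of Theorem~\ref{thm-main}, and your write-up accurately captures that logic.
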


Computing the Gieseker wall is closely related to Bogomolov-type inequalities for $(H,D)$-semistable sheaves on $X$.  The correspondence is tightest when $\Pic(X) \cong \Z H$ with $H$ effective, so we focus on this case.  Then the determinantal line bundles on $M_H({\bf v})$ span a $2$-dimensional subspace of $N^1(M_H({\bf v}))$.  The intersection of $\Nef(M_H({\bf v}))$ with this determinantal subspace is a cone spanned by two classes.  One of these classes $\cL_1$ corresponds to the Donaldson-Uhlenbeck-Yau compactification by slope-semistable sheaves. When $\Delta_{H,D}({\bf v})\gg 0$, then there are singular sheaves in $M_{H,D}({\bf v})$ and the map to the twisted Donaldson-Uhlenbeck-Yau space is not an isomorphism (\cite{HuybrechtsLehn},\cite{Greb}). Let $\cL_2$ span the other extremal ray.  Given a rank $r$ and slope $\mu = c_1/r$, let $\delta(r,\mu)$ denote the minimal discriminant of a semistable sheaf of slope $\mu$ and rank at most $r$.  Then the inequality $$\Delta \geq \delta(r,\mu)$$ holds for any semistable sheaf with invariants $(r,\mu,\Delta)$.  This refines the ordinary Bogomolov inequality $\Delta\geq 0$.

\begin{corollary}\label{cor-BogIntro}
Assume that $\Pic(X) \cong \Z H$ with $H$ effective.  The computation of $\cL_2$ for all characters ${\bf v}$ with $\Delta({\bf v})\gg 0$ is equivalent to the computation of the function $\delta(r,\mu)$ for all $r>0$, $\mu\in \Q$ with $r\mu \in \Z$.
\end{corollary}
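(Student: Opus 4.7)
My plan is to unpack the definition of the extremal Chern character ${\bf w}$ in the special setting $\Pic(X)\cong\Z H$ and recast Corollary~\ref{cor-BogIntro} as a bookkeeping equivalence.  Since every Chern character is then determined by its rank, slope, and discriminant, the choice of an extremal sub-character of ${\bf v}$ becomes an explicit optimization: among all triples $(r_w,\mu_w,\Delta_w)$ satisfying $r_w>0$, $r_w\mu_w\in\Z$, the admissibility constraints on $(r_w,\mu_w)$ that guarantee $W({\bf w},{\bf v})$ is a genuine numerical wall where semistability is broken, and the refined Bogomolov-type inequality $\Delta_w\geq\delta(r_w,\mu_w)$, one chooses the triple that maximizes the radius of $W({\bf w},{\bf v})$.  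By the main theorem together with Corollary~\ref{cor-main}, the boundary nef class $\cL_2$ is determined by this optimal wall.

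For the direction ``$\delta$ known $\Rightarrow$ $\cL_2$ known'', I note that once $(r({\bf v}),\mu({\bf v}))$ is fixed, only finitely many pairs $(r_w,\mu_w)$ satisfy the admissibility constraints while producing a wall of radius exceeding the fixed $(H,D)$-slice baseline.  Substituting $\Delta_w=\delta(r_w,\mu_w)$ into the wall-radius formula for each admissible pair and picking the maximum produces the extremal ${\bf w}$, hence $\cL_2$ by Corollary~\ref{cor-main}.

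For the converse, my strategy is induction on $r_0$.  The rank-one case $\delta(1,\mu)$ is elementary since rank-one torsion-free sheaves are twists of ideal sheaves of zero-dimensional subschemes.  Assuming $\delta(r,\mu)$ is known for all $r<r_0$, and given a target slope $\mu_0$ with $r_0\mu_0\in\Z$, I would construct a Chern character ${\bf v}$ whose extremal sub-character is forced to have rank $r_0$ and slope $\mu_0$.  A natural choice is to take $r({\bf v})$ a suitable multiple of $r_0$ with $\mu({\bf v})$ slightly larger than $\mu_0$ and $\Delta({\bf v})$ very large; the explicit wall-radius formula, combined with the inductively known values of $\delta$ at ranks $<r_0$, then rules out all competitors $(r_w,\mu_w)\neq (r_0,\mu_0)$.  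Once the extremal ${\bf w}$ is pinned to $(r_0,\mu_0)$, the known value of $\cL_2({\bf v})$ determines the radius of $W({\bf w},{\bf v})$, which forces $\Delta({\bf w})=\delta(r_0,\mu_0)$.

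The hard part is the forcing step in the converse: given a target $(r_0,\mu_0)$, choosing $(r({\bf v}),\mu({\bf v}),\Delta({\bf v}))$ so that the wall-radius function is strictly maximized over the admissible region precisely at $(r_0,\mu_0,\delta(r_0,\mu_0))$, using only the inductively available values of $\delta$ at ranks below $r_0$.  I expect this to reduce to a careful analysis of the wall-radius formula, exploiting that radii grow like $\sqrt{\Delta({\bf v})}$ while contributions of competing characters stay controlled once their discriminants are constrained by the inductive hypothesis; making this uniform in $(r_0,\mu_0)$ is where the real technical work lies.
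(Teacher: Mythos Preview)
Your forward direction is correct and essentially what the paper does: once $\delta$ is known, Remark~\ref{rem-Farey} fixes $\tilde\mu_H({\bf w})$ as the Farey predecessor of $\tilde\mu_H({\bf v})$ in $F_{r({\bf v})}$, and then $\Delta({\bf w})$ is read off from $\delta$, determining the wall and hence $\cL_2$.

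The converse direction, however, is where your proposal diverges from the paper and runs into trouble.  The paper does \emph{not} argue by induction on rank and does not take $r({\bf v})$ to be a multiple of the target rank.  Given $(r,\mu)$ with $\mu=b/s$ reduced and $r=ks$, it lets $\frac{b'}{s'}$ be the right Farey neighbor of $\frac{b}{s}$ in $F_r$ and sets ${\bf v}=(r',\mu',\Delta')$ with $r'=s+s'$ and $\mu'=\frac{b+b'}{s+s'}$ the mediant.  Then $\frac{b}{s}$ and $\mu'$ are Farey neighbors in $F_{r'}$, so condition~(E2) forces $\tilde\mu_H({\bf w})=\mu$ outright.  A short count ($s'<r$ gives $(k+1)s>r'$) shows every multiple of $s$ at most $r'$ is already at most $r$, so $\Delta({\bf w})=\delta(r,\mu)$ exactly.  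Finally, since $\gcd(r',b+b')=1$, the space $M_H({\bf v})$ has no strictly semistables, so $\cL_2$ is well-defined and determines $\Delta({\bf w})$.

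Your proposed construction does not achieve this.  If $r({\bf v})=mr_0$ with $m\geq 2$, then multiples of $s$ up to $r({\bf v})$ include ranks strictly larger than $r_0$, and (E4) would return $\delta(mr_0,\mu_0)$ rather than $\delta(r_0,\mu_0)$.  If $m=1$, you need $\mu({\bf v})>\mu_0$ with $r_0\mu({\bf v})\in\Z$ and $\mu_0$ the Farey predecessor of $\mu({\bf v})$ in $F_{r_0}$; there is no reason such a $\mu({\bf v})$ exists with denominator dividing $r_0$.  The Farey mediant is precisely what produces a rank $r'$ strictly between $r_0$ and $r_0+s$, threading this needle.  Relatedly, your plan to ``rule out competitors $(r_w,\mu_w)\neq(r_0,\mu_0)$'' via wall-radius comparisons and the inductive hypothesis misreads the structure of the extremal character: by~(E2) the slope of ${\bf w}$ is pinned down \emph{first}, purely by the Farey combinatorics of $(r({\bf v}),\tilde\mu_H({\bf v}))$, with no reference to discriminants or wall radii.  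There are no competing slopes to eliminate once ${\bf v}$ is chosen correctly, and the inductive knowledge of $\delta$ at smaller ranks plays no role.
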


In the higher Picard rank case, global information about the nef cone can frequently be obtained by varying the twisting divisor $D$.  If ${\bf v}$ is a Chern character such that $H$-Gieseker semistability and $\mu_H$-slope stability coincide, then both of these notions are also equivalent to $(H,D)$-Gieseker stability for any choice of twisting divisor $D$.  Varying the twisting divisor, the various Bridgeland $(H,D)$-slices correspond to rays in $N^1(M_H({\bf v}))$ by the Bayer-Macr\`i construction.  The corresponding extremal nef divisors vary as well.  This method was used in \cite{Betal} to compute the nef cone of the Hilbert scheme of points on a del Pezzo surface of degree $1$.  In this paper we will illustrate this method by studying a moduli space of sheaves on $\P^1\times \P^1$ in detail.

Bridgeland stability has been successfully used by many authors to study the birational geometry of moduli spaces of sheaves on surfaces. We refer the reader to  \cite{ABCH}, \cite{BertramMartinezWang}, \cite{CoskunHuizenga2},  \cite{CoskunHuizengaWoolf}, \cite{Ohkawa}, \cite{LiZhao} for $\PP^2$, \cite{BertramCoskun} for Hirzebruch and del Pezzo surfaces, \cite{BayerMacri2}, \cite{BayerMacri3}, \cite{MYY1}, \cite{MYY2} for K3 surfaces, \cite{Nuer} for Enriques surfaces, and \cite{MM}, \cite{YanagidaYoshioka}, \cite{Yoshioka2} for abelian surfaces. The ample cones of Hilbert schemes of points on surfaces were studied in \cite{Betal}. This paper generalizes and unifies the techniques in these papers for computing the ample cones to moduli spaces of sheaves on arbitrary surfaces. In a parallel development, the papers \cite{BertramMartinez} and \cite{Yoshioka3} study Thaddeus flips resulting from change of polarization in terms of Bridgeland stability. 

\subsection*{Organization of the paper} In \S \ref{sec-preliminaries}, we introduce the necessary background on $M_{H,D}({\bf v})$ and Bridgeland stability. In \S \ref{sec-destabilizing}, we introduce the extremal Chern character ${\bf w}$  associated to ${\bf v}$ and state our main result. In \S \ref{sec-Gieseker}, we show that the Gieseker wall is no larger than $W({\bf w},{\bf v})$ if the discriminant of ${\bf v}$ is sufficiently large. In \S \ref{sec-nesting}, we show that the Gieseker walls corresponding to ${\bf w}$ and ${\bf u} = {\bf v}-{\bf w}$ are nested in $W({\bf w}, {\bf v})$. Using the nesting result, in \S \ref{sec-curves}, we construct curves of Gieseker stable sheaves which become $S$-equivalent along $W({\bf w},{\bf v})$.  In \S \ref{sec-examples}, we study the nef cone of moduli spaces of rank $2$ sheaves several families of surfaces.

\subsection*{Acknowledgements} We would like to thank Arend Bayer, Aaron Bertram, Emanuele Macr\`{i},  Benjamin Schmidt and Matthew Woolf for many enlightening discussions on Bridgeland stability. Part of this work was carried out during the Algebraic Geometry Summer Research Institute in Utah. We thank the American Mathematical Society, the University of Utah and the organizers for providing us with ideal working conditions.

\section{Preliminaries}\label{sec-preliminaries}

In this section,  we review basic facts concerning moduli spaces of Gieseker semistable sheaves and Bridgeland stability conditions. 

\subsection{Basic definitions}
We refer the reader to \cite{HuybrechtsLehn} and \cite{MatsukiWentworth} for an in depth treatment of (twisted) Gieseker semistability.  

Let $X$ be a smooth projective surface over $\C$. A sheaf on $X$ will always mean a coherent sheaf of pure dimension. Fix an ample divisor $H \in \Pic(X)$. For any  $\Q$-divisor $D$ on $X$,  define the twisted Chern character $\ch^D = e^{-D} \ch$, with expansion $$\ch_0^D = \ch_0, \quad \ch_1^D = \ch_1 - D \ch_0, \quad \ch_2^D = \ch_2 - D \ch_1 + \frac{D^2}{2} \ch_0.$$ We will find it convenient to work with coordinates provided by the slope and the discriminant. For a sheaf $E$ with $\rk(E)>0$, define the $(H,D)$-slope $\mu_{H,D}$ and $(H,D)$-discriminant $\Delta_{H,D}$ by $$\mu_{H,D} = \frac{H \cdot \ch_1^D}{H^2 \ch_0^D}, \quad \Delta_{H,D} = \frac{1}{2} \mu_{H,D}^2 - \frac{\ch_2^D}{H^2 \ch_0^D}.$$   A sheaf $E$ of positive rank is $\mu_{H,D}$-(semi)stable  if for every nonzero proper subsheaf $F \subset E$, $$\mu_{H,D}(F) \leqpar \mu_{H,D}(E).$$ Note that $\mu_{H,D}$ only differs from $\mu_{H} := \mu_{H,0}$ by a constant, so $\mu_{H,D}$-(semi)stability and $\mu_{H}$-(semi)stability coincide.

A slightly modified version of these invariants will often be more useful.  We define $\overline{\ch}^D = \ch^{D+\frac{1}{2}K_X}$, and define modified slopes $\overline{\mu}_{H,D}$ and discriminants $\overline{\Delta}_{H,D}$ using this modified Chern character; thus $$\overline{\mu}_{H,D} = \mu_{H,D+\frac{1}{2}K_X} \qquad\mbox{and}\qquad \overline{\Delta}_{H,D}=\Delta_{H,D+\frac{1}{2}K_X}.$$ The additional twist by $\frac{1}{2}K_X$ (especially in the discriminant) greatly simplifies computations with twisted Gieseker semistability, which we now discuss.  The {\em reduced twisted Hilbert polynomial} of a positive rank sheaf $E$ is defined by $$p_{H,D}^E(m) = \frac{\chi(E \otimes \OO_X(mH-D))}{\rk(E)},$$ where the Euler characteristic is computed formally. A sheaf $E$ is {\em $(H,D)$-twisted Gieseker (semi)stable} if for every nonzero proper subsheaf $F \subset E$, $$p_{H,D}^F(m) \leqpar p_{H,D}^E(m)$$ for all $m\gg 0$.  When $D=0$, we recover usual $H$-Gieseker semistability.  When referring to $H$-Gieseker semistability, we will omit $D$ from our notation. A simple Riemann-Roch computation shows that a sheaf $E$ is $(H,D)$-twisted Gieseker (semi)stable if and only if
\begin{enumerate}
\item $E$ is $\mu_{H,D}$-semistable, and 
\item if $F\subsetneq E$ with $\mu_{H,D}(F) = \mu_{H,D}(E)$, then $$\overline{\Delta}_{H,D}(F)\geqpar \overline{\Delta}_{H,D}(E).$$
\end{enumerate}
Note that this equivalence is not typically correct when using the ordinary discriminant $\Delta_{H,D}$ instead of $\overline\Delta_{H,D}$ unless $K_X$ is parallel to $H$.  

\begin{remark}
Note that when $r({\bf v})$, $c_1({\bf v})$, $X$, $H$, and $D$ are fixed, we have $\Delta_{H,D}({\bf v}) \gg 0$ if and only if $\overline{\Delta}_{H,D}({\bf v}) \gg 0$.  Thus the results in the introduction could also be stated in terms of the discriminant $\overline \Delta_{H,D}({\bf v})$ instead of $\Delta_{H,D}({\bf v})$.
\end{remark}

\subsection{Properties of the moduli space}
Recall that two semistable sheaves are {\em $S$-equivalent} with respect to a notion of stability if they have the same Jordan-H\"{o}lder factors for that stability condition.  Fix the Chern character ${\bf v} $ of an $(H,D)$-twisted Gieseker semistable sheaf. Matsuki and Wentworth \cite{MatsukiWentworth} prove that there are projective moduli spaces $M_{H,D}({\bf v})$ parameterizing $S$-equivalence classes of $(H,D)$-twisted Gieseker semistable sheaves on $X$ with invariants ${\bf v}$. 

Recall the following fundamental theorems of O'Grady for the ordinary Gieseker moduli space.

\begin{theorem}[{\cite[Theorems B, D]{OGrady}, \cite[Theorems 5.2.5, 9.3.3, 9.4.3]{HuybrechtsLehn}}]\label{thm-OGrady}
Let $(X,H)$ be a smooth polarized surface, and let ${\bf v}\in K(X)$ with $r({\bf v})>0$. If $\Delta_H({\bf v})\gg 0$ (depending on $r({\bf v})$, $X$, and $H$), then the moduli space $M_H({\bf v})$ is normal, generically smooth, irreducible, and nonempty of the expected dimension.
Furthermore, the slope stable sheaves are dense in $M_H({\bf v})$.
\end{theorem}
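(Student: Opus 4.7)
The plan is to follow O'Grady's strategy: produce enough $\mu_H$-stable sheaves to populate the moduli space, then bound the dimension of the locus of non-$\mu$-stable sheaves to deduce density, irreducibility, and the remaining regularity statements. Because the conclusion is really a conjunction of several assertions, I would attack them in a specific order, using each step to feed into the next.

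First, I would establish nonemptiness and a lower bound on $\dim M_H({\bf v})$. The idea is to build $\mu_H$-stable sheaves with invariants ${\bf v}$ by elementary modifications: start with a $\mu_H$-stable sheaf of the same rank and slope but smaller $c_2$ (or the rank-one case $I_Z \otimes L$ when $r=1$), and then enlarge $c_2$ by iteratively taking kernels of surjections onto skyscraper sheaves. Stability is preserved when $\Delta_H$ is large because any destabilizing subsheaf of a modification must either be a subsheaf of the original sheaf or differ from one by a torsion quotient of controlled length. Local deformation theory at such a $\mu_H$-stable point, combined with Serre duality to control $\Ext^2$, then shows every component of $M_H({\bf v})$ that meets the $\mu_H$-stable locus has dimension at least the expected value $2r({\bf v})^2 \Delta_H({\bf v}) - (r({\bf v})^2-1)\chi(\mathcal{O}_X)$.

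Second, I would bound the dimension of the locus of $\mu_H$-semistable but not $\mu_H$-stable sheaves, and of the strictly Gieseker-semistable locus. Every such sheaf has a Jordan--H\"older filtration (for slope stability) whose factors have rank strictly smaller than $r({\bf v})$; using Bogomolov's inequality for each factor and the subadditivity of the discriminant along a Jordan--H\"older filtration with equal slopes, one bounds the number of moduli parameterizing such filtrations by a function that grows strictly more slowly in $\Delta_H({\bf v})$ than the expected dimension grows. For $\Delta_H({\bf v}) \gg 0$, this gap forces the non-$\mu_H$-stable locus to have strictly smaller dimension than any component containing $\mu_H$-stable sheaves, so slope-stable sheaves are dense.

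Third, irreducibility follows by connecting any $\mu_H$-stable sheaf to the standard family constructed in the first step. Concretely, one shows that a $\mu_H$-stable sheaf $E$ with $\Delta_H(E) \gg 0$ admits a line subbundle of large negative degree whose quotient is torsion-free with smaller $c_2$, so deformation theory places $E$ in an incidence variety over a parameter space inductively known to be irreducible; Gieseker's original argument via deformation to fixed determinant and decreasing $c_2$ works equally well. Finally, normality and generic smoothness reduce to showing that the singular locus of $M_H({\bf v})$ has codimension at least two: the singular locus is contained in the union of the non-$\mu_H$-stable locus and the obstructed $\mu_H$-stable locus, and both have small dimension by the count above combined with a Serre-duality estimate on $\dim \Ext^2$.

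The main obstacle, in my view, is the quantitative dimension bound on the non-$\mu_H$-stable locus in the second step: making this effective requires careful bookkeeping of how the discriminant partitions among Jordan--H\"older factors, and it is precisely here that the hypothesis $\Delta_H({\bf v}) \gg 0$ does the heavy lifting. Once this inequality is established, the density statement, irreducibility, and the codimension-two bound on the singular locus all follow formally.
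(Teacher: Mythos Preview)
The paper does not prove this theorem at all: it is stated as a black-box citation to O'Grady \cite{OGrady} and Huybrechts--Lehn \cite{HuybrechtsLehn}, with no proof given. So there is nothing in the paper to compare your proposal against.

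That said, your outline is a reasonable summary of the strategy in the cited references. O'Grady's argument does hinge on the dimension estimate for the non-$\mu_H$-stable locus (your second step), and the construction of stable sheaves via elementary modifications (your first step) is standard. One correction: your claim that normality follows from the singular locus having codimension at least two is not quite right---codimension two of the singular locus does not by itself imply normality. What is actually used is that $M_H({\bf v})$ is a local complete intersection (hence Cohen--Macaulay) once the generic smoothness and expected-dimension statements are known, and then Serre's criterion applies. You should also be aware that the irreducibility argument in \cite{OGrady} is somewhat more delicate than ``connect to a standard family''; it proceeds by showing the boundary is connected in high codimension and invoking a Hartogs-type argument, rather than the inductive incidence-variety picture you sketch (which is closer to Gieseker's earlier, less general, approach).
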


As we remarked earlier, the two slopes $\mu_H$ and  $\mu_{H,D}$ differ only by a constant. Consequently, $\mu_H$- and $\mu_{H,D}$-stability coincide. Since $\mu_{H,D}$-stability is an open condition, Theorem \ref{thm-OGrady} provides a nonempty Zariski open set in $M_{H,D}({\bf v})$ parameterizing $\mu_{H,D}$-stable sheaves.

\subsection{Bridgeland stability}\label{ssec-Bridgeland} In this subsection, we  review key facts concerning Bridgeland stability on surfaces. We refer the reader to \cite{ArcaraBertram}, \cite{ABCH}, \cite{BayerMacri2}, \cite{Betal}, \cite{bridgeland:stable}, \cite{Bridgeland}, \cite{CoskunHuizenga} and \cite{CoskunHuizenga2} for more details.

Let $\cD^b(X)$ denote the bounded derived category of coherent sheaves on $X$. A {\em Bridgeland stability condition} on $\cD^b(X)$ is a pair $\sigma=(Z, \cA)$, where $\cA$ is the heart of a bounded $t$-structure on $\cD^b(X)$ and $Z:K_0(X) \rightarrow \C$ is a group homomorphism mapping $\cA$ to the extended upper half plane and satisfying the Harder-Narasimhan and Support Properties \cite{bridgeland:stable}, \cite{BayerMacri2}. The set $\Stab(X)$ of Bridgeland stability conditions on $X$ is a complex manifold \cite{bridgeland:stable}.

Bridgeland \cite{Bridgeland} and Arcara, Bertram \cite{ArcaraBertram} constructed Bridgeland stability conditions on surfaces. Given an ample divisor $H$, an arbitrary $\R$-divisor $D$ and  $\beta \in \R$, define two subcategories of the category of coherent sheaves $\Coh(X)$ by
\begin{align*}\cT_{\beta} &= \{ E \in \Coh(X) : \overline{\mu}_{H,D}(G) > \beta \ \mbox{for every quotient} \ G \ \mbox{of} \ E\} \\
\cF_{\beta} &=\{ E \in \Coh(X) :  \overline{\mu}_{H,D}(F) \leq \beta  \ \mbox{for every subsheaf} \ F \ \mbox{of} \ E \}.\end{align*}
The pair $(\cT_{\beta}, \cF_{\beta})$ forms a torsion pair in $\Coh(X)$. Tilting  $\Coh(X)$ with respect to this torsion pair yields the heart $\cA_{\beta}$ of  a new bounded $t$-structure on $\cD^b(X)$ defined by
$$\cA_{\beta} = \{ E^{\bullet} \in \cD^b(X) : H^{-1}(E^{\bullet}) \in \cF_{\beta}, H^0(E^{\bullet}) \in \cT_{\beta}, H^i(E^{\bullet})=0, i\not= -1,0\}.$$

Let $\alpha$ be a positive real number. Define the central charge
$$Z_{\beta, \alpha} = -\overline{\ch}_2^{D+\beta H} + \frac{\alpha^2 H^2}{2} \overline{\ch}_0^{D +\beta H} + i H \overline{\ch}_1^{D + \beta H}.$$
Then the pair $\sigma_{\beta, \alpha} = (Z_{\beta, \alpha}, \cA_{\beta})$ is a Bridgeland stability condition for $\alpha, \beta \in \R, \alpha >0$ \cite{ArcaraBertram}. These stability conditions span a half-plane in $\Stab(X)$ which we call {\em the $(H,D)$-slice}.  The $\sigma_{\beta, \alpha}$-slope of an object with invariants $r > 0$, $\overline{\mu}_{H,D}$ and $\overline{\Delta}_{H,D}$ is given by $$\nu_{\sigma_{\beta, \alpha}} = - \frac{\Re Z_{\beta, \alpha}}{\Im Z_{\beta, \alpha}} = \frac{(\overline\mu_{H,D} - \beta)^2 - \alpha^2 - 2\overline\Delta_{H,D}}{\overline\mu_{H,D} - \beta}.$$

\subsubsection{Bridgeland Walls} Fix an invariant ${\bf v} \in K_{\num}(X)$. Assume that ${\bf w} \in K_{\num}(X)$ is an invariant that does not have the same $\sigma_{\beta, \alpha}$-slope as ${\bf v}$ everywhere in the $(H,D)$-slice. Then the {\em numerical wall}  $W({\bf w}, {\bf v})$ is the set of points $(\beta, \alpha)$ such that ${\bf v}$ and ${\bf w}$ have the same $\sigma_{\beta, \alpha}$-slope.  A numerical wall is an {\em actual} wall  if there exists a point $(\beta, \alpha) \in W({\bf w, v})$ and an exact sequence 
$$0 \to F \to E \to G \to 0$$ in $\cA_{\beta}$ with $\ch(F) = {\bf w}$, $\ch(E)= {\bf v}$ such that $E,F,G$ are $\sigma_{\beta, \alpha}$-semistable. We will refer to the sequence as the {\em destabilizing sequence}. We will frequently use the following facts about the Bridgeland walls \cite{Betal}, \cite{CoskunHuizenga}, \cite{Maciocia}:
\begin{enumerate}
\item The numerical walls  $W({\bf w},{\bf v})$  in the $(H,D)$-slice are disjoint. Let ${\bf v}$ and ${\bf w}$ have positive rank. If $\overline\mu_{H,D}({\bf v})= \overline\mu_{H,D}({\bf w})$, then $W({\bf w},{\bf v})$ is the vertical wall $\beta= \overline\mu_{H,D}({\bf v})$. If $\overline\mu_{H,D}({\bf v}) \neq \overline\mu_{H,D}({\bf w})$, then $W({\bf w},{\bf v})$ is the semicircular wall with center $(s,0)$ and radius $\rho$, where
$$s = \frac{1}{2} (\overline\mu_{H,D}({\bf v}) + \overline\mu_{H,D}({\bf w})) - \frac{\overline\Delta_{H,D}({\bf v}) - \overline\Delta_{H,D}({\bf w})}{\overline\mu_{H,D} ({\bf v})- \overline\mu_{H,D}({\bf w})},$$ 
$$\rho^2 = (s - \overline\mu_{H,D}({\bf v}))^2 - 2 \overline\Delta_{H,D}({\bf v}).$$
If $\rho^2$ is negative, then the wall is empty.
\item Let $W_1, W_2$ be two numerical walls to the  left  of  $ \beta = \overline\mu_{H,D}({\bf v})$ with centers $(s_1, 0)$, $(s_2,0)$. Then $W_1$ is nested inside $W_2$ if and only if $ s_1 > s_2$.
\item Let $W({\bf w, v})$ be an actual wall. If $0\to F \to E \to G \to 0$ is a destabilizing sequence at a point $(\beta, \alpha) \in W({\bf w, v})$, then it is a destabilizing sequence for every point of $W({\bf w, v})$.
\end{enumerate}

Define the {\em Gieseker wall} for $\bf{v}$ to be the largest actual semicircular wall to the left of $\beta = \overline\mu_{H,D}({\bf v})$ where a Gieseker semistable sheaf is destabilized. In this paper, we will be concerned with computing the Gieseker wall.  

\subsubsection{Large volume limit}\label{sssec-largevolume} Let ${\bf v}\in K_{\num}(X)$ have positive rank, and consider stability conditions $\sigma_{\beta,\alpha}$ with $\beta<\overline\mu_{H,D}({\bf v})$ and $\alpha \gg 0$.  Maciocia \cite{Maciocia} shows that any $\sigma_{\beta,\alpha}$-semistable object of character ${\bf v}$ is a $\mu_{H}$-semistable (equivalently, $\mu_{H,D}$-semistable) sheaf.  Observe that if a $\mu_H$-semistable sheaf $E$ is $\sigma_{\beta,\alpha}$-(semi)stable for any $\beta,\alpha$ with $\beta < \overline\mu_{H,D}(E)$, then it is also $(H,D)$-Gieseker (semi)stable.  Indeed, if $F\subset E$ is an $(H,D)$-Gieseker stable subsheaf with $\mu_{H,D}(F) = \mu_{H,D}(E)$, then $$\nu_{\sigma_{\beta,\alpha}}(F) \underset{{\scriptscriptstyle (}-{\scriptscriptstyle )}}{<} \nu_{\sigma_{\beta,\alpha}} (E)$$
if and only if $$\overline \Delta_{H,D} (F) \geqpar \overline\Delta_{H,D}(E)$$ by our explicit formula for the slope $\nu_{\sigma_{\beta,\alpha}}$.  Thus for $\alpha \gg 0$ the moduli space $M_{\sigma_{\beta,\alpha}}({\bf v})$ coincides with $M_{H,D}({\bf v})$.  

If we only know that $\sigma_{\beta,\alpha}$ lies above the Gieseker wall, it a priori may be the case that $M_{\sigma_{\beta,\alpha}}({\bf v})$ is larger than $M_{H,D}({\bf v})$.  However, it is still true that every $(H,D)$-Gieseker semistable sheaf is $\sigma_{\beta,\alpha}$-semistable.  This is all we will need to apply the Positivity Lemma.

\subsubsection{The Positivity Lemma} Our main tool for constructing nef divisors on the moduli space is the Positivity Lemma of Bayer and Macr\`{i}. Let $\sigma=(Z, \cA)$ be a Bridgeland stability condition on $X$, ${\bf v} \in K_{\num}(X)$ and $S$ a proper algebraic space of finite type over $\C$. Let $\cE \in D^b(X \times S)$ be a flat family of $\sigma$-semistable objects. Denote the two projections on $X \times S$ by $p$ and $q$, respectively. Then Bayer and Macr\`{i} define a numerical class on $D_{\sigma, \cE} \in N^1(S)$ by setting $$D_{\sigma, \cE} \cdot C = -\Im\left(\frac{Z(p_*(\cE \otimes q^*\cO_C))}{Z({\bf v})} \right)$$ for every integral curve $C$ on $S$.

\begin{theorem}[{Positivity Lemma \cite[Lemma 3.3]{BayerMacri2}}]\label{lem-positivity}
The divisor $D_{\sigma, \mathcal{E}}$ is nef on $S$. A projective, integral curve $C\subset S$ satisfies $D_{\sigma, \mathcal{E}} \cdot C = 0$ if and only if objects parameterized by $C$ are generically $S$-equivalent with respect to $\sigma$.
\end{theorem}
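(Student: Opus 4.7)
The plan is to translate nefness of $D_{\sigma,\cE}$ into a statement about the position of a derived pushforward in the heart $\cA$. First, rescale the central charge $Z$ (which only changes $D_{\sigma,\cE}$ by a positive scalar) so that $Z(\mathbf{v})$ lies on the negative real axis. Under this normalization,
\[
D_{\sigma,\cE}\cdot C \;=\; \frac{1}{|Z(\mathbf{v})|}\,\Im Z\bigl(Rp_*(\cE\otimes q^*\cO_C)\bigr),
\]
so nefness reduces to showing $\Im Z(Rp_*(\cE\otimes q^*\cO_C))\geq 0$ for every integral curve $C\subseteq S$.

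The crucial step is to show that $Rp_*(\cE\otimes q^*\cO_C)$ lies in $\cA$ (possibly up to a fixed shift). I would approach this using the short exact sequence $0\to\cO_C(-c_0)\to\cO_C\to\cO_{c_0}\to 0$ for a general point $c_0\in C$: pulling back via $q$, tensoring with $\cE$, and applying $Rp_*$ yields a triangle that relates $Rp_*(\cE\otimes q^*\cO_C(-c_0))$ to the fiber $\cE_{c_0}$, which is $\sigma$-semistable of class $\mathbf{v}$. Iterating with increasingly negative twists and using the support property of $\sigma$ to bound the phases of classes that can appear, one forces $Rp_*(\cE\otimes q^*\cO_C)$ into $\cA$. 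Since $Z$ maps $\cA$ into the closed upper half-plane, nefness follows at once.

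For the equality case, $D_{\sigma,\cE}\cdot C=0$ forces $Z(Rp_*(\cE\otimes q^*\cO_C))$ to be real. Any nonzero object of $\cA$ whose central charge is real must have all its Jordan-H\"older factors of the maximal phase, here equal to $\phi(\mathbf{v})$. Using flatness of $\cE$ and openness of semistability, one lifts the Jordan-H\"older filtrations of the fibers to a common filtration over a dense open $U\subseteq C$; constancy of the Jordan-H\"older factors along $U$ then yields generic $S$-equivalence. Conversely, generic $S$-equivalence makes the class of $Rp_*(\cE\otimes q^*\cO_C)$ a positive multiple of $\mathbf{v}$, placing $Z$ of it on the real axis and forcing $D_{\sigma,\cE}\cdot C=0$.

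The main obstacle will be establishing $Rp_*(\cE\otimes q^*\cO_C)\in\cA$. This is a genuinely global statement requiring both the support property of $\sigma$, which prevents pathological accumulation of phases, and a careful analysis of how the torsion pair defining $\cA$ behaves under direct image. Harder-Narasimhan filtrations in families, local finiteness of walls, and openness of semistability all enter the argument at this step.
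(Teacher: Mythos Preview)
The paper does not prove this statement; it is quoted verbatim as \cite[Lemma~3.3]{BayerMacri2} and used as a black box. So there is no ``paper's own proof'' to compare against.

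That said, your sketch is broadly the outline of Bayer--Macr\`i's original argument: normalize so that $Z({\bf v})\in\R_{<0}$, prove that $\Phi_{\cE}(\cO_C):=Rp_*(\cE\otimes q^*\cO_C)$ lies in the heart $\cA$, and read off both nefness and the characterization of orthogonal curves from the position of $Z(\Phi_\cE(\cO_C))$ in the upper half-plane. Your handling of the equality case is essentially right.

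The step where your sketch diverges from a workable proof is the argument that $\Phi_\cE(\cO_C)\in\cA$. The ``iterate with increasingly negative twists $\cO_C(-kc_0)$ and use the support property to bound phases'' idea does not obviously terminate or pin down the cohomology of $\Phi_\cE(\cO_C)$ with respect to $\cA$: at each stage you only relate $\Phi_\cE(\cO_C(-(k{+}1)c_0))$ to $\Phi_\cE(\cO_C(-kc_0))$ via a triangle whose third term is a single fiber $\cE_{c_k}$, and there is no a priori control on the $\cA$-cohomology of the starting object. In \cite{BayerMacri2} this step is handled not by such an induction but by invoking the Abramovich--Polishchuk/Polishchuk theory of constant $t$-structures on $\cD^b(X\times S)$: one builds a sheaf of $t$-structures over $S$ whose heart restricts to $\cA$ fiberwise, checks (using the support property and local finiteness) that it is Noetherian, and then proves that $Rp_*$ carries objects of the relative heart supported over a curve into $\cA$. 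If you want to make your write-up self-contained, that is the machinery you would need to import or reprove.
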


From now on we assume that $M_{H,D}({\bf v})$ contains only stable sheaves.  Then $M_{H,D}({\bf v})$ admits a quasiuniversal family $\cE$ (see \cite[Theorem A.5]{Mukai} or \cite{HuybrechtsLehn} in the case of ordinary Gieseker stability).
 For $(\beta, \alpha)$ in the region bounded by the Gieseker wall and the vertical wall, $\mathcal{E}$ is a family of $\sigma_{\beta, \alpha}$-semistable  objects. Hence, the Positivity Lemma provides a nef divisor on $M_{H,D}({\bf v})$ associated to the Gieseker wall. By \cite[\S 4]{BayerMacri2}, the nef divisor may be identified with the  determinantal class corresponding to the unique vector $\alpha$ satisfying $$\Im(Z(-)) = \chi (\alpha \cdot -),$$ where $\chi$ denotes the Euler pairing. Thus, the construction of Bayer and Macr\`{i} can only give determinantal classes.  When the irregularity $q(X)=0$, it is natural to guess that the determinant line bundles span $\NS(M({\bf v}))$ if $\Delta({\bf v}) \gg 0$. This is known when $\rk({\bf v})=2$ by results of Jun Li \cite{JunLi}, but open in general.

\section{The destabilizing sequence and strong Bogomolov inequalities}\label{sec-destabilizing}

Fix divisors $(H,D)$ giving a slice of $\Stab(X)$.  Let $e>0$ be a generator of the subgroup $$H \cdot \Pic(X) \subset \Z.$$  We define the \emph{reduced slope} of a class ${\bf v}$ of positive rank by $$\tilde \mu_H({\bf v}) = \frac{H^2}{e} \mu_H({\bf v}) = \frac{c_1({\bf v})\cdot H}{r({\bf v})e}.$$ The set of reduced slopes of stable vector bundles on $X$ of rank at most $r$ is precisely the set of rational numbers with denominator at most $r$.  The reduced slope determines and is determined by the ordinary slope $\mu_H$ or any of the twisted slopes $\mu_{H,D}$, $\overline\mu_{H,D}$.

\subsection{Extremal characters} Consider a variable Chern character ${\bf v}$ where $r({\bf v})>0$ and $c_1({\bf v})$ are fixed but $\overline\Delta_{H,D}({\bf v})$ is variable, subject to the restriction that ${\bf v}$ is $(H,D)$-stable.  By Theorem \ref{thm-OGrady}, ${\bf v}$ will be $(H,D)$-stable so long as $\overline\Delta_{H,D}({\bf v})$ is sufficiently large and ${\bf v}$ is integral.  In this section, we describe the Gieseker wall for $M_{H,D}({\bf v})$ in the $(H,D)$-slice under the assumption that $\overline\Delta_{H,D}({\bf v})$ is sufficiently large.

First, we describe the numerical invariants of the destabilizing subobject.  
 
 \begin{definition}\label{def-extremal} An \emph{extremal character} ${\bf w}$ for ${\bf v}$ is any Chern character satisfying the following defining properties.
\begin{enumerate}[label=(E\arabic*)]
\item \label{cond-rankBound}We have $0<r({\bf w})\leq r({\bf v})$, and if $r({\bf w})=r({\bf v})$, then $c_1({\bf v})-c_1({\bf w})$ is effective.
\item \label{cond-slopeClose} We have $\tilde \mu_H({\bf w}) < \tilde \mu_H({\bf v})$, and $\tilde \mu_H({\bf w})$ is as close to $\tilde \mu_H({\bf v})$ as possible subject to \ref{cond-rankBound}.
\item \label{cond-stable} The moduli space $M_{H,D}({\bf w})$ is nonempty.
\item \label{cond-discriminantMinimal} The discriminant $\overline\Delta_{H,D}({\bf w})$ is as small as possible, subject to \ref{cond-rankBound}-\ref{cond-stable}.
\item \label{cond-rankMaximal} The rank $r({\bf w})$ is as large as possible, subject to \ref{cond-rankBound}-\ref{cond-discriminantMinimal}.
\end{enumerate}
\end{definition}

Note that conditions \ref{cond-rankBound} and \ref{cond-slopeClose} uniquely determine $\tilde \mu_H({\bf w})$.  Property \ref{cond-discriminantMinimal} uniquely determines $\overline\Delta_{H,D}({\bf w})$ (note that the Bogomolov inequality and the bound on the rank guarantee that a minimum actually exists), and property \ref{cond-rankMaximal} uniquely determines $r({\bf w})$.  Furthermore, the discriminant $\overline\Delta_{H,D}({\bf v})$ plays no role in the determination of ${\bf w}$.  Thus the triple $$(r({\bf w}),\tilde\mu_H({\bf w}),\overline\Delta_{H,D}({\bf w}))$$ is uniquely determined by $r({\bf v})$ and $c_1({\bf v})$; on the other hand, there may be several possible choices for $c_1({\bf w})$.  The requirement that $\overline\Delta_{H,D}({\bf w})$ is as small as possible may restrict which first Chern classes $c_1({\bf w})$ are permissible.

Our main result in this paper is the following.

\begin{theorem}\label{thm-main}
If  $\overline\Delta_{H,D}({\bf v})\gg 0$, then the Gieseker wall for $M_{H,D}({\bf v})$ in the $(H,D)$-slice is $W({\bf w},{\bf v})$.  There are curves in $M_{H,D}({\bf v})$ parameterizing sheaves which become $S$-equivalent along this wall.

\end{theorem}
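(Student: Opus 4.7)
The plan is to establish the theorem by a two-sided argument bounding the Gieseker wall, followed by an inductive construction of the family of $S$-equivalent sheaves.

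First I would prove the upper bound, namely that the Gieseker wall for ${\bf v}$ is contained in $W({\bf w},{\bf v})$. Suppose $E$ is an $(H,D)$-Gieseker semistable sheaf of character ${\bf v}$ destabilized along some wall $W'$ by a short exact sequence
$$0 \to F \to E \to G \to 0$$
in $\cA_\beta$ at a point $(\beta,\alpha) \in W'$. A standard analysis (replacing $F$ by a Jordan-H\"older factor and using the large-volume discussion of \S\ref{sssec-largevolume}) reduces one to the case where $F$ is a torsion-free sheaf of positive rank. The wall $W'$ is then determined by $(r(F),\tilde\mu_H(F),\overline\Delta_{H,D}(F))$. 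Since $F$ is a subobject in $\cA_\beta$ of a $\mu_H$-semistable sheaf, we get $0 < r(F) \leq r({\bf v})$ and $\tilde\mu_H(F) \leq \tilde\mu_H({\bf v})$, with the effectivity of $c_1({\bf v})-c_1(F)$ in the equal-rank case (verifying \ref{cond-rankBound}). Because $F$ itself is Bridgeland semistable, its invariants satisfy the Bogomolov-type bounds developed in \cite{ABCH,CoskunHuizenga2,Betal}. A direct comparison of the centers and radii of the semicircular walls, invoking the minimization/maximization built into the definition of ${\bf w}$, then forces the radius of $W'$ not to exceed that of $W({\bf w},{\bf v})$. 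This is the content of \S\ref{sec-Gieseker}.

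Second I would show that $W({\bf w},{\bf v})$ is an actual wall. Set ${\bf u} := {\bf v} - {\bf w}$. The key intermediate result, which will occupy \S\ref{sec-nesting}, is that the Gieseker walls for ${\bf w}$ and for ${\bf u}$ in the $(H,D)$-slice are both strictly nested inside $W({\bf w},{\bf v})$. Granted this nesting, one may choose $F \in M_{H,D}({\bf w})$ and $G \in M_{H,D}({\bf u})$ which remain $\sigma_{\beta,\alpha}$-semistable in an open neighborhood of $W({\bf w},{\bf v})$. Because $F$ and $G$ then have the same $\sigma_{\beta,\alpha}$-slope along this wall, any nontrivial extension
$$0 \to F \to E \to G \to 0$$
yields an object $E$ of character ${\bf v}$ that is strictly $\sigma$-semistable and $S$-equivalent to $F \oplus G$ on $W({\bf w},{\bf v})$. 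To produce the promised curves in \S\ref{sec-curves}, I would compute $\chi(G,F)$ via Riemann-Roch; its leading behavior in $\overline\Delta_{H,D}({\bf v})$ is negative, so $\ext^1(G,F) \geq 2$ for $\overline\Delta_{H,D}({\bf v})\gg 0$. Just above $W({\bf w},{\bf v})$ each non-split extension $E$ is $\sigma_{\beta,\alpha}$-stable, hence Gieseker stable by the large-volume limit. Varying the extension class along any line in $\P(\Ext^1(G,F))$ gives a rational curve in $M_{H,D}({\bf v})$ whose members are all $S$-equivalent to $F \oplus G$, and therefore to each other, on the wall.

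The main obstacle I expect is the nesting step in \S\ref{sec-nesting}. The assumption $\overline\Delta_{H,D}({\bf v}) \gg 0$ does \emph{not} propagate to ${\bf w}$: the discriminant $\overline\Delta_{H,D}({\bf w})$ is a fixed bounded quantity by the minimality clause \ref{cond-discriminantMinimal}, while $\overline\Delta_{H,D}({\bf u})$ depends on $\overline\Delta_{H,D}({\bf v})$ only in a controlled way. Thus the nesting must be verified by a careful direct comparison of centers and radii of the relevant semicircular walls, making essential use of the three extremal properties \ref{cond-slopeClose}, \ref{cond-discriminantMinimal}, and \ref{cond-rankMaximal}. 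Once this nesting is in place, the Riemann-Roch estimate on $\ext^1(G,F)$ and the stability of extensions above the wall are comparatively standard, and the remainder of the theorem follows.
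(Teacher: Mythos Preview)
Your overall architecture matches the paper's, but there are two genuine gaps.

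First, in the upper bound argument, the claim that a destabilizing subobject $F\subset E$ in $\cA_\beta$ automatically satisfies $r(F)\leq r({\bf v})$ is false: subobjects in the tilted heart can have arbitrarily large rank (the quotient in $\cA_\beta$ contributes an $H^{-1}$ term). The paper handles this via the higher-rank wall bound (Lemma~\ref{lem-higherRank}): when the map of sheaves $F\to E$ is not injective, $\rho_{W'}^2$ is bounded linearly in $\overline\Delta_{H,D}({\bf v})$, whereas $\rho_W^2$ grows quadratically. Only after this comparison can one assume $F\hookrightarrow E$ as sheaves and deduce $r(F)\leq r({\bf v})$ together with the effectivity of $c_1({\bf v})-c_1(F)$ in the equal-rank case.

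Second, and more seriously, the assertion that ``$\sigma_+$-stability just above $W$ implies Gieseker stability by the large-volume limit'' is not justified. The upper bound in \S\ref{sec-Gieseker} shows only that Gieseker-semistable sheaves remain $\sigma$-semistable above $W$; it does not rule out $\sigma_+$-stable objects that are \emph{not} Gieseker semistable (e.g.\ sheaves with torsion, or $\mu_H$-unstable sheaves). The paper closes this gap with two direct verifications in \S\ref{sec-curves}: that the extension $E$ is torsion-free (nontrivial when $r({\bf u})=0$, and relying on the minimality of $\overline\Delta_{H,D}({\bf w})$), and that $E$ is $\mu_{H,D}$-semistable (again using the extremality of ${\bf w}$). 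Only then does the observation in \S\ref{sssec-largevolume} apply: a $\mu_H$-semistable sheaf that is $\sigma$-stable for some $\sigma$ left of the vertical wall is $(H,D)$-Gieseker stable.

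A smaller point: you take $F$ stable, but condition \ref{cond-rankMaximal} may force ${\bf w}$ to admit only strictly polystable representatives (e.g.\ ${\bf w}=\ch(\OO_X^{\oplus 2})$ in \S\ref{sec-examples}). The paper therefore works with semisimple $F$ and needs Lemmas~\ref{lem-subobjects}--\ref{lem-groupAction} in place of the simpler ``non-split extension of two stables is stable'' argument you invoke.
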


There are two main steps to the proof of Theorem \ref{thm-main}.  First, we show that no actual wall for $M_{H,D}({\bf v})$ is larger than the wall $W({\bf w},{\bf v})$ given by ${\bf w}$.  This step is not too difficult; it will follow from a bound on higher rank walls and an asymptotic study of walls.  

Next, we prove this wall is the Gieseker wall and that the corresponding nef divisor lies on the boundary of the nef cone. Put ${\bf u} = {\bf v} - {\bf w}$.  We show that there are sheaves $F\in M_{H,D}({\bf w})$ and $Q\in M_{H,D}({\bf u})$ and curves in $\Ext^1(Q,F)$ such that the corresponding family of sheaves $E$ fitting as extensions $$0\to F\to E\to Q\to 0$$ are generically $(H,D)$-Gieseker stable and vary in moduli. Then the wall $W({\bf w},{\bf v})$ is the Gieseker wall since such sheaves $E$ are destabilized along it.  Furthermore, the corresponding curves in $M_{H,D}({\bf v})$ are orthogonal to the nef divisor given by the Gieseker wall, so the divisor is on the boundary of the nef cone.  This second part of the proof is fairly delicate, and primarily depends on computing the Gieseker wall for $M_{H,D}({\bf u})$ by induction on the rank.

\begin{remark}
Note that if ${\bf w}'$ is any character satisfying properties \ref{cond-rankBound}-\ref{cond-discriminantMinimal} in Definition \ref{def-extremal} (but not necessarily property \ref{cond-rankMaximal}), then the walls $W({\bf w},{\bf v})$ and $W({\bf w}',{\bf v})$ will coincide.  Property \ref{cond-rankMaximal} has been imposed to make the construction of orthogonal curves to the nef divisor as easy as possible.
\end{remark}

\subsection{The quotient character}\label{ssec-quotient} The definition of the extremal character ${\bf w}$ ensures that the moduli space $M_{H,D}({\bf w})$ is nonempty.  In the previous discussion we needed to know that the moduli space $M_{H,D}({\bf u})$ corresponding to the quotient character ${\bf u} = {\bf v} - {\bf w}$ is also nonempty.  We now address this point, and study ${\bf u}$ more closely.  There are two cases to consider, based on whether $r({\bf u})>0$ or $r({\bf u})=0$.

First assume $r({\bf u})>0$.   Note that $r({\bf u})$ and $\tilde \mu_H({\bf u})$ depend only on $r({\bf v})$ and $c_1({\bf v})$.  The class $c_1({\bf u})$ depends on the choice of $c_1({\bf w})$.  The relationship between $\overline\Delta_{H,D}({\bf u})$ and the other invariants is encoded in the identity \begin{align*}r({\bf v}) \overline\Delta_{H,D}({\bf v}) &= r({\bf w}) \overline\Delta_{H,D}({\bf w})+r({\bf u})\overline\Delta_{H,D}({\bf u})\\&\phantom{=}-\frac{r({\bf w})r({\bf u})}{2r({\bf v})}(\overline\mu_{H,D}({\bf w})-\overline\mu_{H,D}({\bf u}))^2.\end{align*} In particular, as $\overline\Delta_{H,D}({\bf v})\to \infty$ we find $\overline\Delta_{H,D}({\bf u})\to \infty$.  Therefore, if $\overline\Delta_{H,D}({\bf v})\gg 0$, Theorem \ref{thm-OGrady} applies to the moduli space $M_{H}({\bf u})$.  For instance, there are $\mu_{H,D}$-stable sheaves of character ${\bf u}$.

On the other hand, if $r({\bf u})  = 0$, then by \ref{cond-rankBound} we find that $c_1({\bf u})$ is effective. Let $C$ be an effective curve representing this class.  By \ref{cond-slopeClose}, the line bundle $\OO_X(C)$ has the smallest possible reduced slope $\tilde\mu_H(\OO_X(C))$ among all effective line bundles on $X$.  Therefore $C$ is reduced and irreducible, and the moduli space $M_{H,D}({\bf u})$ contains sheaves which are line bundles of the appropriate degree supported on $C$.

In either case, $M_{H,D}({\bf u})$ is nonempty and contains well-behaved points.

\subsection{Background on Farey sequences} The arguments in this paper rely on understanding the number theory which determines the slope $\tilde \mu_H({\bf w})$ of the exceptional character ${\bf w}$.  Recall that the \emph{(unrestricted) Farey sequence} $F_n$ of order $n$ consists of the ordered list of reduced fractions with denominator at most $n$.  For example,
$$F_6 = \left\{\ldots, -\frac{1}{6},\frac{0}{1}, \frac{1}{6}, \frac{1}{5}, \frac{1}{4}, \frac{1}{3},\frac{2}{5},\frac{1}{2},\frac{3}{5},\frac{2}{3},\frac{3}{4},\frac{4}{5},\frac{5}{6},\frac{1}{1},\frac{7}{6},\ldots\right\}$$
Thus the elements of $F_r$ are precisely the reduced slopes of stable vector bundles on $X$ of rank at most $r$.

Suppose that $$\frac{a}{b}<\frac{c}{d}$$ are \emph{Farey neighbors}, i.e. that they are adjacent terms in the Farey sequence $F_{\max\{b,d\}}$.  Then $$bc-ad=1.$$ The \emph{mediant} of two (reduced) rational numbers $\frac{a}{b}$ and $\frac{c}{d}$ is the rational number $$\frac{a+c}{b+d}.$$ If $\frac{a}{b}<\frac{c}{d}$ are Farey neighbors, then the mediant is already written in lowest terms.  Furthermore, the mediant is the \emph{unique} rational number in the interval $(\frac{a}{b},\frac{c}{d})$ with denominator at most $b+d$.  That is, the three terms $$\frac{a}{b},\frac{a+c}{b+d}, \frac{c}{d}$$ are adjacent in the Farey sequence $F_{b+d}$.

\begin{remark}\label{rem-Farey}
Here we explain how to use the Farey sequence to compute $\tilde \mu_H({\bf w})$ more explicitly.  Let $d = \tilde \mu_H(L)$ be the minimum reduced slope of an effective line bundle $L$ on $X$.  If $r({\bf v}) = 1$, then $\tilde \mu_H({\bf w}) = \tilde \mu_H({\bf v}) - d$.  If  $r({\bf v})\geq 2$, let $\alpha$ be the number in $F_{r({\bf v})}$ immediately preceding $\tilde \mu_H({\bf v})$.  If $\tilde \mu_{H}({\bf v})$ is not an integer, then the denominator of $\alpha$ is strictly less than $r({\bf v})$.  Therefore 
$$\tilde\mu_H({\bf w}) = \begin{cases} \alpha & \mbox{if $\tilde\mu_H({\bf v})\notin \Z$ or $d=1$} \\ \tilde\mu_H({\bf v})-\frac{1}{r({\bf v})-1} & \mbox{if $\tilde\mu_H({\bf v})\in \Z$ and $d>1$.}\end{cases}$$
\end{remark}

\subsection{Bogomolov inequalities} The extremal character ${\bf w}$ associated to ${\bf v}$ can be computed given the classification of stable Chern characters on $X$. For example, on $\PP^2$ it is easy to compute ${\bf w}$ from the Dr\'{e}zet-Le Potier classification (see \cite{CoskunHuizenga2} and \cite{LePotierLectures}). In particular, when $X= \P^2$, Theorem \ref{thm-main} specializes to the main theorem of \cite{CoskunHuizenga2}.

Conversely, suppose that $\Pic(X)=\Z H$ with $H$ effective.  In this case we can express Chern characters in terms of their rank $r$, slope $\mu = \tilde\mu_H$, and discriminant $\Delta = \Delta_{H,0}$. Fix a rank $r>0$ and slope $\mu\in \Q$ with $r\mu\in \Z$.  Let $\delta(r,\mu)$ be the minimal discriminant of a stable bundle $E$ such that $\mu(E) = \mu$ and $\rk(E) \leq r$.  Then the inequality $\Delta \geq \delta(r,\mu)$ is valid for any stable bundle with invariants $(r,\mu,\Delta)$, and typically improves the ordinary Bogomolov inequality.

\begin{corollary}\label{cor-Bogomolov}
Suppose $\Pic(X) = \Z H$ with $H$ effective.  Computing the Gieseker wall for all ${\bf v}$ with sufficiently large discriminant is equivalent to computing the function $\delta(r,\mu)$ for all $r>0$ and $\mu\in\Q$ with $r\mu\in \Z$.
\end{corollary}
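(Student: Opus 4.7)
The strategy is to exhibit mutual recoverability between $\delta(r,\mu)$ and the Gieseker walls, using Theorem~\ref{thm-main} to identify the Gieseker wall with $W({\bf w},{\bf v})$ and Remark~\ref{rem-Farey} to describe $\tilde\mu_H({\bf w})$. In the case $\Pic(X)=\Z H$ with $H$ effective, the three discriminants $\Delta_H$, $\Delta_{H,D}$, and $\overline\Delta_{H,D}$ all agree (a short computation since $D$ and $K_X$ are multiples of $H$), so we work with a single invariant $\Delta$. The minimum reduced slope of an effective line bundle is $d=1$, so Remark~\ref{rem-Farey} determines $\tilde\mu_H({\bf w})$ from $r({\bf v})$ and $\tilde\mu_H({\bf v})$ purely by Farey arithmetic.

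The key identity is $\Delta({\bf w})=\delta(r({\bf v}),\tilde\mu_H({\bf w}))$. For any $(H,D)$-semistable sheaf $E$ of slope $\tilde\mu_H({\bf w})$ with $r(E)\leq r({\bf v})$, the Jordan-H\"older factors $E_i$ are stable of the same slope, and the degenerate case of the identity from \S\ref{ssec-quotient} (equal slopes) yields the weighted average $r(E)\Delta(E)=\sum_i r(E_i)\Delta(E_i)$; hence $\Delta(E)\geq\min_i\Delta(E_i)\geq\delta(r({\bf v}),\tilde\mu_H({\bf w}))$. Conversely, any stable sheaf realizing $\delta$ produces a character satisfying \ref{cond-rankBound}-\ref{cond-stable} of Definition~\ref{def-extremal}, attaining the bound. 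This identity together with the Farey description of $\tilde\mu_H({\bf w})$ and the wall formulas in \S\ref{ssec-Bridgeland} immediately yields the forward implication $\delta\Rightarrow$ walls.

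For the reverse implication, from the wall $W({\bf w},{\bf v})$ we can solve for $\tilde\mu_H({\bf w})$ and $\Delta({\bf w})$ using the two center-and-radius equations, and the key identity identifies $\Delta({\bf w})$ with $\delta(r({\bf v}),\tilde\mu_H({\bf w}))$. To recover $\delta(r,\mu)$ for an arbitrary admissible pair, write $\mu=a/b$ in lowest terms, let $d_0\in\{1,\ldots,b-1\}$ be the unique solution of $ad_0\equiv -1\pmod b$ (the case $b=1$ is simpler), and use the Farey-neighbor relation $bc-ad=1$ to verify that whenever $r'\equiv d_0\pmod b$ and $r'\geq b$, the Farey successor of $\mu$ in $F_{r'}$ has denominator exactly $r'$. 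Then taking $r({\bf v})=r'$ and $\tilde\mu_H({\bf v})$ equal to this successor makes $(r',\mu)$ directly accessible from walls. For a target $(r,\mu)$ with $r=bk$, the smallest accessible $r'\geq r$ is $bk+d_0$, and $(r,r']$ contains no multiple of $b$ since $d_0<b$. The monotonicity of $\delta$ in its first argument then forces $\delta(r,\mu)=\delta(r',\mu)$, which is accessible.

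The main obstacle is the key identity $\Delta({\bf w})=\delta(r({\bf v}),\tilde\mu_H({\bf w}))$; this rests on the Jordan-H\"older weighted-average estimate and on verifying that a stable sheaf realizing $\delta$ gives a nonempty $M_{H,D}$, the latter being automatic since stability implies semistability. A secondary bookkeeping issue is the elementary Farey computation identifying the directly-accessible pairs and the monotonicity interpolation used to cover all admissible $(r,\mu)$.
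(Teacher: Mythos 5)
Your proof is correct and follows essentially the same route as the paper: the forward direction via Remark \ref{rem-Farey} together with the minimality conditions in Definition \ref{def-extremal}, and the reverse direction by constructing a test character ${\bf v}$ of rank $r'=bk+d_0$ whose slope is the Farey successor of $\mu$ in $F_{r'}$ --- which coincides with the paper's choice of the mediant $\frac{b+b'}{s+s'}$ of the Farey neighbors in $F_r$, merely described through the congruence $ad_0\equiv -1 \pmod{b}$ instead of through mediants. The details you make explicit (the Jordan--H\"older averaging identifying the semistable and stable minimal discriminants, and the step $\delta(r,\mu)=\delta(r',\mu)$ because $(r,r']$ contains no multiple of the denominator) are exactly what the paper leaves implicit in ``It then follows that $\Delta({\bf w})=\delta(r,\mu)$.''
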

\begin{proof}
If $\delta(r,\mu)$ is known, then it is straightforward to determine the character ${\bf w}$ from ${\bf v}$ using Remark \ref{rem-Farey}.  

Conversely, suppose the computation of the Gieseker wall is known.  Let $r>0$ and $\mu\in \Q$ with $c_1:=r\mu\in \Z$.  Write $\mu = \frac{b}{s}$ as a reduced fraction, so $r = ks$ for some integer $k>0$.  Define $\frac{b'}{s'}\in \Q$ by requiring $\frac{b}{s}<\frac{b'}{s'}$ to be Farey neighbors in the Farey sequence $F_r$. Then the mediant $$\mu':=\frac{b+b'}{s+s'}$$ has denominator $r':=s+s'>r$.  Let ${\bf v} = (r',\mu',\Delta')$.  Then the extremal Chern character ${\bf w}$ to ${\bf v}$ has slope $\mu$.  

What is the rank of ${\bf w}$?  Since ${\bf w}$ has slope $\mu$, we have $r({\bf w}) = ls$ for some $l>0$.  By definition, $s'\leq r$.  If $s' = r$, then since denominators of Farey neighbors are coprime we see that $\mu$ is an integer and $\delta(r,\mu) = 0$.  If instead $s'<r$, then $$(k+1)s = r+s>s'+s=r',$$ and thus $r({\bf w})\leq ks = r$.  It then follows that $\Delta({\bf w}) = \delta(r,\mu)$, computing $\delta(r,\mu)$.
\end{proof}

\begin{remark}
Note that the character ${\bf v}$ constructed in the proof of Corollary \ref{cor-Bogomolov} has coprime rank and first Chern class.  Thus the moduli space $M_H({\bf v})$ carries a universal family.  We conclude Corollary \ref{cor-BogIntro} holds as well.
\end{remark}

\section{Bounding the Gieseker wall}\label{sec-Gieseker}

In this section we show that if $\overline\Delta_{H,D}({\bf v})\gg 0$, then the Gieseker wall for $M_{H,D}({\bf v})$ in the $(H,D)$-slice is no larger than the wall $W:= W({\bf w},{\bf v})$ defined by the extremal Chern character ${\bf w}$ (see Definition \ref{def-extremal}).  Suppose $W'$ is a semicircular wall in the $(H,D)$-slice lying left of the vertical wall such that $W'$ is at least as large as $W$ and some $E\in M_{H,D}({\bf v})$ is destabilized along $W'$.  Let $\sigma_0$ be a stability condition on $W'$, and let $$0\to F' \to E \to Q'\to 0$$ be an exact sequence of $\sigma_0$-semistable objects of the same $\sigma_0$-slope which defines the wall $W'$.  

Let ${\bf w}' = \ch (F')$; then $W' = W({\bf w}', {\bf v})$.  We will show that if $\overline\Delta_{H,D}({\bf v})\gg 0$, then $\tilde \mu_{H}({\bf w'}) = \tilde \mu_H ({\bf w})$ and $\overline\Delta_{H,D}({\bf w}') = \overline\Delta_{H,D}({\bf w})$.  That is, the walls $W$ and $W'$ actually coincide, and the Gieseker wall is no larger than $W$.  

A now-standard argument gives some initial restrictions on $F'$.

\begin{lemma}\label{lem-slopeLess}
The object $F'$ is a nonzero torsion-free sheaf.  We have $\overline\mu_{H,D}({\bf w}')<\overline\mu_{H,D}({\bf v})$, and every Harder-Narasimhan factor of $F'$ has $(H,D)$-slope at most $\overline\mu_{H,D}({\bf v})$.
\end{lemma}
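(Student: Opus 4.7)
The plan is to reduce all three assertions to standard arguments about the short exact sequence $0\to F'\to E\to Q'\to 0$ in $\cA_\beta$, exploiting that $E$ is a torsion-free sheaf which is $\mu_{H,D}$-semistable.

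First I would show that $F'$ is a nonzero sheaf concentrated in degree zero.  Since $W'$ is a semicircular wall lying to the left of the vertical wall $\beta=\overline\mu_{H,D}({\bf v})$, every stability condition $\sigma_0=\sigma_{\beta,\alpha}$ on $W'$ satisfies $\beta<\overline\mu_{H,D}({\bf v})$.  Consequently $E\in\cT_\beta$ and, in particular, $H^{-1}(E)=0$.  The long exact sequence of cohomology sheaves associated to the triangle in $\cA_\beta$ then forces $H^{-1}(F')=0$, so $F'=H^0(F')$ is an honest coherent sheaf lying in $\cT_\beta$.  It must be nonzero, since otherwise the destabilizing sequence collapses.

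Next I would check that $F'$ is torsion-free.  Let $T\subset F'$ be its maximal torsion subsheaf.  Every torsion sheaf lies in $\cT_\beta$ (any quotient has $\overline\mu_{H,D}=+\infty>\beta$), so $T$ is a subobject of $F'$ in $\cA_\beta$, and the composition $T\hookrightarrow F'\hookrightarrow E$ is an injection in $\cA_\beta$.  Because $T$ and $E$ are both concentrated in degree zero, this injection in the tilt is literally an inclusion of subsheaves.  As $E$ has positive rank and was assumed pure, it is torsion-free, so $T=0$.

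Finally the slope inequalities drop out of $\mu_{H,D}$-semistability of $E$.  Since $F'$ is a nonzero torsion-free subsheaf of $E$, we have $\overline\mu_{H,D}({\bf w}')\leq\overline\mu_{H,D}({\bf v})$.  By the wall classification recalled in \S\ref{ssec-Bridgeland}, a numerical wall between two positive-rank classes is semicircular only when their modified slopes differ; as $W'$ is semicircular this forces $\overline\mu_{H,D}({\bf w}')\neq\overline\mu_{H,D}({\bf v})$, and hence the inequality is strict.  For the Harder--Narasimhan claim, the first HN factor $F_1$ of $F'$ is a subsheaf of $F'$, and therefore of $E$, so semistability of $E$ gives $\overline\mu_{H,D}(F_1)\leq\overline\mu_{H,D}({\bf v})$; the remaining HN factors of $F'$ have strictly smaller slope by construction, yielding the bound for every factor.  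The only step that demands a moment of care, rather than a genuine obstacle, is the torsion-free one: one must check that an inclusion in $\cA_\beta$ between two sheaves is an inclusion of sheaves, which follows from the fact that the embedding $\cT_\beta\hookrightarrow\cA_\beta$ preserves monomorphisms.  Everything else is immediate.
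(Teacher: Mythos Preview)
Your torsion-free argument is essentially fine (though the phrase ``$\cT_\beta\hookrightarrow\cA_\beta$ preserves monomorphisms'' is the wrong direction---what you actually need and use is that a monomorphism in $\cA_\beta$ between two objects of $\cT_\beta$ has trivial sheaf kernel, since that kernel would have to live in $\cF_\beta$ and be torsion).

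The genuine gap is in the last paragraph.  You repeatedly assert that $F'$ is a \emph{subsheaf} of $E$, and then use $\mu_H$-semistability of $E$ to bound the slopes of $F'$ and of its first HN factor $F_1$.  But $F'\to E$ is only a monomorphism in $\cA_\beta$; as a map of sheaves it has kernel $K:=H^{-1}(Q')$, which lies in $\cF_\beta$ and need not vanish.  (Indeed, the whole point of the higher-rank bound in Lemma~\ref{lem-higherRank} and Proposition~\ref{prop-injective} is to control the situation when $K\neq 0$.)  So neither $F'$ nor its HN subsheaf $F_1$ is, a priori, a subsheaf of $E$, and the semistability argument collapses.

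The paper's proof avoids this by working through $K$ rather than around it.  For torsion-freeness, it observes that $F'$ is an extension of a subsheaf of $E$ by $K$, and both are torsion-free.  For the HN bound, if some stable subsheaf $F_1\subset F'$ had $\overline\mu_{H,D}(F_1)>\overline\mu_{H,D}({\bf v})$, then the composite $F_1\to F'\to E$ would have to vanish (a stable sheaf of larger slope cannot map nontrivially to a semistable sheaf), forcing $F_1\hookrightarrow K$; but $K\in\cF_\beta$ then gives $\overline\mu_{H,D}(F_1)\leq\beta<\overline\mu_{H,D}({\bf v})$, a contradiction.  The strict inequality $\overline\mu_{H,D}({\bf w}')<\overline\mu_{H,D}({\bf v})$ then follows as you say, since equality would make $W'$ the vertical wall.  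To repair your argument, you should route the HN step through $K$ in this way rather than through a nonexistent inclusion $F'\subset E$.
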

\begin{proof}
Fix a category $\cA_\beta$ such that some point $(\beta,\alpha)$ is on the wall $W'$.  Taking cohomology sheaves, since $\rH^{-1}(E)=0$ we find that $F'$ is a sheaf in $\cT_\beta$; it is nonzero since the wall $W^\dagger$ is not the whole slice.  Since $K:=\rH^{-1}(Q')$ and $E$ are torsion free, so is $F'$.  If $F'$ has an $(H,D)$-Gieseker stable subsheaf $F_1$ with $\overline \mu_{H,D}(F_1) > \overline\mu_{H,D}({\bf v})$, then $F_1$ is a subsheaf of $K$, which violates $K\in \cF_\beta$ since $E\in \cT_\beta$.  Finally, if $\overline\mu_{H,D}({\bf w}')=\overline\mu_{H,D}({\bf v})$, then $W'$ contains the vertical wall, which again is a contradiction.  Therefore $\overline\mu_{H,D}({\bf w}')<\overline\mu_{H,D}({\bf v})$.
\end{proof}

We next recall a lemma which first appeared in \cite{CoskunHuizenga2} for $\P^2$ and was later generalized in \cite{Betal}.  
\begin{lemma}[{\cite[Lemma 3.1]{Betal}}]\label{lem-higherRank}
With the notation and hypotheses of this section, if the map $F'\to E$ of sheaves is not injective, then the radius $\rho_{W'}$ of the wall $W'$ satisfies
 $$\rho_{W'}^2 \leq \frac{(\min\{r({\bf w}') -1,r({\bf v})\})^2}{2r({\bf w}')}\overline\Delta_{H,D}({\bf v}).$$ \end{lemma}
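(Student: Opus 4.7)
The plan is to exploit the noninjectivity of $F' \to E$ by taking cohomology sheaves to produce a genuine subsheaf of $E$ whose rank is bounded by $\min\{r({\bf w}')-1,r({\bf v})\}$, and then to combine slope-semistability with Bogomolov's inequality to read the radius bound off the wall equation.

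Applying the cohomology sheaf functor to the destabilizing sequence $0 \to F' \to E \to Q' \to 0$ in $\cA_\beta$, since $E$ is a torsion-free sheaf in $\cT_\beta$ with $\rH^{-1}(E)=0$, we obtain a four-term exact sequence of sheaves
$$0 \to K \to F' \to E \to T \to 0,$$
where $K = \rH^{-1}(Q') \in \cF_\beta$ is saturated in $F'$ and $T = \rH^0(Q') \in \cT_\beta$. The noninjectivity hypothesis is precisely $K \neq 0$, so $r(K)\geq 1$. Set $F'' := F'/K$, a subsheaf of $E$. Then
$$r(F'') = r({\bf w}') - r(K) \leq r({\bf w}') - 1,$$
and $r(F'') \leq r({\bf v})$ from $F'' \hookrightarrow E$, so $r(F'') \leq m := \min\{r({\bf w}') - 1, r({\bf v})\}$. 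The $\mu_{H,D}$-semistability of $E$ forces $\overline\mu_{H,D}(F'') \leq \overline\mu_{H,D}({\bf v})$, and Bogomolov's inequality (applied through the slope-semistable subquotients of $F''$ and $K$) gives $\overline\Delta_{H,D}(F''),\;\overline\Delta_{H,D}(K) \geq 0$.

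To extract the radius bound I would feed these inputs into the wall equation. On $W' = W({\bf w}',{\bf v})$ the equality $\nu_{\sigma_{\beta,\alpha}}(F') = \nu_{\sigma_{\beta,\alpha}}({\bf v})$ expresses $\rho_{W'}^2$ as a quadratic function of $\overline\Delta_{H,D}({\bf w}')$, $\overline\Delta_{H,D}({\bf v})$ and the slopes. Using $\ch({\bf w}') = \ch(F'') + \ch(K)$ together with the standard additivity identity
$$r({\bf w}')\overline\Delta_{H,D}({\bf w}') = r(F'')\overline\Delta_{H,D}(F'') + r(K)\overline\Delta_{H,D}(K) - \tfrac{r(F'')r(K)}{2\,r({\bf w}')}\bigl(\overline\mu_{H,D}(F'') - \overline\mu_{H,D}(K)\bigr)^2,$$
the wall equation becomes an inequality in which only the discriminants and slopes of $F''$ and $K$ appear. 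Discarding the nonnegative Bogomolov terms and using $r(F'') \leq m$ together with $\overline\mu_{H,D}(F'') \leq \overline\mu_{H,D}({\bf v})$ and $\overline\mu_{H,D}(K) \leq \beta$ should yield the stated inequality $\rho_{W'}^2 \leq \frac{m^2}{2\,r({\bf w}')}\overline\Delta_{H,D}({\bf v})$.

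The main obstacle is the algebraic bookkeeping in the last step: the wall equation must be rearranged precisely enough that the Bogomolov bounds on $F''$ and $K$, the rank inequality $r(F'')\leq m$, and the slope comparisons combine into exactly the coefficient $m^2/(2r({\bf w}'))$. The bound should be tight in the worst case where $r(F'')=m$, $\overline\Delta_{H,D}(F'')=0$ and $\overline\mu_{H,D}(F'')=\overline\mu_{H,D}({\bf v})$, and this tightness suggests the right way to organize the manipulation; as a sanity check I would specialize to $r({\bf w}')=2$, $r(K)=1$ to confirm that the estimate reduces to the rank-two version used previously for $\mathbb{P}^2$.
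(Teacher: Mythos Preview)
The paper does not prove this lemma; it is quoted verbatim from \cite{Betal}, so there is no in-paper argument to compare against.  Nonetheless, let me comment on your approach.

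Your setup is correct and matches the standard argument: passing to cohomology sheaves of the destabilizing triangle produces a genuine subsheaf $F'' = F'/K \hookrightarrow E$ with $r(F'') \leq m := \min\{r({\bf w}')-1,\,r({\bf v})\}$ and $\overline\mu_{H,D}(F'') \leq \overline\mu_{H,D}({\bf v})$.  The gap is in the step where you assert $\overline\Delta_{H,D}(F'') \geq 0$ and $\overline\Delta_{H,D}(K)\geq 0$ ``through the slope-semistable subquotients.''  The very additivity identity you wrote down,
\[
r({\bf w}')\overline\Delta_{H,D}({\bf w}') = r(F'')\overline\Delta_{H,D}(F'') + r(K)\overline\Delta_{H,D}(K) - \tfrac{r(F'')r(K)}{2r({\bf w}')}\bigl(\overline\mu_{H,D}(F'')-\overline\mu_{H,D}(K)\bigr)^2,
\]
has a \emph{minus} sign on the cross term, so Bogomolov on the Harder--Narasimhan factors gives only an \emph{upper} bound on the discriminant of an extension, not a lower bound.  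Neither $F''$ nor $K$ is known to be $\mu$-semistable, and a sheaf such as $\OO_X \oplus \OO_X(-nH)$ shows that a non-semistable torsion-free sheaf can have arbitrarily negative discriminant.  So the ``discard the nonnegative Bogomolov terms'' step is not justified as written.

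The usual argument avoids this by applying Bogomolov to $F'$ itself: since $F'$ is $\sigma_0$-semistable, the Bogomolov-type inequality for Bridgeland-semistable objects gives $\overline\Delta_{H,D}({\bf w}')\geq 0$ directly, hence $\rho_{W'}^2 \leq (\overline\mu_{H,D}({\bf w}')-s_{W'})^2$.  One then bounds $\overline\mu_{H,D}({\bf w}')-s_{W'}$ via imaginary parts: from $K\in\cF_\beta$ one has $\Im Z_\beta(K)\leq 0$, so $\Im Z_\beta(F') \leq \Im Z_\beta(F'')$, and the slope and rank constraints on $F''$ give $\Im Z_\beta(F'') \leq m\cdot H^2(\overline\mu_{H,D}({\bf v})-\beta)$.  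Combining these at $\beta = s_{W'}$ and using $(\overline\mu_{H,D}({\bf v})-s_{W'})^2 = \rho_{W'}^2 + 2\overline\Delta_{H,D}({\bf v})$ yields the linear bound on $\rho_{W'}^2$.  The point is that the discriminant decomposition of ${\bf w}'$ in terms of $F''$ and $K$ is never needed; only the \emph{slope} decomposition (equivalently, the imaginary parts) is used, together with Bogomolov applied to the one object that is actually known to be semistable.
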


The lemma allows us to show the map of sheaves $F'\to E$ is injective once $\overline\Delta_{H,D}({\bf v})$ is sufficiently large.  This provides a restriction on the ranks of subobjects.

\begin{proposition}\label{prop-injective}
If $\overline\Delta_{H,D}({\bf v})\gg 0$, then the map $F' \to E$ of sheaves is injective.  In particular, $0 < r({\bf w'}) \leq r({\bf v})$.  Furthermore, in case $r({\bf w'}) = r({\bf v})$, the induced map on line bundles $\det F' \to \det E$ is an injection, and therefore $c_1({\bf v}) - c_1({\bf w'})$ is effective.
\end{proposition}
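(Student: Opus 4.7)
The plan is to use Lemma \ref{lem-higherRank} to rule out the possibility that the sheaf map $F' \to E$ fails to be injective once $\overline\Delta_{H,D}({\bf v})$ is sufficiently large; the remaining statements about ranks and first Chern classes will then follow by standard arguments. The core idea is to compare the growth rate of $\rho_W^2$, which is a fixed function of $\overline\Delta_{H,D}({\bf v})$ determined by our choice of ${\bf w}$ and grows quadratically as $\overline\Delta_{H,D}({\bf v}) \to \infty$, against the bound on $\rho_{W'}^2$ supplied by Lemma \ref{lem-higherRank}, which is only linear in $\overline\Delta_{H,D}({\bf v})$.

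First I would compute the asymptotics of $\rho_W^2$. Since $r({\bf v})$ and $c_1({\bf v})$ are fixed and ${\bf w}$ depends only on these, the numbers $\overline\mu_{H,D}({\bf w})$, $\overline\mu_{H,D}({\bf v})$, and $\overline\Delta_{H,D}({\bf w})$ are all constants. Writing $\Delta\mu := \overline\mu_{H,D}({\bf v}) - \overline\mu_{H,D}({\bf w}) > 0$, the explicit wall formulas from \S \ref{ssec-Bridgeland} give
\[
s - \overline\mu_{H,D}({\bf v}) = -\frac{1}{2}\Delta\mu - \frac{\overline\Delta_{H,D}({\bf v}) - \overline\Delta_{H,D}({\bf w})}{\Delta\mu},
\]
so that $\rho_W^2 = (s-\overline\mu_{H,D}({\bf v}))^2 - 2\overline\Delta_{H,D}({\bf v}) = \frac{\overline\Delta_{H,D}({\bf v})^2}{\Delta\mu^2} + O(\overline\Delta_{H,D}({\bf v}))$. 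Next I would verify that the coefficient $\frac{(\min\{r({\bf w}')-1, r({\bf v})\})^2}{2r({\bf w}')}$ appearing in Lemma \ref{lem-higherRank} is bounded above by a constant $C$ depending only on $r({\bf v})$, uniformly in $r({\bf w}')$; this follows by separately handling $r({\bf w}') \leq r({\bf v}) + 1$ (where the quantity is an increasing function of $r({\bf w}')$) and $r({\bf w}') > r({\bf v}) + 1$ (where one has the obvious bound $r({\bf v})^2/(2r({\bf w}'))$).

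If $F' \to E$ were not injective, Lemma \ref{lem-higherRank} would then yield $\rho_{W'}^2 \leq C \cdot \overline\Delta_{H,D}({\bf v})$, linear in $\overline\Delta_{H,D}({\bf v})$. But by hypothesis $\rho_{W'} \geq \rho_W$, which contradicts the quadratic lower bound on $\rho_W^2$ once $\overline\Delta_{H,D}({\bf v}) \gg 0$. Hence $F' \to E$ is injective, so $F'$ sits inside $E$ as a subsheaf and $r({\bf w}') \leq r({\bf v})$; positivity of the rank follows from the assertion in Lemma \ref{lem-slopeLess} that $F'$ is a nonzero torsion-free sheaf.

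Finally, if $r({\bf w}') = r({\bf v})$, then the inclusion $F' \hookrightarrow E$ is generically an isomorphism, so the induced map on top exterior powers $\det F' \to \det E$ is a nonzero and therefore injective morphism of line bundles. Writing $\det E \cong \det F' \otimes \OO_X(D)$ for an effective divisor $D$, we conclude $c_1({\bf v}) - c_1({\bf w}') = [D]$ is effective. The main technical point I anticipate is ensuring the asymptotic comparison is genuinely uniform across the a priori unbounded parameter $r({\bf w}')$, but this is precisely what the uniform bound on the coefficient in Lemma \ref{lem-higherRank} provides; once in hand, the rest of the argument is routine.
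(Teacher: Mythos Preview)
Your proposal is correct and follows essentially the same approach as the paper: compare the quadratic growth of $\rho_W^2$ in $\overline\Delta_{H,D}({\bf v})$ against the linear bound on $\rho_{W'}^2$ from Lemma \ref{lem-higherRank}, using that the coefficient $\frac{(\min\{r({\bf w}')-1,r({\bf v})\})^2}{2r({\bf w}')}$ is uniformly bounded independent of $r({\bf w}')$. Your treatment is slightly more explicit than the paper's (you spell out the asymptotic computation and the determinant argument, which the paper leaves implicit), but the strategy is identical.
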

\begin{proof}
We compare the radius of $W$ with the bound on $\rho_{W'}^2$ in Lemma \ref{lem-higherRank}.  The center $(s_{W},0)$ and radius $\rho_{W}$ of $W$ satisfy
 \begin{align*}
 s_{W} &= \frac{\overline\mu_{H,D}({\bf v})+\overline\mu_{H,D}({\bf w})}{2}-\frac{\overline\Delta_{H,D}({\bf v})-\overline\Delta_{H,D}({\bf w})}{\overline\mu_{H,D}({\bf v})-\overline\mu_{H,D}({\bf w})}\\
 \rho_{W}^2 &= (\overline\mu_{H,D}({\bf v})-s_W)^2-2\overline\Delta_{H,D}({\bf v}).
 \end{align*}
 Therefore $\rho_{W}^2$ grows quadratically as a function of $\overline\Delta_{H,D}({\bf v})$. Let $$C=\max \left\{ \frac{(\min\{r'-1,r({\bf v})\})^2 }{2r'}:r'\in \N_{> 0}\right\}.$$ By Lemma \ref{lem-higherRank}, if the map $F'\to E$ is not injective, then $\rho_{W'}^2$ is bounded by $C\cdot\overline\Delta_{H,D}({\bf v})$.  Since $W'$ is at least as large as $W$, we conclude that if $\overline\Delta_{H,D}({\bf v})$ is sufficiently large, then $F'\to E$ is injective.
\end{proof}

Having restricted the rank $r({\bf w}')$, we next turn to restricting the slope $\tilde \mu_H({\bf w}')$ and discriminant $\overline\Delta_{H,D}({\bf w}')$.   We begin with a simple observation.

\begin{lemma}
Let $(x_W,0)$ be the right endpoint of the wall $W$, so that $x_W = s_{W} + \rho_{W}$.  Then $x_W$ is increasing as a function of $\overline\Delta_{H,D}({\bf v})$, and $$\lim_{\overline\Delta_{H,D}\to \infty} x_W = \overline\mu_{H,D}({\bf w}).$$
\end{lemma}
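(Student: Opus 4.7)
The plan is to reduce both parts to a one-variable calculus exercise in $t := \overline\Delta_{H,D}({\bf v})$, since (as noted earlier in the paper) the quantities $a := \overline\mu_{H,D}({\bf v}) - \overline\mu_{H,D}({\bf w}) > 0$ and $\Delta_w := \overline\Delta_{H,D}({\bf w}) \geq 0$ depend only on the fixed data $r({\bf v})$, $c_1({\bf v})$, $X$, $H$, $D$. Substituting the formulas for $s_W$ and $\rho_W^2$ recalled in the excerpt, I get $\overline\mu_{H,D}({\bf v}) - s_W = p/(2a)$ with $p := a^2 + 2t - 2\Delta_w$, and $\rho_W^2 = (p^2 - 8a^2 t)/(4a^2)$. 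Taking the square root and using $x_W = s_W + \rho_W$, this gives the key formula
$$\overline\mu_{H,D}({\bf v}) - x_W \;=\; \frac{p - \sqrt{p^2 - 8a^2 t}}{2a}.$$

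The clean step I would use is the algebraic identity
$$p^2 - 8a^2 t \;=\; (p - 2a^2)^2 - 8a^2\Delta_w,$$
verified by direct expansion, which makes Bogomolov's inequality $\Delta_w\geq 0$ appear naturally. Setting $u := p - 2a^2 = 2t - a^2 - 2\Delta_w$ (a strictly increasing linear function of $t$), the formula becomes
$$\overline\mu_{H,D}({\bf v}) - x_W \;=\; \frac{(u + 2a^2) - \sqrt{u^2 - 8a^2\Delta_w}}{2a}.$$
Both parts of the lemma now fall out. For the limit, I would expand $\sqrt{u^2 - 8a^2\Delta_w} = u - 4a^2\Delta_w/u + O(u^{-3})$ as $t\to\infty$, so the numerator tends to $2a^2$, giving $\overline\mu_{H,D}({\bf v}) - x_W \to a$ and hence $x_W\to \overline\mu_{H,D}({\bf w})$. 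For monotonicity, I would differentiate with respect to $u$:
$$\frac{d}{du}\bigl((u + 2a^2) - \sqrt{u^2 - 8a^2\Delta_w}\bigr) \;=\; 1 - \frac{u}{\sqrt{u^2 - 8a^2\Delta_w}} \;\leq\; 0,$$
since $u^2 \geq u^2 - 8a^2\Delta_w$ by $\Delta_w\geq 0$ (which holds by Bogomolov applied to ${\bf w}$, using condition \ref{cond-stable} of Definition \ref{def-extremal}). Thus $\overline\mu_{H,D}({\bf v}) - x_W$ is non-increasing in $t$, and so $x_W$ is non-decreasing as a function of $\overline\Delta_{H,D}({\bf v})$.

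There is essentially no obstacle here: the only insight required is noticing the identity above, which transfers the Bogomolov inequality on ${\bf w}$ directly into the monotonicity of $x_W$; without this substitution one can still differentiate the original formula in $t$, but then the reason the sign works out becomes less transparent.
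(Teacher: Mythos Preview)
Your argument is correct and reaches the same conclusion, but the route is genuinely different from the paper's. The paper argues geometrically in two lines: since ${\bf w}$ is fixed while $\overline\Delta_{H,D}({\bf v})$ varies, the walls $W({\bf w},{\bf v})$ are all numerical walls for ${\bf w}$ and hence nested; the formula for $s_W$ shows the centers decrease to $-\infty$ as $\overline\Delta_{H,D}({\bf v})$ grows, so the walls get larger and their right endpoints increase toward the vertical wall $\beta=\overline\mu_{H,D}({\bf w})$. Your approach trades this appeal to the nesting of Bridgeland walls for a self-contained calculus computation, which is a fair exchange---your substitution $u=p-2a^2$ and the identity $p^2-8a^2t=u^2-8a^2\Delta_w$ are a nice way to make the role of the Bogomolov inequality for ${\bf w}$ explicit.

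One small gap to patch: the step ``$1-u/\sqrt{u^2-8a^2\Delta_w}\leq 0$ since $u^2\geq u^2-8a^2\Delta_w$'' tacitly uses $u>0$; for $u<0$ the inequality reverses. You should remark that $u=2t-a^2-2\Delta_w>0$ once $t$ is sufficiently large, which is the only regime in which the lemma is invoked. (Relatedly, when $\Delta_w=0$ your own formula gives $x_W\equiv\overline\mu_{H,D}({\bf w})$ for $u>0$, so the correct conclusion is ``non-decreasing''; the paper's word ``increasing'' should be read in this weak sense as well.)
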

\begin{proof}
The walls $W = W({\bf w},{\bf v})$ are a family of numerical walls for ${\bf w}$, so they are all nested.  The formula for $s_{W}$ shows that the centers decrease (and tend to $-\infty$) as $\overline\Delta_{H,D}({\bf v})$ increases.  Correspondingly, the walls become larger and $x_W$ increases.  As the walls become arbitrarily large, they come arbitrarily close to the vertical wall $\beta = \overline\mu_{H,D}({\bf w})$, and the limit follows.
\end{proof}

We now complete the proof of the main theorem in this section.

\begin{theorem}\label{thm-GiesekerBound}
If $\overline\Delta_{H,D}({\bf v})\gg 0$, then $W'=W$.  Thus the Gieseker wall for $M_{H,D}({\bf v})$ is no larger than $W$.
\end{theorem}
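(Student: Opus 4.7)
The plan is to force the numerical invariants of ${\bf w}'$ to coincide with those of ${\bf w}$ once $\overline\Delta_{H,D}({\bf v})$ is large, from which $W'=W$ is immediate.

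Lemma \ref{lem-slopeLess} and Proposition \ref{prop-injective} already provide the preliminary structure: $F'\to E$ is an injection of torsion-free sheaves, $0<r({\bf w}')\leq r({\bf v})$ with $c_1({\bf v})-c_1({\bf w}')$ effective in the equal-rank case, and $\overline\mu_{H,D}({\bf w}')<\overline\mu_{H,D}({\bf v})$. Thus ${\bf w}'$ satisfies \ref{cond-rankBound}, and the maximality of $\tilde\mu_H({\bf w})$ built into \ref{cond-slopeClose} gives the upper bound $\tilde\mu_H({\bf w}')\leq \tilde\mu_H({\bf w})$ at once.

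The heart of the proof is the matching lower bound on the slope. Since by property (3) of Bridgeland walls the destabilizing sequence $0\to F'\to E\to Q'\to 0$ persists throughout $W'$, the torsion-free sheaf $F'$ lies in $\cT_\beta$ for every $\beta\in (s_{W'}-\rho_{W'},x_{W'})$; equivalently, $\overline\mu_{\min}(F')>\beta$ for all such $\beta$, whence $\overline\mu_{\min}(F')\geq x_{W'}$ by taking the supremum. Combining with $\overline\mu_{H,D}({\bf w}')\geq \overline\mu_{\min}(F')$ and $x_{W'}\geq x_W$ (as $W\subseteq W'$ as nested semicircles), I obtain
\[
\overline\mu_{H,D}({\bf w}')\;\geq\; \overline\mu_{\min}(F')\;\geq\; x_{W'}\;\geq\; x_W.
\]
The preceding lemma says $x_W\to \overline\mu_{H,D}({\bf w})$ from below as $\overline\Delta_{H,D}({\bf v})\to\infty$. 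Since $r({\bf w}')\leq r({\bf v})$, the reduced slope $\tilde\mu_H({\bf w}')$ lies in the discrete Farey set $F_{r({\bf v})}$ whose gap near $\tilde\mu_H({\bf w})$ is bounded below by $1/r({\bf v})^2$. For $\overline\Delta_{H,D}({\bf v})$ sufficiently large this forces $\tilde\mu_H({\bf w}')\geq \tilde\mu_H({\bf w})$, and combined with the upper bound gives equality.

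Once slopes agree, the same discreteness reasoning applied to each HN factor of $F'$ (each of rank at most $r({\bf v})$) forces every HN factor to share the slope $\overline\mu_{H,D}({\bf w})$, so $F'$ is actually $\mu_{H,D}$-semistable. To pin down the discriminant, I would choose a Mumford-Jordan-H\"older factor $G$ of $F'$ of minimum discriminant; the additivity $\overline\Delta_{H,D}(F')=\sum (r_i/r)\overline\Delta_{H,D}(G_i)$ for semistable $F'$ with factors of equal slope yields $\overline\Delta_{H,D}(G)\leq \overline\Delta_{H,D}({\bf w}')$. The character $\ch(G)$ satisfies \ref{cond-rankBound}--\ref{cond-stable}: the equal-rank case of \ref{cond-rankBound} only arises when $G=F'$, which inherits effectivity from ${\bf w}'$, while \ref{cond-stable} is automatic since $G$ is Mumford-stable, hence Gieseker-stable. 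Then \ref{cond-discriminantMinimal} delivers $\overline\Delta_{H,D}(G)\geq \overline\Delta_{H,D}({\bf w})$, giving $\overline\Delta_{H,D}({\bf w}')\geq \overline\Delta_{H,D}({\bf w})$. The reverse inequality comes from $W'\supseteq W$ and the wall-center formula: with the slope fixed, smaller discriminant produces a strictly larger nested wall. Both bounds together yield $\overline\Delta_{H,D}({\bf w}')=\overline\Delta_{H,D}({\bf w})$, so $W'=W$. The main obstacle I foresee is making the discreteness arguments quantitatively uniform: one must combine the convergence rate $x_W\to \overline\mu_{H,D}({\bf w})$ with the Farey-gap lower bound $1/r({\bf v})^2$ to extract a single threshold on $\overline\Delta_{H,D}({\bf v})$, and verify carefully that the chosen Mumford-JH factor $G$ really does satisfy \ref{cond-rankBound} in the equal-rank edge case.
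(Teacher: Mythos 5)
Your argument is correct and follows essentially the same route as the paper's proof: Proposition \ref{prop-injective} together with the extremality of $\tilde\mu_H({\bf w})$ gives the upper slope bound, the containment of $W$ in $W'$ plus the discreteness of $\overline\mu_{H,D}$-slopes of characters of rank at most $r({\bf v})$ gives the matching lower bound, and the wall-center formula combined with discriminant minimality pins down $\overline\Delta_{H,D}({\bf w}')$. The only cosmetic difference is in the last step: the paper notes that $F'$, being $\sigma_0$-semistable and $\mu_{H,D}$-semistable, is itself $(H,D)$-Gieseker semistable by the large-volume-limit comparison, so condition \ref{cond-discriminantMinimal} applies directly to ${\bf w}'$ without passing to a Mumford--Jordan--H\"older factor and the attendant additivity of discriminants.
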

\begin{proof}
Suppose $\overline\Delta_{H,D}({\bf v})$ is large enough that 
\begin{enumerate}
\item Proposition \ref{prop-injective} holds, and
\item $x_{W}$ is sufficiently close to $\overline\mu_{H,D}({\bf w})$ that no number in the interval $(x_W,\overline\mu_{H,D}({\bf w}))$ is the $\overline\mu_{H,D}$-slope of a Chern character of rank at most $r({\bf v})$.
\end{enumerate}
Since $W'$ is at least as large as $W$, the sheaf $F'$ lies in $\cT_{x_W}$. By Lemma \ref{lem-slopeLess}, we have $$\overline\mu_{H,D}({\bf w}') \in (x_W,\overline\mu_{H,D}({\bf v})).$$  More precisely, since $r({\bf w}')\leq r({\bf v})$ we actually have $$\overline\mu_{H,D}({\bf w}')\in [\overline\mu_{H,D}({\bf w}),\overline\mu_{H,D}({\bf v})).$$ Since we know that $c_1({\bf v})-c_1({\bf w}')$ is effective in case $r({\bf w}') = r({\bf v})$, we conclude from the definition of $\overline\mu_{H,D}({\bf w})$ that $\overline\mu_{H,D}({\bf w'}) = \overline\mu_{H,D}({\bf w})$.

The sheaf $F'$ is also $\overline\mu_{H,D}$-semistable, for if $F'$ has a quotient sheaf of smaller slope, then $F'$ is not in $\cT_{x_W}$ by construction.  Since $F'$ is $\sigma_0$-semistable, it is also $(H,D)$-Gieseker semistable by \S\ref{sssec-largevolume}  The formula for the center of a wall and the assumption that $W'$ is at least as large as $W$ implies $\overline\Delta_{H,D}({\bf w}') \leq \overline\Delta_{H,D}({\bf w})$. By the minimality of $\overline\Delta_{H,D}({\bf w})$, we conclude $\overline\Delta_{H,D}({\bf w}') = \overline\Delta_{H,D}({\bf w}).$
\end{proof}

\section{Nesting walls}\label{sec-nesting}

Let ${\bf w}$ denote the extremal Chern character from Definition \ref{def-extremal}. In the next section we will prove that $W  = W({\bf w},{\bf v})$ is actually the Gieseker wall for $M_{H,D}({\bf v})$ by producing $(H,D)$-Gieseker stable sheaves which are destabilized along $W$.  The main ingredient in this construction will be the inductive computation of the Gieseker wall for the moduli space $M_{H,D}({\bf u})$ corresponding to the quotients, which we address here.  Recall that ${\bf u} = {\bf v}-{\bf w}$.

\begin{proposition}\label{prop-nesting}
Assume Theorem \ref{thm-main} holds for characters of (positive) rank less than $r({\bf v})$. If $\overline\Delta_{H,D}({\bf v})\gg 0$, then the Gieseker wall for $M_{H,D}({\bf u})$ in the $(H,D)$-slice is nested properly inside $W$.  

(If the rank $r({\bf u})$ is zero, it may happen that every $E\in M_{H,D}({\bf u})$ is semistable everywhere in the $(H,D)$-slice.  In this case we consider the Gieseker wall to be empty and the result is vacuous.) 
\end{proposition}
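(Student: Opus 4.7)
The plan is to compare $W = W({\bf w},{\bf v})$ and $W_u := W({\bf w}_u, {\bf u})$ directly as numerical walls and establish strict nesting $W_u \subsetneq W$. Once this is done, applying Theorem \ref{thm-GiesekerBound} to ${\bf u}$ bounds the Gieseker wall of $M_{H,D}({\bf u})$ above by $W_u$, and the proposition follows. I would handle the torsion case $r({\bf u}) = 0$ separately: by \S\ref{ssec-quotient}, ${\bf u}$ is the class of a line bundle on a reduced irreducible curve $C$, and the minimality of the reduced slope of $c_1({\bf u})$ among effective classes restricts possible destabilizing subobjects in $\mathcal{A}_\beta$, so that either no actual Gieseker wall exists (vacuous) or any such wall is strictly smaller than $W$.

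For $r({\bf u}) > 0$ both walls are semicircular and lie to the left of the vertical wall at $\beta = \overline\mu_{H,D}({\bf u})$, so $W_u \subsetneq W$ iff $s_{W_u} > s_W$. I would compute both centers using the formula in \S\ref{ssec-Bridgeland}, substitute the identity of \S\ref{ssec-quotient} to get $\overline\Delta_{H,D}({\bf u}) = (r({\bf v})/r({\bf u})) \overline\Delta_{H,D}({\bf v}) + O(1)$, and collect terms. The leading behavior of $s_{W_u} - s_W$ in $\overline\Delta_{H,D}({\bf v})$ is a positive multiple of
$$r({\bf u})(\overline\mu_{H,D}({\bf u}) - \overline\mu_{H,D}({\bf w}_u)) - r({\bf v})(\overline\mu_{H,D}({\bf v}) - \overline\mu_{H,D}({\bf w})),$$
so it suffices to show this quantity is strictly positive.

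The main obstacle is verifying this strict inequality, which I would recast in Farey-theoretic language. Writing $\tilde\mu_H({\bf v}) = a_v/b_v$, $\tilde\mu_H({\bf w}) = a_w/b_w$, and similarly for ${\bf u}$, ${\bf w}_u$ in lowest terms, the Farey-neighbor identity $\tilde\mu_H({\bf v}) - \tilde\mu_H({\bf w}) = 1/(b_v b_w)$ (and its analogue for ${\bf u}, {\bf w}_u$) reduces the desired inequality to $k_u b_w > k_v b_{w_u}$, where $k_v = r({\bf v})/b_v$ and $k_u = r({\bf u})/b_u$. I would establish this via two preliminary observations. First, the Farey-neighbor bound $b_v + b_w > r({\bf v})$ prevents $k_v$ and $k_w = r({\bf w})/b_w$ from both exceeding $1$, and a direct divisibility computation using $a_v b_w - a_w b_v = 1$ yields $k_u = \gcd(k_v, k_w) = 1$, so that $b_u = r({\bf u})$. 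Second, rank-maximality \ref{cond-rankMaximal} gives $r({\bf u}) = r({\bf v}) \bmod b_w$, hence $b_u < b_w$. When $k_v = 1$, these together give $k_v b_{w_u} = b_{w_u} \leq b_u < b_w$ immediately. When $k_v \geq 2$ (so $k_w = 1$), the identity $a_u b_v - b_u a_v = 1$ shows $b_{w_u} \equiv b_v \pmod{b_u}$; combining this with $b_w > (k_v-1) b_v$ (Farey bound) and $b_w < k_v b_v = r({\bf v})$ (from coprimality of $b_v$ and $b_w$, with the integer-slope edge $b_v = 1$ falling into the torsion case handled above) yields $k_v b_{w_u} < b_w$ strictly.
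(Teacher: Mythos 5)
Your overall strategy---reducing to a comparison of the centers of $W({\bf w},{\bf v})$ and $W({\bf w}_u,{\bf u})$---is essentially the paper's, but the execution has a genuine gap in the positive-rank case. By additivity, $r({\bf v})\overline\mu_{H,D}({\bf v})=r({\bf w})\overline\mu_{H,D}({\bf w})+r({\bf u})\overline\mu_{H,D}({\bf u})$, so the quantity you need to be strictly positive,
$$r({\bf u})\bigl(\overline\mu_{H,D}({\bf u})-\overline\mu_{H,D}({\bf w}_u)\bigr)-r({\bf v})\bigl(\overline\mu_{H,D}({\bf v})-\overline\mu_{H,D}({\bf w})\bigr),$$
equals $r({\bf u})\bigl(\overline\mu_{H,D}({\bf w})-\overline\mu_{H,D}({\bf w}_u)\bigr)$; your claimed strict inequality is therefore exactly the assertion $\tilde\mu_H({\bf w}_u)<\tilde\mu_H({\bf w})$, and that is false in general. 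The paper's Lemma \ref{lem-quotientInvariants} only gives $\tilde\mu_H({\bf w}_u)\le\tilde\mu_H({\bf w})$, and equality really occurs: in the paper's own $\P^1\times\P^1$ example with $H=(1,1)$, $D_t=(t,-t)$, $t$ near $1$, and ${\bf v}$ of rank $2$ with $c_1({\bf v})=(1,0)$, one has ${\bf w}=\ch(\OO_X(1,-1))$, ${\bf u}$ of rank $1$ with $\tilde\mu_H({\bf u})=1$, and ${\bf w}_u=\ch(\OO_X)$, so $\tilde\mu_H({\bf w}_u)=0=\tilde\mu_H({\bf w})$. In that case the leading terms in $\overline\Delta_{H,D}({\bf v})$ of the two centers cancel exactly, and nesting is decided by the subleading terms, namely by the strict inequality $\overline\Delta_{H,D}({\bf w}_u)>\overline\Delta_{H,D}({\bf w})$ (here $t^2/2>(1-t)^2/2$); your argument never compares discriminants and so cannot close this case. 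Correspondingly, several steps of your Farey analysis fail in this example: the target inequality $k_ub_w>k_vb_{w_u}$ reads $1>1$; the claim $r({\bf u})=r({\bf v})\bmod b_w$ is wrong because it is the effectivity requirement in \ref{cond-rankBound}, not divisibility, that caps $r({\bf w})$; and an integer value of $\tilde\mu_H({\bf v})$ or $\tilde\mu_H({\bf w})$ does not force the torsion case.

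Two smaller points. Comparing centers of two semicircles decides nesting only for numerical walls of a \emph{common} character; you need the observation that $W({\bf w},{\bf v})=W({\bf w},{\bf u})$ (linearity of the central charge applied to ${\bf u}={\bf v}-{\bf w}$), so that both walls are numerical walls for ${\bf u}$, hence disjoint and ordered by their centers. And your treatment of $r({\bf u})=0$ is not yet an argument: the paper instead notes that up to tensoring by $\OO_X(H)$ there are only finitely many spaces $M_{H,D}({\bf u})$, giving a uniform bound on the radii of their Gieseker walls, while $W$ grows without bound as $\overline\Delta_{H,D}({\bf v})\to\infty$.
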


The proof of Proposition \ref{prop-nesting} is different based on whether $r({\bf u})>0$ or $r({\bf u})=0$. We treat the more interesting positive rank case first.

\subsection{Positive rank quotients} Throughout this subsection we assume $r({\bf u})>0$, i.e. $0< r({\bf w})<r({\bf v})$.  In particular, $r({\bf v})\geq 2$.  We write the inequality $\tilde \mu_H({\bf w}) < \tilde \mu_H({\bf v})< \tilde \mu_H({\bf u})$ as $$\frac{a'}{r'}< \frac{a}{r} < \frac{a''}{r''},$$ where the denominators are the ranks of the corresponding characters. We begin with a couple useful lemmas.  Write the above fractions in lowest terms as $$\frac{b'}{s'} < \frac{b}{s} < \frac{b''}{s''},$$ with positive denominators.
\begin{lemma}\label{lem-farey}
The fractions $\frac{b'}{s'}$ and $\frac{b}{s}$ are Farey neighbors.
\end{lemma}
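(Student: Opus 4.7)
The plan is to identify $\tilde\mu_H({\bf w}) = b'/s'$ as the immediate predecessor of $\tilde\mu_H({\bf v}) = b/s$ in the Farey sequence $F_{r({\bf v})}$, and then invoke the standard fact that two adjacent fractions in any Farey sequence $F_n$ (with $n \geq \max\{s',s\}$) are automatically Farey neighbors, i.e., satisfy $bs' - b's = 1$ and are adjacent already in $F_{\max\{s',s\}}$.

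First I would verify that both fractions lie in $F_{r({\bf v})}$. Since $b/s$ is $\tilde\mu_H({\bf v}) = a/r$ in lowest terms and $s$ divides $r = r({\bf v})$, we have $s \leq r({\bf v})$; similarly, because we are in the case $r({\bf w}) = r' < r({\bf v})$ (as $r({\bf u}) > 0$) and $s' \mid r'$, we get $s' \leq r' < r({\bf v})$. So both reduced fractions have denominator at most $r({\bf v})$.

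Next I would argue that $b'/s'$ is the immediate predecessor of $b/s$ in $F_{r({\bf v})}$. By property \ref{cond-slopeClose}, $\tilde\mu_H({\bf w})$ is the largest reduced slope strictly less than $\tilde\mu_H({\bf v})$ that is attained by a character of rank at most $r({\bf v})$; combined with the observation recorded earlier in the paper that \emph{every} rational with denominator $\leq r$ arises as the reduced slope of a stable bundle of rank at most $r$, this says $b'/s'$ is the largest element of $F_{r({\bf v})}$ strictly below $b/s$. Hence $b'/s'$ and $b/s$ are consecutive in $F_{r({\bf v})}$.

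Finally, the standard property of Farey sequences (two consecutive terms $p/q < p''/q''$ in any $F_n$ satisfy $p''q - pq'' = 1$, and in particular they are Farey neighbors in the sense defined in \S 3.3) yields both $b s' - b' s = 1$ and adjacency of $b'/s'$, $b/s$ in $F_{\max\{s',s\}}$. There is no real obstacle here: the lemma is essentially a bookkeeping step translating the extremality condition \ref{cond-slopeClose} into number-theoretic language, so that the mediant construction can be applied in the next lemma (where the Farey neighbor $b''/s''$ on the other side of $b/s$ will enter and the identity $bs' - b's = 1$ becomes the workhorse of the subsequent nesting arguments).
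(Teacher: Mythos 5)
There is a genuine gap. You assume that $\tilde\mu_H({\bf w}) = b'/s'$ is always the immediate predecessor of $\tilde\mu_H({\bf v}) = b/s$ in $F_{r({\bf v})}$, but condition \ref{cond-slopeClose} maximizes the slope only \emph{subject to} \ref{cond-rankBound}, and \ref{cond-rankBound} contains the effectivity requirement on $c_1({\bf v})-c_1({\bf w})$ when $r({\bf w})=r({\bf v})$. That clause bites precisely when the $F_{r({\bf v})}$-predecessor of $\tilde\mu_H({\bf v})$ has denominator equal to $r := r({\bf v})$, which happens exactly when $\tilde\mu_H({\bf v})$ is an integer $b$ (the predecessor is then $b-\frac{1}{r}$, whose reduced denominator is $r$, forcing $r({\bf w})=r$). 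If $X$ carries no effective line bundle of reduced slope $1$, that choice is forbidden, and by Remark \ref{rem-Farey} one instead has $\tilde\mu_H({\bf w}) = b-\frac{1}{r-1}$, which is \emph{not} adjacent to $b$ in $F_r$: the fraction $b-\frac{1}{r}$ sits strictly between them. So your identification ``$b'/s'$ is the largest element of $F_{r({\bf v})}$ strictly below $b/s$'' fails in this case, and with it the reduction to the standard adjacency fact. Note that this case is not excluded by the standing hypothesis $r({\bf u})>0$ of the subsection, since there one typically has $r({\bf w})=r-1<r$.

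The lemma is still true in the missing case, but it needs a separate one-line verification rather than the general adjacency fact: with $\frac{b'}{s'}=\frac{(r-1)b-1}{r-1}$ and $\frac{b}{s}=\frac{b}{1}$ one computes $b\cdot(r-1)-\left((r-1)b-1\right)\cdot 1=1$, so the two fractions are Farey neighbors (adjacent in $F_{r-1}$, though not in $F_r$). This is exactly why the paper's proof simply cites Remark \ref{rem-Farey}, which records both branches of the formula for $\tilde\mu_H({\bf w})$ when $r({\bf v})\geq 2$; your argument covers only the first branch. Adding the computation for the second branch would make your proof complete.
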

\begin{proof}
This follows immediately from Remark \ref{rem-Farey} since $r({\bf v})\geq 2$.
\end{proof}

  \begin{lemma}\label{lem-reducedFraction}
 At least one of the fractions $\frac{a'}{r'}$ or $\frac{a}{r}$ is already written in lowest terms.
 \end{lemma}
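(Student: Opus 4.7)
The plan is to argue by contradiction. Suppose neither $\frac{a'}{r'}$ nor $\frac{a}{r}$ is in lowest terms, so $s<r$ and $s'<r'$. Since $s\mid r$ and $s<r$, we have $s\leq r/2$; similarly $s'\leq r'/2$. Here is where the standing assumption of the subsection enters: since $r({\bf u})>0$ we have $r' = r({\bf w})<r({\bf v})=r$, so in fact $s'\leq r'/2\leq (r-1)/2$. Adding these bounds gives
\[
s+s' \;\leq\; \frac{r}{2}+\frac{r-1}{2} \;=\; r-\tfrac{1}{2},
\]
and since $s+s'$ is an integer, $s+s'\leq r-1<r$.

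Next I would exploit Lemma~\ref{lem-farey}: because $\frac{b'}{s'}$ and $\frac{b}{s}$ are Farey neighbors, the mediant $\frac{b+b'}{s+s'}$ is already in lowest terms and lies strictly between them. In particular it is a rational number in the open interval $(\tilde\mu_H({\bf w}),\tilde\mu_H({\bf v}))$ whose denominator $s+s'$ is at most $r-1$.

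Then the plan is to manufacture a competing character ${\bf w}'$ that violates the extremality condition \ref{cond-slopeClose}. Set $n=s+s'$, pick any line bundle $L_0$ on $X$ with $L_0\cdot H=e$ (which exists by the definition of $e$), and let ${\bf w}'$ be any Chern character with $r({\bf w}')=n$, $c_1({\bf w}')=(b+b')L_0$, and discriminant large enough that Theorem~\ref{thm-OGrady} gives $M_{H,D}({\bf w}')\neq\emptyset$. Then $\tilde\mu_H({\bf w}')=\frac{b+b'}{s+s'}$, and since $r({\bf w}')=n\leq r-1<r({\bf v})$, condition \ref{cond-rankBound} holds without any effectiveness requirement on $c_1({\bf v})-c_1({\bf w}')$. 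But $\tilde\mu_H({\bf w}')>\tilde\mu_H({\bf w})$ while still less than $\tilde\mu_H({\bf v})$, contradicting \ref{cond-slopeClose}.

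The only potential snag is the effectiveness clause in \ref{cond-rankBound}, which would be a real constraint if we were forced to take $r({\bf w}')=r({\bf v})$. The crucial reason the argument works is precisely the strict inequality $s+s'<r$ derived in the first paragraph, which keeps ${\bf w}'$ in the "safe" rank regime $r({\bf w}')<r({\bf v})$. This in turn relied on $r'<r$, i.e., the hypothesis $r({\bf u})>0$ of the subsection, so that the bound on $s'$ improves from $r/2$ to $(r-1)/2$.
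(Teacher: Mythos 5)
Your proof is correct and follows essentially the same route as the paper's: assume neither fraction is reduced, bound the mediant's denominator $s+s'$ by roughly $\tfrac{1}{2}(r+r')$, and use the mediant to contradict the minimality in \ref{cond-slopeClose}. The only (cosmetic) difference is that you use $r'<r$ up front to get $s+s'\leq r-1$ in one step, whereas the paper obtains $s+s'\leq r$ and disposes of the boundary case $s+s'=r$ by noting it forces $r=r'$; your explicit construction of the competing character ${\bf w}'$ just fills in a detail the paper leaves implicit.
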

 \begin{proof}
 Suppose not.  Then $r\geq 2s$ and $r'\geq 2s'$.  The denominator $s'+s$ of the mediant $$\frac{b'}{s'}< \frac{b'+b}{s'+s} < \frac{b}{s}$$ satisfies $$s'+s \leq \frac{1}{2}(r'+r) \leq r.$$  If $s'+s<r$, then condition \ref{cond-slopeClose} defining $\tilde \mu_H({\bf w})$ is violated.  On the other hand, if $s'+s=r$, then $r=r'$, contrary to our assumption.
 \end{proof}
 
We now relate an extremal character for ${\bf u}$ to ${\bf w}$.

\begin{lemma}\label{lem-quotientInvariants}
Let ${\bf w}^\dagger$ be an extremal character for ${\bf u}$. Then $\tilde \mu_H({\bf w}^\dagger) \leq \tilde \mu_H({\bf w})$, and in case of equality we have $\overline\Delta_{H,D}({\bf w}^\dagger) > \overline\Delta_{H,D}({\bf w})$.
\end{lemma}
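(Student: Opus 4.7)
The plan is to test the defining clauses \ref{cond-rankBound}--\ref{cond-rankMaximal} of the extremal character ${\bf w}$ against the candidate character ${\bf y}:={\bf w}+{\bf w}^\dagger$ (and in one sub-case against ${\bf w}^\dagger$ itself). The preliminary rank check is immediate: $r({\bf y})=r({\bf w})+r({\bf w}^\dagger)\le r({\bf w})+r({\bf u})=r({\bf v})$, and when equality holds one has $r({\bf w}^\dagger)=r({\bf u})$, so clause \ref{cond-rankBound} applied to ${\bf w}^\dagger$ as an extremal character for ${\bf u}$ gives that $c_1({\bf v})-c_1({\bf y})=c_1({\bf u})-c_1({\bf w}^\dagger)$ is effective. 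Thus ${\bf y}$ satisfies \ref{cond-rankBound} relative to ${\bf v}$.

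For the slope inequality I would argue by contradiction: supposing $\tilde\mu_H({\bf w}^\dagger)>\tilde\mu_H({\bf w})$, the rank-weighted average $\tilde\mu_H({\bf y})$ strictly exceeds $\tilde\mu_H({\bf w})$. I would show $\tilde\mu_H({\bf y})<\tilde\mu_H({\bf v})$ by expanding $(r({\bf w})+r({\bf w}^\dagger))(\tilde\mu_H({\bf v})-\tilde\mu_H({\bf y}))$ and combining the slope balance $r({\bf w})(\tilde\mu_H({\bf v})-\tilde\mu_H({\bf w}))=r({\bf u})(\tilde\mu_H({\bf u})-\tilde\mu_H({\bf v}))$ with $r({\bf w}^\dagger)\le r({\bf u})$ and $\tilde\mu_H({\bf w}^\dagger)-\tilde\mu_H({\bf v})<\tilde\mu_H({\bf u})-\tilde\mu_H({\bf v})$ (the latter holding regardless of the sign of $\tilde\mu_H({\bf w}^\dagger)-\tilde\mu_H({\bf v})$). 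This produces a reduced slope strictly between $\tilde\mu_H({\bf w})$ and $\tilde\mu_H({\bf v})$ realized by a character satisfying \ref{cond-rankBound}, contradicting the maximality clause \ref{cond-slopeClose} for ${\bf w}$.

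For the strict discriminant inequality in the case $\tilde\mu_H({\bf w}^\dagger)=\tilde\mu_H({\bf w})$, I would first observe that ${\bf w}^\dagger$ on its own is a valid candidate for \ref{cond-discriminantMinimal} relative to ${\bf w}$: the bound $r({\bf w}^\dagger)\le r({\bf u})<r({\bf v})$ removes any effectivity concern, and $M_{H,D}({\bf w}^\dagger)$ is nonempty by \ref{cond-stable} for ${\bf w}^\dagger$. This gives $\overline\Delta_{H,D}({\bf w}^\dagger)\ge\overline\Delta_{H,D}({\bf w})$. If the inequality were an equality, I would pass to ${\bf y}$: the coinciding slopes make the cross term in the discriminant identity from \S\ref{ssec-quotient} vanish, so $\overline\Delta_{H,D}({\bf y})=\overline\Delta_{H,D}({\bf w})$; and a direct sum $F\oplus F^\dagger$ with $F\in M_{H,D}({\bf w})$ and $F^\dagger\in M_{H,D}({\bf w}^\dagger)$ is $(H,D)$-twisted Gieseker semistable (the summands now have identical reduced Hilbert polynomials, and the category of semistables with fixed reduced Hilbert polynomial is abelian), so $M_{H,D}({\bf y})\ne\emptyset$ and the strict inequality $r({\bf y})>r({\bf w})$ contradicts \ref{cond-rankMaximal}.

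The main obstacle is the slope computation in the regime $\tilde\mu_H({\bf w}^\dagger)\in[\tilde\mu_H({\bf v}),\tilde\mu_H({\bf u}))$, where $\tilde\mu_H({\bf v})-\tilde\mu_H({\bf w}^\dagger)$ is non-positive and the positive contribution $r({\bf w})(\tilde\mu_H({\bf v})-\tilde\mu_H({\bf w}))$ must dominate via the slope balance; outside that regime all terms have favorable signs. The remaining ingredients are routine verifications against the defining clauses of ${\bf w}$.
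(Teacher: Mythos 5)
Your proof is correct, but it takes a genuinely different route from the paper on the main point, the slope inequality. The paper argues by trichotomy on the position of $\tilde\mu_H({\bf w}^\dagger)$ relative to the interval $(\tilde\mu_H({\bf w}),\tilde\mu_H({\bf v}))$: the case $\tilde\mu_H({\bf w}^\dagger)\in(\tilde\mu_H({\bf v}),\tilde\mu_H({\bf u}))$ is handled with the difference character of invariants $(a-a^\dagger, r-r^\dagger)$ and a weighted-mean comparison, while the boundary case $\tilde\mu_H({\bf w}^\dagger)=\tilde\mu_H({\bf v})$ requires the Farey-neighbor machinery of Lemmas \ref{lem-farey} and \ref{lem-reducedFraction} together with the nonexistence of an effective line bundle of reduced slope $1$. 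You instead test the single sum character ${\bf y}={\bf w}+{\bf w}^\dagger$ against \ref{cond-slopeClose}: your weighted-mean estimate, using the slope balance $r({\bf w})(\tilde\mu_H({\bf v})-\tilde\mu_H({\bf w}))=r({\bf u})(\tilde\mu_H({\bf u})-\tilde\mu_H({\bf v}))$ together with $r({\bf w}^\dagger)\le r({\bf u})$ and $\tilde\mu_H({\bf w}^\dagger)<\tilde\mu_H({\bf u})$, does place $\tilde\mu_H({\bf y})$ strictly in $(\tilde\mu_H({\bf w}),\tilde\mu_H({\bf v}))$ in all three of the paper's cases at once, and your verification of \ref{cond-rankBound} for ${\bf y}$ (effectivity of $c_1({\bf u})-c_1({\bf w}^\dagger)$ when $r({\bf w}^\dagger)=r({\bf u})$) is exactly right. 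This is shorter and bypasses the Farey lemmas entirely for this statement, at the cost of not exhibiting the explicit arithmetic structure that the paper reuses elsewhere (e.g.\ Remark \ref{rem-Farey}). On the discriminant step the two arguments converge: both ultimately pit ${\bf w}+{\bf w}^\dagger$ against \ref{cond-rankMaximal}, but you correctly observe that the paper's auxiliary minimal-rank character ${\bf w}_0$ and its sub-case analysis can be skipped, since equality of slopes and discriminants already makes ${\bf y}$ satisfy \ref{cond-rankBound}--\ref{cond-discriminantMinimal} directly, with the polystable direct sum witnessing \ref{cond-stable} and the vanishing cross term in the discriminant identity of \S\ref{ssec-quotient} giving $\overline\Delta_{H,D}({\bf y})=\overline\Delta_{H,D}({\bf w})$.
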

\begin{proof}
First we show that $\tilde \mu_H({\bf w}^\dagger) \leq \tilde \mu_H ({\bf w})$.  We write $\tilde \mu_H({\bf w}^\dagger) = \frac{a^\dagger}{r^\dagger}$, and have $\frac{a^\dagger}{r^\dagger} < \frac{a''}{r''}$ and $r^\dagger \leq r''$.  It suffices to show that $\frac{a^\dagger}{r^\dagger} \notin (\frac{a'}{r'},\frac{a''}{r''})$.    We consider three different cases, depending on the relationship between $\frac{a^\dagger}{r^\dagger}$ and $\frac{a}{r}$.

\emph{Case 1:} Suppose $\frac{a^\dagger}{r^\dagger} \in (\frac{a'}{r'},\frac{a}{r})$.  Since $r^\dagger \leq r'' < r$, this contradicts the definition of $\tilde \mu_H({\bf w})$.

\emph{Case 2:} Suppose $\frac{a^\dagger}{r^\dagger} \in (\frac{a}{r},\frac{a''}{r''})$.  Let \begin{align*} a^{\ddagger}&=a-a^\dagger\\ r^{\ddagger} &= r-r^\dagger.
\end{align*} Then $\frac{a^{\ddagger}}{r^{\ddagger}}\in (\frac{a'}{r'},\frac{a}{r})$, again contradicting the definition of $\tilde \mu_H({\bf w})$.  Indeed, to prove this, we
can view $\frac{a}{r}$ as a weighted mean in two ways:
$$\frac{a}{r} = \frac{r' \frac{a'}{r'}+r''\frac{a''}{r''}}{r'+r''} = \frac{r^\ddagger\frac{a^\ddagger}{r^\ddagger}+r^\dagger\frac{a^\dagger}{r^\dagger}}{r^\ddagger+r^\dagger}.$$ Since $\frac{a^\dagger}{r^\dagger}$ is closer to $\frac{a}{r}$ than $\frac{a''}{r''}$ is and the weight $\frac{r^\dagger}{r}$ on $\frac{a^\dagger}{r^\dagger}$ in the second mean is smaller than the weight $\frac{r''}{r}$ on $\frac{a''}{r''}$ in the first mean, it follows that $\frac{a^\ddagger}{r^\ddagger}\in (\frac{a'}{r'},\frac{a}{r})$.  

\emph{Case 3:} Suppose $\frac{a^\dagger}{r^\dagger} = \frac{a}{r}$.  Since $r^\dagger <r$, we find that the fraction $\frac{a}{r}$ is not already reduced, and therefore $a'=b'$ and $r'=s'$ by Lemma \ref{lem-reducedFraction}.  We must have $s+s' \geq r$, for otherwise the mediant of $\frac{b'}{s'}$ and $\frac{b}{s}$ lies in $(\frac{a'}{r'},\frac{a}{r})$ and has denominator less than $r$, contradicting the definition of $\tilde\mu_H({\bf w})$.  Thus $$r'' = r-s'\leq s.$$ If this inequality is strict, then $r^\dagger\leq r''$ gives a contradiction.

Suppose instead that $r'' = s$ and $s+s'=r$.  Since $\frac{b'}{s'}$ and $\frac{b}{s}$ are  Farey neighbors by Lemma \ref{lem-farey}, their mediant $$\frac{b'+b}{s'+s}$$ is written in lowest terms and has denominator $r$.  Then $\frac{b'+b}{r}$ and $\frac{b}{s}$ are consecutive terms in the Farey sequence of order $r$, so $\gcd(r,s)=1$.  Therefore $r'' = s = 1$, the slope $\tilde \mu_H({\bf v})=b$ is an integer, and $\tilde \mu_H({\bf u}) = b+1$.  Since $\tilde \mu_H({\bf w}) \neq b-\frac{1}{r}$, there is no effective line bundle on $X$ of reduced slope $1$.  This implies $\tilde \mu_H({\bf w}^\dagger) \neq b$, a contradiction.

This completes the proof that $\tilde \mu_H({\bf w}^\dagger) \leq \tilde \mu_H({\bf w})$. Suppose $\tilde \mu_H({\bf w}^\dagger) = \tilde \mu_H({\bf w})$.  Let ${\bf w}_0$ be any character satisfying conditions \ref{cond-rankBound}-\ref{cond-discriminantMinimal} of the definition of an extremal character for ${\bf v}$, but such that the rank $r_0'$ of ${\bf w}_0$ is as \emph{small} as possible.  We must have $r'+ r_0' \geq r$ by condition \ref{cond-rankMaximal} in the definition of $\bf w$, since otherwise ${\bf w}+{\bf w}_0$ would satisfy \ref{cond-rankBound}-\ref{cond-discriminantMinimal} but have larger rank.  Therefore $$r^\dagger \leq r'' = r-r' \leq r_0'.$$  If the strict inequality $r^\dagger < r_0'$ holds, then by the definition of ${\bf w}_0$ we have $\overline\Delta_{H,D}({\bf w}^\dagger) >\overline\Delta_{H,D}({\bf w})$.

Instead assume $r^\dagger = r'' = r_0'$.  In this case we have $\overline\Delta_{H,D}({\bf w}^\dagger) \geq \overline\Delta_{H,D}({\bf w})$, so assume $\overline\Delta_{H,D}({\bf w}^\dagger) = \overline\Delta_{H,D}({\bf w})$.  Then ${\bf w}_0$ and ${\bf w}^\dagger$ have the same invariants, except possibly their first Chern classes are different.  But then ${\bf w}+{\bf w}^\dagger$ satisfies conditions \ref{cond-rankBound}-\ref{cond-discriminantMinimal} in the definition of ${\bf w}$, since $$c_1({\bf v})-(c_1({\bf w})+c_1({\bf w}^\dagger)) = c_1({\bf u})-c_1({\bf w}^\dagger)$$ and $c_1({\bf u})-c_1({\bf w}^\dagger)$ is effective.  This contradicts the condition \ref{cond-rankMaximal} in the definition of $\bf w$.  Therefore $\overline\Delta_{H,D}({\bf w}^\dagger) > \overline\Delta_{H,D}({\bf w})$ holds in this case as well.
\end{proof}

The lemma immediately allows us to treat the positive rank quotient case of Proposition \ref{prop-nesting}.

\begin{proof}[Proof of Proposition \ref{prop-nesting} when $r({\bf u})>0$]
By Lemma \ref{lem-quotientInvariants}, we have $\overline\mu_{H,D}({\bf w}^\dagger) \leq \overline\mu_{H,D}({\bf w})$, and in case of equality $\overline\Delta_{H,D}({\bf w}^\dagger) > \overline\Delta_{H,D}({\bf w})$.  We must compare the walls $W = W({\bf w},{\bf v}) = W({\bf w},{\bf u})$ and $W^\dagger := W({\bf w}^\dagger,{\bf u})$; note that both walls are numerical walls for ${\bf u}$, so they are disjoint, and it suffices to compare their right endpoints.  Recall that $\overline\Delta_{H,D}({\bf u})$ is an increasing function of $\overline\Delta_{H,D}({\bf v})$, and $\overline\Delta_{H,D}({\bf u})\to \infty$ as $\overline\Delta_{H,D}({\bf v})\to \infty$.

First suppose $\overline\mu_{H,D}({\bf w}^\dagger) < \overline\mu_{H,D}({\bf w})$.  Then the right endpoints $x_W$ and $x_{W^\dagger}$ of $W$ and $W^\dagger$ are increasing functions of $\overline\Delta_{H,D}({\bf v})$.  Furthermore, as $\overline\Delta_{H,D}({\bf v})\to \infty$, we have $x_W \to \overline\mu_{H,D}({\bf w})$ and $x_{W^\dagger} \to \overline\mu_{H,D}({\bf w}^\dagger)$.  Therefore if $\overline\Delta_{H,D}({\bf v})$ is sufficiently large, $x_{W^\dagger} < x_W$, and $W^\dagger$ is nested in $W$.

Next suppose $\overline\mu_{H,D}({\bf w}^\dagger) = \overline\mu_{H,D}({\bf w})$ and $\overline\Delta_{H,D}({\bf w}^\dagger)  > \overline\Delta_{H,D}({\bf w})$.  Comparing the formulas for the centers of $W({\bf w},{\bf u})$ and $W({\bf w}^\dagger,{\bf u})$ immediately proves the result; we don't even need to increase $\overline\Delta_{H,D}({\bf v})$.
\end{proof}
 
\subsection{Rank zero quotients}  In this case things are considerably easier.  

\begin{proof}[Proof of Proposition \ref{prop-nesting} when $r({\bf u})=0$] Record the character ${\bf u}$ in terms of its first Chern class $c_1({\bf u})$ and Euler characteristic $\chi({\bf u})$, which depends on $\overline\Delta_{H,D}({\bf v})$.  By the construction of ${\bf w}$, $c_1({\bf u})$ is effective, so the moduli spaces $M_{H,D}({\bf u})$ are all nonempty.  Since tensoring by $\OO_X(H)$ gives an isomorphism $$M_{H,D}(c_1({\bf u}),\chi({\bf u})) \cong M_{H,D}(c_1({\bf u}), \chi({\bf u})+H\cdot c_1({\bf u})),$$ there are only finitely many isomorphism types of spaces $M_{H,D}({\bf u})$ as $\overline\Delta_{H,D}({\bf v})$ varies.  Tensoring by $\OO_X(H)$ also preserves the radius of the Gieseker wall, assuming the Gieseker wall of either space is nonempty.  Therefore, there is a universal bound on the radii of the Gieseker walls of the spaces $M_{H,D}({\bf u})$.  

Recall that the numerical walls for ${\bf u}$ are nested semicircles with a common center that foliate the entire $(H,D)$-slice \cite{CoskunHuizenga,Maciocia}.  Since $W = W({\bf w},{\bf v}) = W({\bf w},{\bf u})$ is also a numerical wall for for ${\bf u}$, a numerical wall for ${\bf u}$ is nested inside $W$ if and only if its radius is smaller than the radius of $W$.  Since $W$ is arbitrarily large for $\overline\Delta_{H,D}({\bf v})\gg 0$, this completes the proof.
\end{proof}

\section{Orthogonal curves}\label{sec-curves}

In this section we prove that $W$ is actually the Gieseker wall by producing curves of objects in $M_{H,D}({\bf v})$ which are destabilized along $W$.  If $M_{H,D}({\bf v})$ contains only stable sheaves, our curves will furthermore be orthogonal to the nef divisor given by $W$.  We first recall some algebraic preliminaries.

\subsection{Extensions}\label{ssec-extensions} The basis for our construction of stable sheaves is the following mild generalization of \cite[Lemma 6.9]{BayerMacri3}.  Recall that a simple object in an abelian category is an object with no proper subobjects, and a semisimple object is a (finite) direct sum of simple objects.  In what follows we write $$A = \bigoplus_i A_i^{n_i}$$ with the $A_i$ simple and nonisomorphic. Then every  subobject or quotient object of $A$ is isomorphic to an object $\bigoplus A_i^{m_i}$ for some integers $m_i$ with $0\leq m_i \leq n_i$.  In particular, every quotient of $A$ is also a subobject of $A$.

\begin{lemma}\label{lem-subobjects}
Let $\cA$ be an abelian category, and let $A,B\in \cA$ with $A$ semisimple and $B$ simple.  If $E$ is any extension of the form $$0\to A\to E \to B \to 0$$ with $\Hom(E,A)=0$, then any subobject of $E$ is a subobject of $A$.
\end{lemma}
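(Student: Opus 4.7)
The plan is to take a subobject $E' \subseteq E$ and analyze how it sits relative to the given extension. First, I would form $A' := E' \cap A$ (the pullback of $E' \hookrightarrow E$ and $A \hookrightarrow E$) and let $B'$ denote the image of $E'$ under the quotient map $E \to B$. These fit into a short exact sequence
$$0 \to A' \to E' \to B' \to 0,$$
with $B' \hookrightarrow B$. Since $B$ is simple, either $B' = 0$ or $B' = B$. In the first case, $E' = A'$ is already a subobject of $A$, and we are done.

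The real work is in the case $B' = B$, where I would derive a contradiction with the hypothesis $\Hom(E,A) = 0$. Here $E' + A = E$ inside $E$. Because $A$ is semisimple and $A'$ is a subobject of $A$, it is a direct summand: write $A = A' \oplus A''$. Assume first that $A'' \neq 0$; I will construct a nonzero morphism $E \to A''$, contradicting $\Hom(E,A)=0$ via the inclusion $A'' \hookrightarrow A$. The construction uses the pushout presentation $E \cong (A \oplus E')/A'$, where $A'$ is embedded antidiagonally via $a' \mapsto (a', -a')$ (which follows from a direct check that the map $A \oplus E' \to E$ given by $(a,e') \mapsto a + e'$ is surjective with kernel $A'$). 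The composite $A \oplus E' \xrightarrow{(\mathrm{pr},0)} A \twoheadrightarrow A/A' = A''$ vanishes on the antidiagonal copy of $A'$ and therefore descends to a morphism $\phi : E \to A''$. Its restriction to $A$ is the canonical projection onto $A''$, which is nonzero, so $\phi \neq 0$. This is the desired contradiction.

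It remains to eliminate the remaining subcase $A'' = 0$, i.e.\ $A' = A$: then $A \subseteq E'$, and combined with $E' + A = E$ this forces $E' = E$, which is excluded (interpreting the statement for subobjects distinct from $E$ itself, since $E \subseteq A$ would require $B = 0$; in the applications the subobjects considered are always proper). The main subtle step is the identification $E \cong (A \oplus E')/A'$ and the verification that the induced map descends; the rest is a routine case analysis driven by simplicity of $B$ and semisimplicity of $A$.
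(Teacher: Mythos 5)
Your proof is correct and follows essentially the same route as the paper's: both reduce to the case where the subobject surjects onto $B$, and both derive a contradiction by producing a nonzero map $E \to A$ from the quotient $E/E' \cong A/(E'\cap A)$, which semisimplicity of $A$ realizes as a direct summand, hence subobject, of $A$. Your explicit pushout presentation $E \cong (A\oplus E')/A'$ is just an unwound form of the paper's appeal to the cokernel of $E'\subset E$ being a quotient of $A$, and your handling of the degenerate case $E'=E$ makes explicit a point the paper leaves implicit.
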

\begin{proof}
Let $S\subset E$ be a subobject.  Consider the composition $\phi: S\to E \to B$.  Since $B$ is simple, $\phi$ is either surjective or zero.  If $\phi$ is zero, then $S$ is a subobject of $A$.

Suppose instead that $\phi$ is surjective; in this case we will obtain a contradiction.  Let $C$ be the cokernel of the inclusion $$0\to S \to E \to C \to 0.$$ Then the composition $A\to E \to C$ is surjective, so $C$ is a quotient of $A$.  Since $A$ is semisimple, $C$ is also isomorphic to a subobject of $A$, but this contradicts $\Hom(E,A)=0$.
\end{proof}

The next lemma gives a criterion for the vanishing $\Hom(E,A)=0$ needed to apply Lemma \ref{lem-subobjects}.  Recall that by Schur's lemma, if $A,B$ are simple objects in a $\C$-linear abelian category, then $\Hom(A,B)=0$ unless $A,B$ are isomorphic, and $\Hom(A,A)\cong \C$.

\begin{lemma}\label{lem-noHoms}
Let $\cA$ be a $\C$-linear abelian category, and let $A,B\in \cA$ with $A$ semisimple and $B$ simple.  Assume $B$ is not a simple factor of $A$.  Consider an extension $$0\to A\to E\to B\to 0$$ given by an extension class $e\in \Ext^1(B,A) \cong \bigoplus_i \Ext^1(B,A_i)^{n_i}$.  For each $i$, write $e_{i,1},\ldots,e_{i,n_i}$ for the $n_i$ components of $e$ under this isomorphism.  Then $\Hom(E,A) = 0$ if and only if $e_{i,1},\ldots,e_{i,n_i}$ are linearly independent in $\Ext^1(B,A_i)$ for all $i$.

In particular, if $E$ is a general extension as above, then $\Hom(E,A)=0$ if and only if $\ext^1(B,A_i) \geq n_i$ for all $i$.
\end{lemma}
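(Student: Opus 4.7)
The natural approach is to apply the contravariant functor $\Hom(-,A)$ to the defining extension
$$0\to A\to E\to B\to 0$$
and read off $\Hom(E,A)$ from the resulting long exact sequence
$$0\to \Hom(B,A)\to \Hom(E,A)\to \Hom(A,A)\xrightarrow{\delta} \Ext^1(B,A)\to\cdots.$$
Since $B$ is simple and not isomorphic to any $A_i$, Schur's lemma gives $\Hom(B,A_i)=0$ for each $i$, hence $\Hom(B,A)=0$. Consequently $\Hom(E,A)\cong \ker(\delta)$, and the task reduces to analyzing the connecting map $\delta$, which by general principles sends $\phi\in\Hom(A,A)$ to the pushforward $\phi_*(e)\in \Ext^1(B,A)$.

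The key step is to make $\delta$ completely explicit using the decomposition $A=\bigoplus_i A_i^{n_i}$. Applying Schur's lemma again, the $A_i$ being pairwise nonisomorphic and simple, we obtain
$$\Hom(A,A)=\bigoplus_i \Hom(A_i^{n_i},A_i^{n_i})=\bigoplus_i \mathrm{Mat}_{n_i\times n_i}(\C)$$
and similarly a canonical decomposition $\Ext^1(B,A)=\bigoplus_i \Ext^1(B,A_i)^{n_i}$. Under these identifications, writing $e=\bigoplus_i v_i$ with column vector $v_i=(e_{i,1},\ldots,e_{i,n_i})^T$, the map $\delta$ splits as a direct sum of the maps
$$\delta_i\colon \mathrm{Mat}_{n_i\times n_i}(\C)\to \Ext^1(B,A_i)^{n_i},\qquad M_i\mapsto M_i\cdot v_i.$$
Thus $\ker(\delta)=\bigoplus_i \ker(\delta_i)$.

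The final step is the linear-algebra observation that $\ker(\delta_i)=0$ if and only if $e_{i,1},\ldots,e_{i,n_i}$ are linearly independent in $\Ext^1(B,A_i)$. Indeed, independence forces each row of $M_i$ to be zero, while a nontrivial relation $\sum_j c_j e_{i,j}=0$ immediately produces a nonzero $M_i$ (the matrix with first row $(c_1,\ldots,c_{n_i})$ and other rows zero) killed by $\delta_i$. Combining over $i$ yields the desired equivalence. For the last sentence, linear independence of $n_i$ vectors in $\Ext^1(B,A_i)$ is an open condition which is nonempty precisely when $\ext^1(B,A_i)\geq n_i$, so a generic extension class $e$ satisfies $\Hom(E,A)=0$ exactly under that numerical hypothesis.

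I do not expect a serious obstacle here; the argument is essentially bookkeeping around the connecting map, and the only subtle point is checking that $\delta$ genuinely acts by $M_i\cdot v_i$ under the Schur-lemma identifications. This is standard once one recalls that the connecting homomorphism in the $\Ext$ long exact sequence is given by Yoneda product with the extension class.
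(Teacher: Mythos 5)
Your proof is correct and follows essentially the same route as the paper: apply the contravariant Hom functor to the extension, use Schur's lemma to kill $\Hom(B,-)$, and identify the connecting map with pairing against the components $e_{i,j}$. The only cosmetic difference is that the paper applies $\Hom(-,A_i)$ one simple factor at a time, so the connecting map is just $\C^{n_i}\to \Ext^1(B,A_i)$, $(c_j)\mapsto \sum_j c_j e_{i,j}$, and the matrix bookkeeping you carry out for $\delta$ on all of $\Hom(A,A)$ is avoided.
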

\begin{proof}
Observe that $\Hom(E,A)=0$ if and only if $\Hom(E,A_i)=0$ for all $i$.  Applying $\Hom(-,A_i)$ to the sequence defining $E$ and using that $B\not\cong A_i$, we get an exact sequence $$0\to \Hom(E,A_i) \to \Hom(A_i,A_i)^{\oplus n_i} \to \Ext^1(B,A_i).$$ The map $\C^{n_i}\cong \Hom(A_i,A_i)^{\oplus n_i}\to \Ext^1(B,A_i)$ carries the identity map in the $j$th component of $\Hom(A_i,A_i)^{\oplus n_i}$ to $e_{i,j}$, so this map is injective if and only if the $e_{i,j}$ are linearly independent. 
\end{proof}

Finally, we study when two extensions are isomorphic.

\begin{lemma}\label{lem-groupAction}
Let $\cA$ be a $\C$-linear abelian category, and let $A,B\in \cA$ with $A$ semisimple, $B$ simple, and $B$ not a simple factor of $A$.  If \begin{align*} 0 \to A \to E\hphantom{'} \to B \to 0 \\ 0\to A\to E'\to B\to 0\end{align*} are two extensions of $B$ by $A$, then any isomorphism $E\to E'$ is induced by an automorphism of $A$.  

Therefore, if $\ext^1(B,A) > \dim \Aut A = \sum_i n_i^2$ then two general extensions $E,E'$ as above are non-isomorphic.
\end{lemma}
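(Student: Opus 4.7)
The proof naturally splits into two parts. For the first assertion, suppose $\phi:E\to E'$ is an isomorphism. Consider the composition $A\hookrightarrow E\xrightarrow{\phi}E'\twoheadrightarrow B$. Since $A=\bigoplus A_i^{n_i}$ is semisimple with none of its simple factors $A_i$ isomorphic to $B$, Schur's lemma gives $\Hom(A,B)=0$; hence the composition vanishes and $\phi(A)\subseteq A$. Applying the same argument to $\phi^{-1}$ yields $\phi(A)=A$, so $\phi$ restricts to an automorphism $\alpha\in\Aut(A)$ and descends to an automorphism $\beta\in\Aut(B)=\C^*$ on the quotient. A direct diagram chase then shows that the extension classes $e,e'\in\Ext^1(B,A)$ of $E$ and $E'$ satisfy $\alpha_*e=\beta^*e'$, where $\alpha_*$ and $\beta^*$ denote the induced pushforward and pullback on $\Ext^1$.

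For the second assertion, this correspondence implies that $E\cong E'$ if and only if $e$ and $e'$ lie in the same orbit of $\Aut(A)\times\Aut(B)$ acting on $\Ext^1(B,A)$. Since $\Aut(B)=\C^*$ acts by scalar multiplication, the orbits in $\Ext^1(B,A)\setminus\{0\}$ descend to orbits of $\Aut(A)$ on $\P(\Ext^1(B,A))$. The decomposition $A=\bigoplus A_i^{n_i}$ together with Schur's lemma identifies the endomorphism algebra of $A$ with $\bigoplus_i\Mat_{n_i}(\C)$, so $\dim\Aut(A)=\sum_i n_i^2$. Every $\Aut(A)$-orbit in $\P(\Ext^1(B,A))$ therefore has dimension at most $\sum_i n_i^2$. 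Under the hypothesis $\ext^1(B,A)>\sum_i n_i^2$ we have $\dim\P(\Ext^1(B,A))\geq\sum_i n_i^2$, so a general orbit is a proper subvariety and two general extension classes lie in distinct orbits, yielding non-isomorphic objects.

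The main subtlety lies in the first part: one must check that any abstract isomorphism between the underlying objects really arises from elements of $\Aut(A)\times\Aut(B)$, rather than some more exotic automorphism mixing $A$ and $B$. This hinges crucially on the hypothesis that $B$ is not a simple factor of $A$, which is exactly what forces $\phi$ to preserve the subobject $A$. Once this is in hand, the dimension count in the second part is routine.
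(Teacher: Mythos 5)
Your first part is correct and is essentially the paper's own argument: the paper deduces $\Hom(E,B)\cong\C$ from $\Hom(A,B)=0$ and $\Hom(B,B)\cong\C$, so that the surjection $E\to B$ is canonical up to scale and its kernel $A$ is carried to the kernel of $E'\to B$ by any isomorphism; your route via the vanishing of the composition $A\to E\to E'\to B$ is the same idea in different clothing. The one place your write-up does not quite close is the dimension count in the second part. You bound the $\Aut(A)$-orbits in $\P(\Ext^1(B,A))$ by $\dim\Aut(A)=\sum_i n_i^2$ and observe $\dim\P(\Ext^1(B,A))\geq\sum_i n_i^2$; but these two estimates are compatible with a dense orbit (e.g.\ when $\ext^1(B,A)=\sum_i n_i^2+1$ both numbers equal $\sum_i n_i^2$), so ``a general orbit is a proper subvariety'' does not follow as stated. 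The repair is an observation you already half-made for $\Aut(B)$: the scalar subgroup $\C^*\cdot\id_A\subseteq\Aut(A)$ also acts on $\Ext^1(B,A)$ by scaling, hence trivially on $\P(\Ext^1(B,A))$, so every orbit there has dimension at most $\sum_i n_i^2-1<\dim\P(\Ext^1(B,A))$. Equivalently, the stabilizer of any nonzero class under $\Aut(A)\times\Aut(B)$ contains the one-parameter subgroup $\{(\lambda\,\id_A,\lambda\,\id_B):\lambda\in\C^*\}$, so orbits in $\Ext^1(B,A)$ itself have dimension at most $\sum_i n_i^2<\ext^1(B,A)$. With that one-line correction your argument is complete; the paper leaves this count implicit.
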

\begin{proof}
Since $\Hom(B,B)\cong \C$ and $\Hom(A,B)=0$, we have $\Hom(E,B)=\C$.  Thus up to scale the maps $E\to B$ and $E'\to B$ are canonically determined by $E,E'$.  Their kernels are therefore identified under the isomorphism $E\to E'$.
\end{proof}

\subsection{Construction of curves}  We now bring together the results of Sections \ref{sec-Gieseker}, \ref{sec-nesting} and \ref{ssec-extensions} to prove our main result.

\begin{theorem}
If $\overline\Delta_{H,D}({\bf v})\gg 0$, then $W$ is the Gieseker wall for $M_{H,D}({\bf v})$.  Furthermore, there are curves in $M_{H,D}({\bf v})$ parameterizing non-isomorphic $(H,D)$-twisted Gieseker stable sheaves that become $S$-equivalent for Bridgeland stability conditions along $W$. 
\end{theorem}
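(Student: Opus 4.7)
The plan is to argue by induction on the rank $r(\mathbf{v})$. The inductive hypothesis, together with Proposition \ref{prop-nesting}, guarantees that the Gieseker walls for both $M_{H,D}(\mathbf{w})$ and $M_{H,D}(\mathbf{u})$ are strictly nested inside $W$ once $\overline\Delta_{H,D}(\mathbf{v}) \gg 0$. The base cases are those where $r(\mathbf{u}) = 0$ (which includes $r(\mathbf{v}) = 1$): in that range $M_{H,D}(\mathbf{u})$ parameterizes line bundles of appropriate degree on a reduced, irreducible curve $C$ (as noted in \S\ref{ssec-quotient}), and one can argue directly using the nesting established in the rank-zero case of Proposition \ref{prop-nesting}.

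With the nesting in hand, the construction is as follows. Pick a stability condition $\sigma_0$ on $W$. Choose a $\mu_{H,D}$-stable $F \in M_{H,D}(\mathbf{w})$ and a (generic, $\mu_{H,D}$-stable when $r(\mathbf{u})>0$) $Q \in M_{H,D}(\mathbf{u})$; by the nesting of Proposition \ref{prop-nesting}, both $F$ and $Q$ are $\sigma_0$-stable, hence simple objects in the abelian subcategory of $\sigma_0$-semistable objects of slope $\nu_{\sigma_0}(\mathbf{v})$. Since $F$ and $Q$ have different ranks (or different $\mu_{H,D}$-slopes when $r(\mathbf{u}) = 0$), they are nonisomorphic. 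For a nonzero extension class $e \in \Ext^1(Q,F)$ I form
\[
0 \to F \to E \to Q \to 0.
\]
Lemma \ref{lem-noHoms} (with $A = F$, $n_F = 1$, $B = Q$) gives $\Hom(E,F) = 0$ whenever $e \neq 0$, and then Lemma \ref{lem-subobjects} forces every subobject of $E$ in $\cA_{\beta}$ of $\sigma_0$-slope $\geq \nu_{\sigma_0}(E)$ to be a subobject of $F$ — hence equal to $0$ or $F$. It follows that $E$ is $\sigma_0$-semistable with Jordan–H\"older factors $F$ and $Q$, and is $\sigma$-stable for $\sigma$ in a small neighborhood immediately above $W$ (on the side of the Gieseker chamber), since destabilizing subobjects would contradict $\sigma_0$-semistability of $E$ by the openness of the Gieseker chamber. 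The large volume argument of \S\ref{sssec-largevolume} then yields that $E$ is $(H,D)$-Gieseker stable.

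To produce a curve, I will let the extension class $e$ vary. A Riemann–Roch estimate shows $\ext^1(Q,F)$ grows without bound as $\overline\Delta_{H,D}(\mathbf{v}) \to \infty$ (while $\hom(Q,F) = \ext^2(Q,F) = 0$ for generic $F,Q$ by stability and Serre duality, since the slopes differ). In particular $\ext^1(Q,F) > \dim \Aut(F) = 1$ once $\overline\Delta_{H,D}(\mathbf{v}) \gg 0$, so Lemma \ref{lem-groupAction} ensures that general pairs of nonproportional extension classes produce nonisomorphic sheaves $E$. A generic pencil in $\Ext^1(Q,F)$, projectivized, thus gives a curve $B \subset \PP\Ext^1(Q,F)$ whose associated family of stable $E$'s is nonconstant; the induced modular map $B \to M_{H,D}(\mathbf{v})$ is nonconstant, producing the desired curve. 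By construction, every member of this family is destabilized along $W$ via the extension sequence and has Jordan–H\"older factors $\{F,Q\}$ at $\sigma_0$, so all its members are $S$-equivalent at $\sigma_0$. This exhibits $W$ as an actual wall, and together with Theorem \ref{thm-GiesekerBound} proves $W$ is the Gieseker wall.

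The main obstacle I anticipate is the verification of $(H,D)$-Gieseker stability of the generic extension $E$: the lemmas of \S\ref{ssec-extensions} are tailored to work in the Bridgeland category $\cA_\beta$ on $W$, so one must be careful that the chain "$\sigma_0$-semistable on $W$ $\Rightarrow$ $\sigma$-stable just off $W$ $\Rightarrow$ Gieseker stable" is rigorous — in particular that the extension $E$ has no competing destabilizing subobject of different character that could appear off $W$. The nesting of Proposition \ref{prop-nesting} is exactly what rules this out, since any hypothetical subobject destabilizing $E$ strictly above $W$ would define a numerical wall larger than $W$, contradicting Theorem \ref{thm-GiesekerBound}. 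Once Gieseker stability is established, the nef divisor assertion (and hence the improvement of Corollary \ref{cor-main}) is automatic from the Positivity Lemma \ref{lem-positivity}: the constructed curve lies in the locus where the Bayer–Macrì divisor associated to $W$ has $S$-equivalent fibers, so that divisor has degree zero on the curve and therefore lies on the boundary of $\Nef(M_{H,D}(\mathbf{v}))$.
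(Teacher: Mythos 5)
Your overall architecture (induction via Proposition \ref{prop-nesting}, extensions $0\to F\to E\to Q\to 0$, the lemmas of \S\ref{ssec-extensions}, varying the extension class to get a curve) matches the paper's, but there are two genuine gaps. First, you take $F\in M_{H,D}({\bf w})$ to be $\mu_{H,D}$-stable. Such an $F$ need not exist: by condition \ref{cond-rankMaximal} of Definition \ref{def-extremal} the extremal character has \emph{maximal} rank, and in the typical case (e.g.\ every example in \S\ref{sec-examples}, where ${\bf w}=\ch(\OO_X^{\oplus 2})$) every semistable sheaf of character ${\bf w}$ is a strictly polystable direct sum. The paper therefore works with a polystable $F=\bigoplus_i F_i^{n_i}$, which is exactly why Lemmas \ref{lem-noHoms} and \ref{lem-groupAction} are stated with multiplicities and why the proof arranges $\chi(Q,F_i)\leq -n_i$ and $\chi(Q,F)<-\sum_i n_i^2$ (forcing $\ext^1(Q,F_i)\geq n_i$ and $\ext^1(Q,F)>\dim\Aut F$ without any Serre-duality vanishing, which your slope argument for $\ext^2(Q,F)=0$ does not actually give when $K_X$ is not suitably signed). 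Your simple-$F$ version only covers the special case $n_i\equiv 1$.

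Second, your passage from ``$\sigma_+$-stable just above $W$'' to ``$(H,D)$-Gieseker stable'' is circular. You rule out a destabilizing wall above $W$ by appealing to Theorem \ref{thm-GiesekerBound}, but that theorem only bounds walls along which a \emph{Gieseker-semistable} sheaf is destabilized --- and Gieseker semistability of $E$ is precisely what is still to be proved. Moreover the criterion of \S\ref{sssec-largevolume} requires knowing beforehand that $E$ is a torsion-free, $\mu_{H,D}$-semistable sheaf. The paper closes this gap with two direct sheaf-theoretic steps that are absent from your argument: (i) $E$ is torsion-free (nontrivial exactly when $r({\bf u})=0$, where $Q$ is a line bundle on a curve; here one uses $\Hom(E,F)=0$ and minimality of $\overline\Delta_{H,D}({\bf w})$ to kill a putative torsion subsheaf), and (ii) $E$ is $\mu_{H,D}$-semistable (any slope-destabilizing stable quotient $C$ has $\mu_{H,D}(C)\leq\mu_{H,D}(F)$ by the definition of ${\bf w}$, and both cases of equality and strict inequality lead to a contradiction via $\Hom(E,F)=0$ and minimality of the discriminants of the $F_i$). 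Only after (i) and (ii) does the large-volume criterion combined with $\sigma_+$-stability yield $(H,D)$-Gieseker stability. You correctly identified this as the main obstacle, but the resolution you propose does not work; the discriminant-minimality of the stable factors of $F$ is the missing ingredient.
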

\begin{proof}
Let $\sigma_0 = (Z_0,\cA_0)$ be a stability condition on $W$.  Choose a polystable sheaf $$F = \bigoplus_i F_i^{n_i}\in M_{H,D}({\bf w}).$$  Since $W$ can be made arbitrarily large by increasing $\overline\Delta_{H,D}({\bf v})$, we may assume that every stable factor of $F$ is $\sigma_0$-stable.  By Theorem \ref{thm-OGrady}, we may increase $\overline\Delta_{H,D}({\bf v})$ so that there are stable sheaves in $M_{H,D}({\bf u})$.  We let $Q$ be such a stable sheaf (if ${\bf u}$ has rank $0$, we additionally assume $Q$ is sufficiently nice; see Step 2 below).  Increasing $\overline\Delta_{H,D}({\bf v})$, Proposition \ref{prop-nesting} shows that $Q$ is actually $\sigma_0$-stable.  It is clear that $Q$ is not one of the stable factors of $F$.
Increasing $\overline\Delta_{H,D}({\bf v})$ decreases the Euler characteristics $\chi({\bf u},F_i)$.
Thus, if $\overline\Delta_{H,D}({\bf v})$ is sufficiently large we will have $\chi(Q,F_i) \leq -n_i$ and $\chi(Q,F) < - \sum_i n_i^2$.

Let $\cP\subset \cA_0$ be the full subcategory of $\sigma_0$-semistable objects with the same $\sigma_0$-slope as $F$ and $Q$.  Then $F$ is a semisimple object of $\cP$ and $Q$ is a simple object of $\cP$.  If $E$ is a general extension of the form $$0\to F\to E \to Q\to 0,$$ then by Lemma \ref{lem-noHoms} we have $\Hom(E,F)=0$.  Furthermore, by Lemma \ref{lem-groupAction} we can find curves in $\Ext^1(Q,F)$ such that two general parameterized objects $E$ are nonisomorphic.  To complete the proof, we prove that $E$ is $(H,D)$-Gieseker stable.  

\emph{Step 1: if $\sigma_+$ is a stability condition just above $W$, then $E$ is $\sigma_+$-stable}.  Suppose $F'\subset E$ destabilizes $E$ with respect to $\sigma_+$, so $\mu_{\sigma_+}(F')\geq \mu_{\sigma_+}(E)$.  Since $E$ is $\sigma_0$-semistable, we have $\mu_{\sigma_0}(F') = \mu_{\sigma_0}(E)$, and thus by Lemma \ref{lem-subobjects} $F'$ is a subobject of $F$ in $\cP$.  But then $\mu_{\sigma_+}(F') = \mu_{\sigma_+}(F)<\mu_{\sigma_+}(E)$, a contradiction.

\emph{Step 2: $E$ is torsion-free.}  If ${\bf u}$ has positive rank this is trivial, so assume that $r({\bf u})=0$.  By the discussion in \S\ref{ssec-quotient}, we may assume $Q$ is a line bundle $L$ supported on a reduced and irreducible curve $C$.  Suppose $E$ has a nonzero torsion subsheaf $T$, and let $E' = E/T$.  Since $F$ is torsion-free, $T$ must be a subsheaf of $Q$.  Since $Q$ has pure dimension $1$, $T$ must be another line bundle $L'$ supported on $C$.  Then $c_1(T) = c_1({\bf u})$, so $$c_1(E') = c_1({\bf v}) - c_1({\bf u}) = c_1({\bf w}).$$ Furthermore, the composition $F\to E\to E'$ is injective.  Since the stable factors of $F$ have minimal discriminant, this is only possible if $F\to E'$ is an isomorphism.  But then the composition $E\to E'\to F$ with the inverse isomorphism gives a nontrivial homomorphism $E\to F$, which is a contradiction.  Therefore $E$ is torsion-free.

\emph{Step 3: $E$ is $\mu_{H,D}$-semistable.} Suppose that $E\to C$ is a $\mu_{H,D}$-stable quotient of $E$ with $\mu_{H,D}(C) < \mu_{H,D}(E)$ and $r(C)<r(E)$.  By the definition of ${\bf w}$, we have $\mu_{H,D}(C)\leq \mu_{H,D}(F)$.  If $\mu_{H,D}(C)<\mu_{H,D}(F)$, then the composition $F\to E \to C$ is $0$, so induces a map $Q\to C$ which must be $0$ since $\mu_{H,D}(C)<\mu_{H,D}(Q)$; thus $C$ is zero, a contradiction.  If instead $\mu_{H,D}(C) = \mu_{H,D}(F)$, since the stable factors of $F$ have minimal discriminant the composition $F\to E\to C$ is either $0$ or identifies $C$ with one of the stable factors $F_i$ of $F$.  In the first case we conclude as before, and in the second we obtain a nontrivial homomorphism $E\to F$, which again is a contradiction.  Thus $E$ is $\mu_{H,D}$-semistable.
 
Finally, since $E$ is $\mu_{H,D}$-semistable and $\sigma_+$-stable, it is $(H,D)$-Gieseker stable by the discussion in \S\ref{sssec-largevolume}.  This completes the proof. 
\end{proof}

\section{Examples}\label{sec-examples}

In this section, we give applications and examples of our general theory. We discuss the nef cones of certain moduli spaces of vector bundles of rank $2$ on several classes of  surfaces.  Let $(X,H)$ be a polarized surface and consider  the vector ${\bf v}$ with $r({\bf v}) = 2$, fixed $\ch_1({\bf v})$, and variable $\ch_2({\bf v})\ll 0$, so that $\overline \Delta_{H,D}({\bf v}) \gg 0$.  In the cases we consider, $\mu_H$-semistability and $\mu_H$-stability will coincide, so the moduli space $M_H({\bf v})$ carries a quasiuniversal family.  Additionally, we will have $q(X) = 0$.  

Under these assumptions, by O'Grady's theorem \cite{OGrady} the moduli space $M_H({\bf v})$ is irreducible.  Writing ${\bf v}^\perp$ for the orthogonal complement of ${\bf v}$ in $K_{\num}(X)$ with respect to the Euler pairing $({\bf v},{\bf w}) = \chi({\bf v}\te {\bf w})$, the Donaldson homomorphism $$\lambda : {\bf v}^\perp \to N^1(M_H({\bf v}))$$ is an isomorphism by a theorem of Jun Li \cite{JunLi}.  Thus we can specify divisor classes by giving elements of ${\bf v}^\perp$.  By \cite[Proposition 3.8]{Betal}, if $\sigma$ is a stability condition on a wall $W$ in the $(H,D)$-slice of stability conditions, then the Bayer-Macr\`i divisor class associated to $\sigma$ corresponds to  a multiple of the class $$(-1,s_WH+D,m)\in {\bf v}^\perp,$$ where the second Chern character $m$ is determined by the requirement $(-1,s_WH+D,m)\in {\bf v}^\perp$.  

Additionally, when $\Pic(X) \cong \ZZ H$, one extremal ray of the nef cone corresponds to the Jun Li morphism to the Donaldson-Uhlenbeck-Yau compactification \cite[\S 8]{HuybrechtsLehn} and is given by $\lambda(0, H, n)$, where $n$ is chosen by the requirement that $(0, H, n) \in {\bf v}^{\perp}$. Hence, we only need to compute the other extremal ray.

\subsection{Surfaces in $\P^3$}
Let $X$ be a very general surface of degree $d \geq 4$ in $\P^3$. By the Noether-Lefschetz theorem, $\Pic(X) \cong \ZZ H$, where $H$ is the class of a hyperplane section of $X$. Let ${\bf v}$ be the Chern character with $r({\bf v})=2$, $c_1({\bf v}) = H$ and variable second Chern character $\ch_2({\bf v})$. The reduced slope is $\tilde{\mu}_H({\bf v})=\frac{1}{2}$. Since there are no line bundles with this reduced slope, every $\mu_H$-semistable sheaf of character ${\bf v}$ is $\mu_H$-stable.  We use the twisting divisor $D=0$, and omit it from our notation.

The extremal  character $\bf{w}$ must have reduced slope $\tilde{\mu}_H({\bf w})=0$ and rank at most $2$. The line bundle $\OO_X$ has this slope.  By the Bogomolov inequality, we conclude that the extremal character ${\bf w}$ is the character of $\OO_X \oplus \OO_X$. If $\Delta_H ({\bf v})$ is sufficiently large, then the Gieseker wall is given by $W({\bf v}, {\bf w})$.  A computation shows that the wall $W(\bf{v}, \bf{w})$ has center
$$s_W= - \frac{K_X \cdot H}{2H^2} + \frac{\ch_2({\bf v})}{H^2} = - \frac{d-4}{2} + \frac{\ch_2({\bf v})}{d}.$$  The interesting extremal ray of the nef cone corresponds to the class $(-1,  s_W H, m)\in {\bf v}^\perp$. 

\subsection{Double covers of $\PP^2$}
Let $X$ be the cyclic double cover of $\P^2$ branched along a very general curve of degree $2d \geq 6$. Let $H$ be the pullback of $\OO_{\P^2}(1)$. By the Noether-Lefschetz theorem (see for example \cite{RavindraSrinivas}), $\Pic(X) \cong \ZZ H$. Let ${\bf v}$ be the Chern character with $r({\bf v})=2$, $c_1({\bf v}) = H$ and variable second Chern character $\ch_2$. The reduced slope is $\tilde{\mu}_H({\bf v})=\frac{1}{2}$. Since there are no line bundles with this reduced slope, every $\mu_H$-semistable sheaf of character ${\bf v}$ is $\mu_H$-stable.  We again use the twisting divisor $D=0$.  

The extremal  character $\bf{w}$ must have reduced slope $\tilde{\mu}_H({\bf w})=0$ and rank at most $2$. The line bundle $\OO_X$ has this slope.  By the Bogomolov inequality, we conclude that the extremal character is the character of $\OO_X \oplus \OO_X$. If $\Delta_H ({\bf v})$ is sufficiently large, then the Gieseker wall is given by $W({\bf v}, {\bf w})$.  A computation shows that the wall $W(\bf{v}, \bf{w})$ has center
$$s_W= - \frac{K_X \cdot H}{2H^2} + \frac{\ch_2({\bf v})}{H^2} = - \frac{d-3}{2} + \frac{\ch_2}{2}.$$ One edge of the nef cone corresponds to a class $$(-1,s_W, H,m)\in {\bf v}^\perp,$$ and the other edge corresponds to the Donaldson-Uhlenbeck-Yau compactification.

\subsection{The quadric $\P^1\times \P^1$} 
Let $X = \P^1\times \P^1$.  We write classes in $N^1(X)_\Q \cong \Q H_1\oplus \Q H_2$ as $(a,b)$, where $H_1$ and $H_2$ are the two fibers on $X$.  Fix the polarization $H=(1,1)$, and define a family $D_t = (t,-t)$ of twisting divisors orthogonal to $H$.  We consider the vector ${\bf v}$ with $r({\bf v})=2$, $c_1({\bf v}) = (1,0)$, and variable $\ch_2({\bf v})\ll 0$.  Since we vary the twisting divisor in this section, it is preferable to view $\ch_2$ as varying instead of the twisted discriminant as varying, since the latter depends on the particular twist.

Observe that every $\mu_H$-semistable sheaf of character ${\bf v}$ is $\mu_H$-stable, since $\tilde \mu_H({\bf v}) = \frac{1}{2}$ and no line bundle has this reduced slope.  Therefore, the twisted moduli spaces $M_{H,D_t}({\bf v})$ parameterize the same objects as the ordinary Gieseker space $M_H({\bf v})$, and $M_H({\bf v})$ can be realized as the large-volume limit in the $(H,D_t)$-slice for any $t\in \Q$.  For any $t\in \Q$, Corollary \ref{cor-main} allows us to determine a boundary nef divisor on $M_H({\bf v})$ so long as $\ch_2({\bf v})\ll 0$; however, the required bound on $\ch_2({\bf v})$ depends on the particular time $t$.  As $\ch_2({\bf v})$ becomes more negative, we will find that the structure of the nef cone becomes increasingly complicated; for example, while any fixed space $M_H({\bf v})$ is a Mori dream space and thus has a finite polyhedral nef cone \cite{Ryan}, the number of extremal rays of this cone becomes arbitrarily large as $\ch_2({\bf v})$ decreases. 

Let ${\bf w}_t$ denote an extremal character for ${\bf v}$ in the $(H,D_t)$-slice.  In the next result we compute ${\bf w}_t$ in terms of $t$.

\begin{proposition}
Let $t\in \Q$ and let $n\in \Z$ be an integer closest to $t$.  
\begin{enumerate}
\item If $n\neq 0$, then the character of $\OO_X(n,-n)$ is an extremal character for ${\bf v}$ in the $(H,D_t)$-slice.
\item If $n=0$, then the character of $\OO_X^{\oplus 2}$ is an extremal character for ${\bf v}$ in the $(H,D_t)$-slice.
\end{enumerate}
Any extremal character for ${\bf v}$ in the $(H,D_t)$-slice is obtained in this way.  In particular, the extremal character is uniquely determined  if $t$ is not a half-integer, and there are two choices of extremal character if $t$ is a half-integer.\end{proposition}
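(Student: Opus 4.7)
The plan is to turn the defining conditions (E1)--(E5) of Definition \ref{def-extremal} into an explicit finite numerical minimization on $X = \PP^1 \times \PP^1$ and then complete a short case analysis.

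First I would pin down $\tilde\mu_H(\mathbf{w})$. Since $H \cdot H_1 = 1$ generates $H\cdot\Pic(X) \subset \ZZ$, we have $e = 1$ and $\tilde\mu_H(\mathbf{v}) = 1/2$. With $r(\mathbf{v}) = 2$ and $\tilde\mu_H(\mathbf{v}) \notin \ZZ$, Remark \ref{rem-Farey} identifies $\tilde\mu_H(\mathbf{w})$ with the element of $F_2$ immediately preceding $1/2$, which is $0$. Hence $H \cdot c_1(\mathbf{w}) = 0$, so $\Pic(X) = \ZZ H_1 \oplus \ZZ H_2$ forces $c_1(\mathbf{w}) = aH_1 - aH_2$ for some $a \in \ZZ$. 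The rank bound in (E1) gives $r(\mathbf{w}) \in \{1, 2\}$, and in the rank-$2$ case the effectiveness of $c_1(\mathbf{v}) - c_1(\mathbf{w}) = (1-a, a)$ restricts $a$ to $\{0,1\}$.

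Next I would identify the minimum-discriminant character in each admissible $(r, c_1)$ class. Every rank-$1$ torsion-free sheaf with $c_1 = (a, -a)$ has the form $\OO_X(a,-a) \otimes I_Z$, so the line bundle is optimal. For rank $2$, the Jordan--H\"older factors of a slope-zero $\mu_H$-semistable sheaf are slope-zero line bundles, which on $X$ are precisely the $\OO_X(b, -b)$; the largest $\mathrm{ch}_2$ compatible with the given $c_1$ is realized by $\OO_X^{\oplus 2}$ for $c_1 = (0,0)$ and $\OO_X(1,-1) \oplus \OO_X$ for $c_1 = (1,-1)$. A direct expansion along $D_t + \tfrac{1}{2}K_X = (t-1) H_1 + (-t-1) H_2$ then yields
\begin{align*}
\overline\Delta_{H,D_t}(\OO_X(a,-a)) &= \tfrac{1}{2}(a-t)^2, \\
\overline\Delta_{H,D_t}(\OO_X^{\oplus 2}) &= \tfrac{1}{2} t^2, \\
\overline\Delta_{H,D_t}(\OO_X(1,-1) \oplus \OO_X) &= \tfrac{1}{4} \bigl(t^2 + (1-t)^2\bigr).
\end{align*}
The key observations are that the second discriminant coincides with the first at $a = 0$, and the third is the arithmetic mean of the first at $a = 0$ and $a = 1$.

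Finally I would complete the case analysis. Let $n \in \ZZ$ be an integer closest to $t$; the rank-$1$ discriminant is minimized at $a = n$. If $n \neq 0$, then $\tfrac{1}{2}(n-t)^2 < \tfrac{1}{2} t^2$, and for $t$ not a half-integer the arithmetic-mean identity also gives $\tfrac{1}{2}(n-t)^2 < \tfrac{1}{4}(t^2 + (1-t)^2)$, so $\OO_X(n,-n)$ is the unique extremal character. If $n = 0$, then $\OO_X$ and $\OO_X^{\oplus 2}$ tie at the minimum and (E5) selects the rank-$2$ class $\OO_X^{\oplus 2}$. For half-integer $t$ both closest integers produce admissible extremal characters of the stated form, giving the two choices. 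The main subtle point is the rank-$2$ candidate with $c_1 = (1,-1)$: for $t \neq 1/2$ its discriminant strictly exceeds the rank-$1$ minimum, so it is excluded by (E4); at the exceptional value $t = 1/2$ its discriminant ties, but a K\"unneth computation shows $\Ext^1(\OO_X(1,-1), \OO_X) = 0 = \Ext^1(\OO_X, \OO_X(1,-1))$, so the only $\mu_H$-semistable representative of the character $(2, (1,-1), -1)$ is the polystable sum $\OO_X(1,-1) \oplus \OO_X$, whose rank-$1$ Jordan--H\"older factor $\OO_X(1,-1)$ already appears in the proposition's list and defines the same Bridgeland wall $W(\mathbf{w}, \mathbf{v})$ by the remark following Theorem \ref{thm-main}.
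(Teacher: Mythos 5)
Your overall strategy --- fixing $\tilde\mu_H(\mathbf{w})=0$ via the Farey remark, enumerating the admissible pairs $(r(\mathbf{w}),c_1(\mathbf{w}))$ using \ref{cond-rankBound}, and minimizing the twisted discriminant in each class --- is exactly the paper's, and your discriminant formulas are correct. The one place you genuinely depart from the paper is the exclusion of the rank-two class with $c_1(\mathbf{w})=(1,-1)$: the paper invokes Rudakov's classification to assert that a semistable sheaf with these invariants has $\Delta\geq\tfrac{3}{4}$, whence $\overline\Delta_{H,D_t}\geq\tfrac{3}{8}>\tfrac{1}{8}$ uniformly in $t$, while you use only the bound $\ch_2\leq-1$ (i.e.\ $\Delta\geq\tfrac{1}{4}$), which yields $\overline\Delta_{H,D_t}\geq\tfrac{1}{4}\bigl(t^2+(1-t)^2\bigr)$ and hence only a non-strict comparison at $t=\tfrac{1}{2}$. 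Two problems result. First, your justification of $\ch_2\leq-1$ --- ``the Jordan--H\"older factors of a slope-zero $\mu_H$-semistable sheaf are slope-zero line bundles'' --- is not correct as stated: a $\mu_H$-stable rank-two sheaf admits no such factorization, and rank-one slope-zero factors are twists of ideal sheaves rather than line bundles. The bound itself is fine, but it should come from the Bogomolov inequality ($\Delta\geq 0$ forces $\ch_2\leq-\tfrac{1}{2}$, hence $\ch_2\leq-1$ by integrality), which is available because \ref{cond-stable} supplies a semistable representative.

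Second, and more seriously, your treatment of the tie at $t=\tfrac{1}{2}$ does not prove the statement. Extremality in Definition \ref{def-extremal} is a purely numerical condition on the character; showing that the only semistable sheaf of character $(2,(1,-1),-1)$ is $\OO_X\oplus\OO_X(1,-1)$, and that this character ``defines the same wall,'' says nothing about whether it satisfies \ref{cond-rankBound}--\ref{cond-rankMaximal}. In fact at $t=\tfrac{1}{2}$ it does: the direct sum is $(H,D_{1/2})$-twisted Gieseker polystable (both summands have the same twisted reduced Hilbert polynomial there), so \ref{cond-stable} holds, its discriminant $\tfrac{1}{8}$ ties the minimum, and its rank is maximal. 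Your argument therefore leaves the clause ``any extremal character is obtained in this way'' unestablished at $t=\tfrac{1}{2}$; indeed your own computation shows that closing this case requires either Rudakov's stronger input or a caveat in the statement. (There is a real borderline subtlety here, since Rudakov's bound as cited concerns untwisted semistability; it is harmless downstream because all the tied characters share the same slope and discriminant and so cut out the same wall $W_t$, as the paper notes after the proposition. But as a proof of the proposition as literally stated, your argument has a gap precisely where it diverges from the paper's.)
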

\begin{proof}
Let ${\bf w}_t$ denote an extremal character for ${\bf v}$ in the $(H,D_t)$-slice.  Since $\tilde \mu_H({\bf w}_t)=0$, the first Chern class satisfies $c_1({\bf w}_t) = (n,-n)$ for some $n\in \Z$.  If ${\bf w}_t$ has rank $2$, then $$c_1({\bf v}) - c_1({\bf w}_t) = (1,0)-(n,-n) = (1-n,n)$$ is effective, so either $n=0$ or $n=1$.  In all other cases, ${\bf w}_t$ has rank $1$.

Next we minimize the discriminant of ${\bf w}_t$, viewing $c_1({\bf w}_t) = (n,-n)$ and $r({\bf w}_t)$ as fixed.  First recall the ordinary discriminant $\Delta$ and twisted discriminant $\overline\Delta_{H,D_t}$ are $$\Delta = \frac{1}{2} \frac{c_1^2}{r^2} - \frac{\ch_2}{r} \qquad\mbox{and}\qquad \overline\Delta_{H,D_t} = \frac{1}{2}\overline \mu_{H,D_t}^2 - \frac{\overline\ch_2^{D_t}}{H^2 r}.$$ When the formula for $\overline \Delta_{H,D_t}$ is fully expanded using the definitions, there are several terms; however, only the term $-\ch_2/(H^2r)$ varies when $c_1$ and $r$ are held fixed and $\ch_2$ is varied.  Thus, the problems of minimizing $\Delta$ and $\overline \Delta_{H,D_t}$ are equivalent.  If the class $c_1({\bf w}_t)/r({\bf w}_t)$ is integral, then $\Delta({\bf w}_t)$ must be zero and ${\bf w}_t$ is a direct sum of $r({\bf w}_t)$ copies of a line bundle.

The only remaining possibility is that $r({\bf w}_t) = 2$ and $n=1$, so that $c_1({\bf w}_t) = (1,-1)$. By Rudakov's classification of the numerical invariants of semistable sheaves \cite{Rudakov}, the smallest discriminant of a semistable sheaf with this rank and first Chern class is $\Delta({\bf w}_t) = \frac{3}{4}$. Thus ${\bf w}_t$ will be the character ${\bf y} = (r,c_1,\Delta) = (2,(1,-1),\frac{3}{4})$. However, we claim that for each $t\in \Q$, $$\overline \Delta_{H,D_t}({\bf y})> \min_{n\in \Z} \overline\Delta_{H,D_t}(\OO_X(n,-n)),$$ so that $\bf y$ is never an extremal character for ${\bf v}$ in the $(H,D_t)$-slice.  To this end, a  computation shows \begin{equation}\label{LineBundleDisc}\tag{$\ast$}\overline \Delta_{H,D_t}(\OO_X(n,-n))= \frac{(n-t)^2}{2}\end{equation} while $$
\overline \Delta_{H,D_t}({\bf y})= \frac{1}{2}+\frac{(t-1)t}{2}.$$
Therefore $\overline \Delta_{H,D_t}({\bf y})\geq \frac{3}{8}$ for all $t$.  On the other hand, if $n$ is an integer closest to $t$, then $\overline \Delta_{H,D_t}(\OO_X(n,-n)) \leq \frac{1}{8}$. 

The remaining statements of the proposition now follow at once from our explicit formula (\ref{LineBundleDisc}) for $\overline \Delta_{H,D_t}(\OO_X(n,-n))$; for fixed $t$ and variable $n$, the discriminant of $\OO_X(n,-n)$ is smallest when $n$ is an integer closest to $t$.   
\end{proof}

Having computed the extremal character ${\bf w}_t$, Corollary \ref{cor-main} determines a boundary nef divisor on $M_H({\bf v})$ corresponding to the $(H,D_t)$-slice, provided that $\overline\Delta_{H,D_t} ({\bf v})$ is sufficiently large (depending on $t$).  Let $W_t$ be the wall in the $(H,D_t)$-slice determined by ${\bf w}_t$, and suppose its center is at the point $(s_{W_t},0)$.  Let ${\bf u}_t$ be the Chern character $${\bf u}_t = ( -1 , s_{W_t}H+D_t,m)\in {\bf v}^\perp,$$ where the number $m$ is determined by the requirement ${\bf u}_t\in {\bf v}^\perp$.   (Note that while ${\bf w}_t$ is technically not well-defined if $t$ is a half-integer, the wall $W_t$ and hence the character ${\bf u}_t$ are independent of the choice.)  Then $\lambda({\bf u}_t)$ spans the ray determined by the wall $W_t$ in the $(H,D_t)$-slice.  

We reiterate that for any given $t\in \Q$, the ray spanned by $\lambda({\bf u}_t)$ will be a boundary nef ray if $\ch_2({\bf v})\ll 0$ depending on $t$.  We compute the center $s_{W_t}$ to more explicitly determine ${\bf u}_t$ in the next result.

\begin{lemma}
If $n$ is a closest integer to $t\in \Q$, then we have $$s_{W_t} = (1-4n)t+2n^2+1+\ch_2({\bf v})$$
\end{lemma}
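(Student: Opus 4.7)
The plan is a direct computation using the wall-center formula
\[
s_W = \tfrac{1}{2}\bigl(\overline\mu_{H,D_t}({\bf v}) + \overline\mu_{H,D_t}({\bf w}_t)\bigr) - \frac{\overline\Delta_{H,D_t}({\bf v}) - \overline\Delta_{H,D_t}({\bf w}_t)}{\overline\mu_{H,D_t}({\bf v}) - \overline\mu_{H,D_t}({\bf w}_t)}
\]
recalled in Section~\ref{ssec-Bridgeland}, applied to the invariants ${\bf v} = (2,(1,0),\ch_2({\bf v}))$ and the extremal character ${\bf w}_t$ identified in the preceding proposition. Because the center depends only on the two rank-normalized quantities $\overline\mu_{H,D_t}$ and $\overline\Delta_{H,D_t}$, the rank-$2$ representative $\OO_X^{\oplus 2}$ used when $n=0$ and the rank-$1$ representative $\OO_X(n,-n)$ used when $n\neq 0$ can be treated uniformly.

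First, I compute the twisted slopes. Setting $\tilde D := D_t + \tfrac{1}{2}K_X = (t-1,-t-1)$, the intersection numbers $H\cdot \tilde D = -2$, $H^2=2$, $H\cdot c_1({\bf v}) = 1$, and $H\cdot c_1(\OO_X(n,-n)) = 0$ give $\mu_H({\bf v}) = 1/4$ and $\mu_H(\OO_X(n,-n)) = 0$, hence
\[
\overline\mu_{H,D_t}({\bf v}) = 5/4, \qquad \overline\mu_{H,D_t}({\bf w}_t) = 1,
\]
independently of $n$ and $\ch_2({\bf v})$. So the denominator in the wall formula is $1/4$ and the half-sum of the two slopes is $9/8$.

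Next, I compute the twisted discriminants. For ${\bf w}_t$ the identity $(\ast)$ in the text already supplies $\overline\Delta_{H,D_t}({\bf w}_t) = (n-t)^2/2$; the value at $n=0$ follows from a parallel computation with $\OO_X^{\oplus 2}$, which yields the same $t^2/2$. For ${\bf v}$ I substitute $\tilde D^2 = -2(t^2-1)$ and $\tilde D\cdot c_1({\bf v}) = -t-1$ into $\overline\ch_2^{D_t} = \ch_2 - \tilde D\cdot c_1 + \tfrac{1}{2}\tilde D^2\, r$ to obtain $\overline\ch_2^{D_t}({\bf v}) = \ch_2({\bf v}) - 2t^2 + t + 3$, and therefore
\[
\overline\Delta_{H,D_t}({\bf v}) = \tfrac{25}{32} - \tfrac{1}{4}\bigl(\ch_2({\bf v}) - 2t^2 + t + 3\bigr).
\]

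Substituting everything into the wall-center formula reduces it to $s_{W_t} = \tfrac{9}{8} - 4\bigl(\overline\Delta_{H,D_t}({\bf v}) - (n-t)^2/2\bigr)$, and expanding and collecting the $t^2$ cancellation collapses this to $(1-4n)t + 2n^2 + 1 + \ch_2({\bf v})$. The argument is pure bookkeeping and contains no real obstacle; the only point worth flagging is that the rank-$1$ choice at $n\ne 0$ and the rank-$2$ choice at $n=0$ give the same wall, as is forced by the fact that $\overline\mu_{H,D_t}$ and $\overline\Delta_{H,D_t}$ (and therefore the wall they determine) are rank-normalized.
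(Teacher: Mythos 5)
Your proposal is correct and follows essentially the same route as the paper: compute $\overline\mu_{H,D_t}$ and $\overline\Delta_{H,D_t}$ for ${\bf v}$ and ${\bf w}_t$ and substitute into the wall-center formula, with the same (correct) observation that the rank-$2$ representative at $n=0$ gives the same rank-normalized invariants and hence the same wall. All of your intermediate values ($\overline\mu_{H,D_t}({\bf v})=\tfrac54$, $\overline\mu_{H,D_t}({\bf w}_t)=1$, and your expression for $\overline\Delta_{H,D_t}({\bf v})$, which agrees with the paper's $\tfrac12(t-\tfrac14)^2-\tfrac{\ch_2({\bf v})}{4}$) check out, and the final simplification is right.
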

\begin{proof}
Without loss of generality assume ${\bf w}_t = \ch(\OO_X(n,-n))$.  Then \begin{align*}\overline\mu_{H,D_t}({\bf w}_t) &= 1\\
\overline\mu_{H,D_t}({\bf v}) &= \frac{5}{4}\\
\overline\Delta_{H,D_t}({\bf w}_t) &= \frac{1}{2}(n-t)^2,\\
\overline\Delta_{H,D_t}({\bf v}) &= \frac{1}{2}\left(t-\frac{1}{4}\right)^2 -\frac{\ch_2({\bf v})}{4}
\end{align*} and the result follows at once from the formula for the center.
\end{proof}

As $t$ varies in a unit-length interval $(n-\frac{1}{2},n+\frac{1}{2})$ for some integer $n$, the centers $s_{W_t}$ interpolate linearly between the centers at  half-integer times.  Thus, if $\ch_2({\bf v})$ is sufficiently negative that the divisors $$\lambda({\bf u}_{n-\frac{1}{2}}), \lambda({\bf u}_n), \mbox{ and }\lambda({\bf u}_{n+\frac{1}{2}})$$ are all boundary nef divisors, then $\lambda({\bf u}_t)$ is a boundary nef divisor for \emph{every} $t$ in this interval.  Similarly, if the divisors $$\lambda({\bf u}_{n-\frac{1}{2}}), \lambda({\bf u}_n), \lambda({\bf u}_{n+\frac{1}{2}}), \lambda({\bf u}_{n+1})\mbox{ and } \lambda({\bf u}_{n+\frac{3}{2}})$$ are all boundary nef divisors, then $\lambda({\bf u}_t)$ is a boundary nef divisor for all $t\in (n-\frac{1}{2},n+\frac{3}{2})$.  Furthermore, in this case $\lambda({\bf u}_{n+\frac{1}{2}})$ spans an extremal ray of the nef cone.  By further decreasing $\ch_2({\bf v})$ in this manner, we get the next result.

\begin{proposition}
Let $N>0$ be a positive integer, and suppose $\ch_2({\bf v})$ is sufficiently negative, depending on $N$.  If $t\in [-N,N]$, then $\lambda({\bf u}_t)$ is a boundary nef divisor on $M_H({\bf v})$.  If additionally $t$ is a half-integer, then $\lambda({\bf u}_t)$ spans an extremal ray of the nef cone of $M_H({\bf v})$.  In particular, as $\ch_2({\bf v})$ decreases, the number of extremal rays of $\Nef(M_H({\bf v}))$ becomes arbitrarily large.  
\end{proposition}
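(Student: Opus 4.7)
My plan is to combine Corollary~\ref{cor-main}, applied at the finite set of integer and half-integer times in $[-N,N]$, with a piecewise-linear interpolation argument that propagates boundary nefness to all of $[-N,N]$ and then extracts extremal rays at the half-integer corners of the path $t\mapsto\lambda({\bf u}_t)$.

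First, I would note that the set $T=\frac{1}{2}\ZZ\cap[-N,N]$ is finite and apply Corollary~\ref{cor-main} at each $t\in T$, obtaining finitely many bounds on $\overline\Delta_{H,D_t}({\bf v})$. Since $r({\bf v})$ and $c_1({\bf v})$ are fixed, these translate into finitely many lower bounds on $-\ch_2({\bf v})$, and choosing $\ch_2({\bf v})$ negative enough to clear all of them makes $\lambda({\bf u}_t)$ boundary nef for every $t\in T$ simultaneously. Next, I would use the preceding classification of extremal characters to observe that ${\bf w}_t$ is constant on each open interval $(n-\frac{1}{2},n+\frac{1}{2})$; since $\mu_H$-stability is insensitive to the twisting divisor, the moduli spaces $M_H({\bf w}_t)$ and $M_H({\bf v}-{\bf w}_t)$ are likewise constant there. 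The construction from Section~\ref{sec-curves} then produces a single family of destabilizing curves $C\subset M_H({\bf v})$ witnessing $S$-equivalence along $W_t$ for every $t$ in the open interval, and the Positivity Lemma yields $\lambda({\bf u}_t)\cdot C=0$ there. Since $\lambda({\bf u}_t)$ is linear in $t$ on the interval by the explicit formula $s_{W_t}=(1-4n)t+2n^2+1+\ch_2({\bf v})$, this orthogonality extends to the closed interval; combined with nefness at the three special values $t=n-\frac{1}{2},n,n+\frac{1}{2}$ and convexity of the nef cone, every intermediate $\lambda({\bf u}_t)$ is nef, and orthogonality to the effective class $C$ promotes it to boundary nef.

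For extremality at a half-integer $t=n+\frac{1}{2}$, I would exploit that both choices of extremal character are valid there, yielding two families of destabilizing curves $C^{(1)},C^{(2)}\subset M_H({\bf v})$ each orthogonal to $\lambda({\bf u}_{n+\frac{1}{2}})$. The key claim is that $C^{(1)}$ and $C^{(2)}$ are linearly independent in $N_1(M_H({\bf v}))$. To prove this, I would observe that each adjacent segment of $t\mapsto\lambda({\bf u}_t)$ lies in the corresponding $(C^{(i)})^\perp$, but the tangent directions $v_1,v_2$ of the two segments at the corner are distinct (the slope of $s_{W_t}$ jumps from $1-4n$ to $-3-4n$), and a direct rank-and-$c_1$ computation shows that $\lambda({\bf u}_{n+\frac{1}{2}}),v_1,v_2$ span ${\bf v}^\perp$. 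No common hyperplane can therefore contain both segments, forcing $(C^{(1)})^\perp\neq(C^{(2)})^\perp$. Since $N^1(M_H({\bf v}))\cong{\bf v}^\perp$ is three-dimensional by Jun Li's theorem applied to $K_{\num}(\P^1\times\P^1)$ of rank four with $q(X)=0$, the codimension-two subspace $(C^{(1)})^\perp\cap(C^{(2)})^\perp$ meets the nef cone in at most a ray, which must be extremal and contains $\lambda({\bf u}_{n+\frac{1}{2}})$. The $2N$ distinct half-integers in $[-N,N]$ yield $2N$ distinct extremal rays, giving the final assertion.

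The main obstacle I foresee is the linear independence of $C^{(1)}$ and $C^{(2)}$ in the extremality step. While the velocity computation is elementary, making it rigorous requires carefully tracking how the orthogonality of each destabilizing curve to a Bayer-Macr\`i class depends on the corresponding sub-object's Chern character, via the Positivity Lemma and the explicit description of Bayer-Macr\`i classes as $(-1,s_WH+D,m)\in{\bf v}^\perp$. An abstract formulation avoiding coordinate computation would be preferable, but the concrete formulas developed in this section make the required check manageable.
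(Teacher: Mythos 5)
Your proposal is correct and follows essentially the same route as the paper: the paper's argument is exactly the finite application of Corollary~\ref{cor-main} at integer and half-integer times, linearity of $t\mapsto\lambda({\bf u}_t)$ on each interval $(n-\frac{1}{2},n+\frac{1}{2})$, convexity of the nef cone to fill in intermediate $t$, and the change of direction at half-integers to force extremality. Your extremality step is just the dual formulation of the paper's ``corner in the boundary path'' observation --- you exhibit the two independent supporting hyperplanes $(C^{(1)})^\perp$ and $(C^{(2)})^\perp$ explicitly rather than arguing that a kink in a path contained in the boundary of a $3$-dimensional cone cannot occur in the relative interior of a $2$-dimensional face --- and if anything it is the more carefully justified of the two.
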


In a sense, the ``middle'' portion of the nef cone of $M_H({\bf v})$ stabilizes as $\ch_2({\bf v})$ decreases.

\bibliographystyle{plain}

\end{document}